 \DeclareFontFamily{U}{wncy}{}
    \DeclareFontShape{U}{wncy}{m}{n}{<->wncyr10}{}
    \DeclareSymbolFont{mcy}{U}{wncy}{m}{n}
    \DeclareMathSymbol{\Sha}{\mathord}{mcy}{"58} 
\newtheorem{theorem}{Theorem}[section]
\newtheorem{corollary}[theorem]{Corollary}
\newtheorem{lemma}[theorem]{Lemma}
\newtheorem{proposition}[theorem]{Proposition}
\theoremstyle{definition}
\newtheorem{definition}[theorem]{Definition}
\newtheorem{remark}[theorem]{Remark}
\newtheorem{example}[theorem]{Example}
\begin{document}

\begin{frontmatter}

\title{The finitude of tamely ramified pro-$p$ extensions of number fields with cyclic $p$-class groups}

\author[inst1]{Yoonjin Lee}
\affiliation[inst1]{organization={Department of Mathematics, Ewha Womans University},
            city={Seoul},
            country={Republic of Korea}}
\ead{yoonjinl@ewha.ac.kr}

\author[inst2]{Donghyeok Lim \corref{cor1}}

\affiliation[inst2]{organization={Institute of Mathematical Sciences,
Ewha Womans University}, city={Seoul}, country={Republic of Korea}}
\ead{donghyeokklim@gmail.com}
\cortext[cor1]{Corresponding author}

\begin{abstract} 
 Let $p$ be an odd prime and $F$ be a number field whose $p$-class group is cyclic. Let $F_{\{\mathfrak{q}\}}$ be the maximal pro-$p$ extension of $F$ which is unramified outside a single non-$p$-adic prime ideal $\mathfrak{q}$ of $F$. In this work, we study the finitude of the Galois group $G_{\{\mathfrak{q}\}}(F)$ of $F_{\{\mathfrak{q}\}}$ over $F$.
 We prove that $G_{\{\mathfrak{q}\}}(F)$ is finite for the majority of $\mathfrak{q}$'s such that the generator rank of $G_{\{\mathfrak{q}\}}(F)$ is two, provided that for $p = 3$, $F$ is not a complex quartic field containing the primitive third roots of unity.
\end{abstract}

\begin{keyword}
Ray class field tower \sep Tame Fontaine-Mazur conjecture \sep Powerful pro-$p$ groups
\MSC[2020] 11R32 \sep 11R37
\end{keyword}

\end{frontmatter}

\date{}

\section{Introduction}

Let $p$ be a prime. Let $K$ be a number field, and let $S$ be a finite set of places of $K$. Let $K_{S}$ be the maximal pro-$p$ extension of $K$ unramified outside $S$. Let $G_{S}(K)$ be the Galois group of $K_{S}$ over $K$. It has been a long-standing problem to determine whether the Galois group $G_{S}(K)$ is finite or not. The problem is a generalization of the famous $p$-class field tower problem. If $S$ contains some primes of $K$ over $p$, then the abelianization $G_{S}(K)^{\mathrm{ab}}$ of $G_{S}(K)$ can be infinite. Hence, the class field theory can be used to prove the infinitude of $G_{S}(K)^{\mathrm{ab}}$ \cite{Maire4}. However, if $S$ consists only of non-$p$-adic places of $K$, then $G_{S}(K)^{\mathrm{ab}}$ is always finite. This so-called \textit{tame} case has been poorly understood so far. In this work, we exclusively study the finitude of the Galois group $G_{S}(K)$ in the tame case.

The principal method in the tame case is the theorem of Golod and Shafarevich \cite{GolodShafarevich}, \cite[\S 7.7]{Koch}. For a pro-$p$ group $G$, let $d(G)$ be the generator rank of $G$ and $r(G)$ be the relation rank of $G$. The theorem of Golod and Shafarevich states that $G$ is infinite if $d(G)^{2}/4 \geq r(G)$.  For a finite abelian group $\mathfrak{A}$, let $\mathrm{rk}_{p}(\mathfrak{A})$ be the $p$-rank of $\mathfrak{A}$. For a number field $K$, let us denote the $\mathbb{Z}$-rank of the multiplicative group of units of the ring $\mathcal{O}_K$ of integers of $K$ by $r_{K}$. We define $\theta_{K,S} := 1$ if $S$ is empty and $K$ contains the primitive $p$th roots of unity and $\theta_{K,S} := 0$ otherwise. For the arithmetic pro-$p$ groups $G_{S}(K)$, their invariants $d(G_{S}(K))$ and $r(G_{S}(K))$ have been studied in terms of the arithmetic of $K$. By the theorem of Golod-Shafarevich, when $S$ is a finite set of finite non-$p$-adic primes of $K$, the group $G_{S}(K)$ is infinite if 
 \begin{equation}\label{eq1}
     \mathrm{rk}_{p}(\mathrm{Cl}_{K,S}) \geq 2 + 2 \sqrt{r_{K}+\theta_{K,S}+1},
 \end{equation}
 where $\mathrm{Cl}_{K,S}$ is the ray class group of $K$ modulo $\underset{\mathfrak{q} \in S}{\prod} \mathfrak{q}$ (cf. \cite{HajirMaire2}). The Golod-Shafarevich test~\eqref{eq1} has been used to find many examples of infinite pro-$p$ towers of number fields. However, the test is limited since the failure of the test gives us no information on the infinitude of $G_{S}(K)$. In general, for the test to be successful, either $S$ or the $p$-rank of the class group of $K$ should be large enough. Therefore, when both the $p$-rank of the class group of $K$ and the set $S$ are small, not much work has been done on the infinitude of $G_{S}(K)$. (For an application of Golod-Shafarevich test to $G_{S}(K)$ with small $S$, readers can refer to \cite{HajirMaire}.)

We point out that for proof of the finitude of $G_{S}(K)$, the methods are more limited. As far as we know, there has been no work where the Golod-Shafarevich test was used to prove that some $G_S(K)$ is finite. As a fundamental method we can study the quotients of the lower $p$-central series $\{ \, G_{S}(K)^{(i,p)} \, \}_{i \in \mathbb{N}}$ of $G_{S}(K)$ \cite[Chapter III. \S 8]{NSW}. If we have $G_{S}(K)^{(i,p)}=G_{S}(K)^{(i+1,p)}$ for some $i$, then $G_{S}(K)$ is finite. There is an algorithm of Skopin for computing $G_{S}(K)^{(i,p)}/G_{S}(K)^{(i+1,p)}$ from a presentation of $G_{S}(K)$ \cite{Koch2}, \cite{Skopin}. To effectively use the algorithm, we need enough information on a minimal presentation    
\begin{equation*}
1 \longrightarrow R \longrightarrow \mathcal{F} \longrightarrow G_{S}(K) \longrightarrow 1
\end{equation*}
of $G_{S}(K)$. In fact, there are two difficult problems in understanding the minimal presentations of $G_{S}(K)$. First, it is hard to apply the method in \cite[Chapter 11.4]{Koch} to general number fields. Second, the method in \cite[Chapter 11.4]{Koch} determines elements of $R$ only modulo $[\mathcal{F},\mathcal{F}]^{p}[[\mathcal{F},\mathcal{F}],\mathcal{F}]$ (cf. \cite[Theorem 11.10]{Koch}). This is insufficient for understanding $G_{S}(K)^{(i,p)}/G_{S}(K)^{(i+1,p)}$ for $i \geq 3$. In \cite{BostonLeedhamGreen}, Boston introduced an algorithm for computing the tame pro-$p$ groups, which is obtained by strengthening the $p$-group generation algorithm of O’Brien by using the number theoretic constraints on the Galois groups; the algorithm needs to be implemented on computer programs.

One easy and well-understood general case is when $d(G_{S}(K))=1$. In that case, by Burnside's basis theorem, $G_{S}(K)$ is a pro-$p$ cyclic group. Then by the class field theory, $G_{S}(K)$ is finite (cf. \cite{Maire3}). This can be understood as a specially known case of the general Tame Fontaine-Mazur conjecture since $\mathbb{Z}_{p}$ is $p$-adic analytic. The conjecture states that if $S$ is a finite set of non-$p$-adic places of $K$, then any $p$-adic analytic quotient of $G_{S}(K)$ is finite.  

In this work, we study the finitude of $G_{\{\mathfrak{q}\}}(F)$ for a number field $F$ with cyclic $p$-class group and a non-$p$-adic prime ideal $\mathfrak{q}$ of $F$ by determining whether or not $G_{\{\mathfrak{q}\}}(F)$ is $p$-adic analytic. If $G_{\{\mathfrak{q}\}}(F)$ is $p$-adic analytic, then under the Tame Fontaine-Mazur conjecture, $G_{\{\mathfrak{q}\}}(F)$ is expected to be finite.

Since the late 80s, Lazard's theory of $p$-adic analytic groups \cite{Lazard} has been revisited by focusing more on the uniformly powerful pro-$p$ groups (Definition \ref{uniformlypowerful}) instead of Lazard's saturable groups \cite{DdSMS}. Accordingly, work has been done on the {\it Tame Fontaine-Mazur conjecture-uniform version}, which is equivalent to the Tame Fontaine-Mazur conjecture. The Tame Fontaine-Mazur conjecture-uniform version states that if $S$ is a finite set of non-$p$-adic places of $K$, then any quotient of $G_{S}(K)$ that is uniformly powerful is trivial. 
 This conjecture is already known to be true when $d(G_{S}(K))=1$ or $2$ (cf. the paragraph before Proposition \ref{powerfulTFM} in \S 2). As a consequence, if $d(G_{S}(K)) = 2$ and $G_{S}(K)$ is powerful (Definition \ref{powerful}), then $G_{S}(K)$ is finite unconditionally (Proposition \ref{powerfulTFM}).
Therefore, we study the powerfulness of $G_{\{\mathfrak{q}\}}(F)$. 
 In this work, we focus on the case when $p$ is odd. In particular, we assume that $p$ is odd starting from Proposition \ref{Zelmanovtheorem}; the definitions of the powerfulness of pro-$p$ groups for the case when $p$ is odd and the case when $p=2$ are different.

\vskip 10pt
We briefly describe our main results as follows. If $\mathfrak{q}$ does not split in the $p$-class field tower of $F$, then we obtain the following theorem.

\begin{theorem}\label{maintheorem1}
Let $F$ be a number field whose $p$-class group is non-trivial and cyclic. Let $\mathfrak{q}$ be a non-$p$-adic prime of $F$ which does not split in the $p$-class field tower of $F$. Then $G_{\{\mathfrak{q}\}}(F)$ is powerful and finite.
\end{theorem}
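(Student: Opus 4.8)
The plan is to prove that $G:=G_{\{\mathfrak q\}}(F)$ is powerful, and then to deduce its finiteness from Proposition~\ref{powerfulTFM}. First I would bound the generator rank. Since $\mathfrak q\nmid p$, every abelian subextension of $F_{\{\mathfrak q\}}/F$ is tamely ramified at $\mathfrak q$, hence of conductor dividing $\mathfrak q$, so $G^{\mathrm{ab}}$ is the $p$-Sylow subgroup $A_{F,\mathfrak q}$ of the ray class group $\mathrm{Cl}_{F,\mathfrak q}$. Tensoring the exact sequence $\mathcal O_F^\times\to(\mathcal O_F/\mathfrak q)^\times\to\mathrm{Cl}_{F,\mathfrak q}\to\mathrm{Cl}_F\to 1$ with $\mathbb Z_p$ exhibits $A_{F,\mathfrak q}$ as an extension of the $p$-class group $A_F$ (cyclic, by hypothesis) by a quotient of the cyclic group $(\mathcal O_F/\mathfrak q)^\times\otimes\mathbb Z_p$, so $1\le d(G)=\mathrm{rk}_{p}(A_{F,\mathfrak q})\le 2$. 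Moreover, $A_F$ being cyclic forces the Galois group of the maximal unramified pro-$p$ extension of $F$ (that is, $G_{\emptyset}(F)$) to have cyclic abelianization, hence to be procyclic, hence finite cyclic; thus the $p$-class field tower of $F$ is just the $p$-Hilbert class field $F_1$, with $\mathrm{Gal}(F_1/F)\cong A_F$.

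Next I would show that a decomposition group $D_\mathfrak q\le G$ at $\mathfrak q$ is all of $G$. The decomposition field $F_{\{\mathfrak q\}}^{D_\mathfrak q}$ is unramified at $\mathfrak q$ (since $\mathfrak q$ splits completely in it) and unramified away from $\mathfrak q$ by construction, so it is contained in $F_1$; in fact it equals the subfield of $F_1$ fixed by the decomposition group of $\mathfrak q$ in $\mathrm{Gal}(F_1/F)$. The hypothesis that $\mathfrak q$ does not split in the $p$-class field tower means precisely that this last decomposition group is all of $\mathrm{Gal}(F_1/F)$, so $F_{\{\mathfrak q\}}^{D_\mathfrak q}=F$, i.e. $D_\mathfrak q=G$. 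Consequently the inertia subgroup $I_\mathfrak q$ at $\mathfrak q$ is normal in $G$; it is procyclic (the image of tame inertia at a prime $\mathfrak q\nmid p$ in a pro-$p$ group is a quotient of $\mathbb Z_p$); and $G/I_\mathfrak q=\mathrm{Gal}(F_1/F)\cong A_F$ is cyclic. In short, $G$ is metacyclic, an extension of a cyclic group by a procyclic normal subgroup.

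For powerfulness, write $I_\mathfrak q=\overline{\langle t\rangle}$; if $I_\mathfrak q=1$ then $G\cong A_F$ is cyclic and there is nothing to do, so assume $I_\mathfrak q\ne 1$. Conjugation makes $G$ act on $I_\mathfrak q$ through the pro-$p$ Sylow subgroup of $\mathrm{Aut}(I_\mathfrak q)$, and since $p$ is odd every such automorphism of a procyclic pro-$p$ group has the form $x\mapsto x^{1+pc}$; hence $G$ acts trivially on $I_\mathfrak q/\overline{\langle t^{p}\rangle}$. As $\overline{\langle t^{p}\rangle}$ is characteristic in the normal subgroup $I_\mathfrak q$, it is normal in $G$, and in $G/\overline{\langle t^{p}\rangle}$ the image of $I_\mathfrak q$ is a central subgroup of order at most $p$ whose quotient is the cyclic group $A_F$. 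A group that is a central extension of a cyclic group is abelian, so $[G,G]\subseteq\overline{\langle t^{p}\rangle}\subseteq\overline{G^{p}}$; since $p$ is odd, this says exactly that $G$ is powerful.

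Finally, for finiteness: if $d(G)\le 1$ then $G$ is procyclic with finite abelianization $A_{F,\mathfrak q}$, hence finite; and if $d(G)=2$ then $G$ is powerful with $d(G)=2$, hence finite by Proposition~\ref{powerfulTFM}. In all cases $G_{\{\mathfrak q\}}(F)$ is powerful and finite. I expect the only genuinely delicate point to be the identification $D_\mathfrak q=G$ in the second paragraph: this rests on reading ``$\mathfrak q$ does not split in the $p$-class field tower'' as ``$\mathfrak q$ is inert in $F_1/F$'', and it uses crucially that the cyclicity of $A_F$ collapses the tower to $F_1$. Once $G$ is seen to be metacyclic with procyclic normal inertia and cyclic quotient, the powerfulness is forced by the odd-$p$ constraint on automorphisms of procyclic groups, and the finiteness then follows formally.
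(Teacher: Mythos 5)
Your argument is correct (for odd $p$) but follows a genuinely different route from the paper at both key steps. To see that the decomposition group at $\mathfrak{q}$ is all of $G_{\{\mathfrak{q}\}}(F)$, the paper works with the Frattini quotient: it shows the decomposition group surjects onto $\mathrm{Gal}(M/F)\simeq(\mathbb{Z}/p\mathbb{Z})^2$ (ramification index $p$ times residue degree $p$) and concludes by Burnside's basis theorem; you instead observe that the decomposition field is unramified everywhere, hence lies in the $p$-Hilbert class field (the tower collapses since the $p$-class group is cyclic), and the non-splitting hypothesis forces it to be $F$. Both are valid, and your version makes the role of the cyclicity hypothesis more transparent. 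For powerfulness, the paper passes to the local group: non-splitting makes $G_{\{\mathfrak{q}\}}(F)$ a quotient of the tame local Galois group $\mathrm{Gal}(\overline{F_{\mathfrak{q}}}/F_{\mathfrak{q}})$, a Demushkin group of rank $2$, and rank-$2$ Demushkin groups are powerful (Example~\ref{Demushkin}, Lemma~\ref{quotient}); you stay global and prove powerfulness directly from the extension $1\to I_{\mathfrak{q}}\to G\to\mathrm{Gal}(F_1/F)\to 1$ with $I_{\mathfrak{q}}$ procyclic and normal, using that a pro-$p$ group of automorphisms of a procyclic pro-$p$ group acts by $x\mapsto x^{1+pc}$ when $p$ is odd, so that $G/\overline{\langle t^p\rangle}$ is a central-by-cyclic, hence abelian, group. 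In effect you reprove by hand that quotients of the rank-$2$ tame local group are powerful, which is more elementary and self-contained; what you lose is the case $p=2$, where your automorphism argument breaks (because of $x\mapsto x^{-1}$) while the paper's Demushkin argument, with the $G/\overline{G^4}$ definition of powerful, still applies --- note that the paper only imposes $p$ odd starting from Proposition~\ref{Zelmanovtheorem}, so its Theorem~\ref{maintheorem1} is meant to cover $p=2$ as well. Your rank bound $d(G)\leq 2$ via the ray class group modulo $\mathfrak{q}$ is also a fine substitute for the paper's standing assumption $d=2$ (the case $d\leq 1$ being classical), and your final reduction to Proposition~\ref{powerfulTFM} matches the paper.
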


 We briefly describe the idea for the proof of Theorem \ref{maintheorem1} as follows. We first observe that $\mathfrak{q}$ does not split in $F_{\{\mathfrak{q}\}}$. Then $G_{\{\mathfrak{q}\}}(F)$ is isomorphic to a quotient of the Galois group of the maximal pro-$p$ extension of the completion $F_{\mathfrak{q}}$ of $F$ at $\mathfrak{q}$, which is a Demushkin group of generator rank $2$. We use the fact that the Demushkin groups of generator rank $2$ are powerful.

If $\mathfrak{q}$ splits in the $p$-class field tower of $F$, then $G_{\{\mathfrak{q}\}}(F)$ may not be powerful (Proposition \ref{nonpowerful}). However, our next Theorem \ref{maintheorem2} implies that for the majority of such $\mathfrak{q}$'s, $G_{\{\mathfrak{q}\}}(F)$ is powerful and finite. For a technical reason (cf. Proposition \ref{upperbound}), we assume that for $p=3$, $F$ is not a complex quartic number field containing the primitive third root $\zeta_{3}$ of unity.

\begin{theorem}\label{maintheorem2}
Let $p$ be an odd prime. Let $F$ be a number field whose $p$-class group is non-trivial and cyclic. Assume that for $p=3$, $F$ is not a complex quartic number field containing $\zeta_{3}$. Let $r_{F}$ be the $\mathbb{Z}$-rank of the multiplicative group of units of $\mathcal{O}_F$. Let $\mathfrak{M}$ be the set of primes $\mathfrak{q}$ of $F$ which split (not necessarily completely) in the $p$-class field tower of $F$ such that $G_{\{\mathfrak{q}\}}(F)$ has generator rank two. Let $\mathfrak{M}'$ be the subset of $\mathfrak{M}$ consisting of the primes $\mathfrak{q}$ such that $G_{\{\mathfrak{q}\}}(F)$ is infinite. Let $D(\mathfrak{M})$ and $D(\mathfrak{M}')$ be the Dirichlet densities of $\mathfrak{M}$ and $\mathfrak{M}'$, respectively. Then, the ratio $D(\mathfrak{M}')/D(\mathfrak{M})$ is bounded above by $p^{-\mathrm{max}\{r_{F}-1,1\}}$.
\end{theorem}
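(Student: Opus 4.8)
The plan is to reduce the entire statement to Proposition~\ref{powerfulTFM}. Since every $\mathfrak{q}\in\mathfrak{M}$ has $d(G_{\{\mathfrak{q}\}}(F))=2$, that proposition says that a prime $\mathfrak{q}\in\mathfrak{M}$ for which $G_{\{\mathfrak{q}\}}(F)$ is \emph{powerful} (Definition~\ref{powerful}) automatically has $G_{\{\mathfrak{q}\}}(F)$ finite, so $\mathfrak{q}\notin\mathfrak{M}'$. Hence $\mathfrak{M}'\subseteq\mathfrak{M}_{\mathrm{np}}:=\{\mathfrak{q}\in\mathfrak{M}: G_{\{\mathfrak{q}\}}(F)\text{ is not powerful}\}$, and it is enough to show $D(\mathfrak{M}_{\mathrm{np}})/D(\mathfrak{M})\le p^{-\max\{r_{F}-1,1\}}$. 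Write $L$ for the $p$-class field tower of $F$; since $\mathrm{Cl}_{F}$ has cyclic $p$-part, $\mathrm{Gal}(L/F)$ is a finite cyclic $p$-group, and a prime $\mathfrak{q}\in\mathfrak{M}$ splits in $L$ exactly when its Frobenius lies in $p\cdot\mathrm{Gal}(L/F)$. For such $\mathfrak{q}$ I would fix a minimal presentation $1\to R\to\mathcal{F}\to G_{\{\mathfrak{q}\}}(F)\to1$ with $\mathcal{F}=\langle x,y\rangle$ free pro-$p$ of rank two, choosing $x$ to lift a generator of $\mathrm{Gal}(L/F)$ and $y$ to generate an inertia subgroup above $\mathfrak{q}$ (which lies in $\mathrm{Gal}(F_{\{\mathfrak{q}\}}/L)$, since $L/F$ is unramified at $\mathfrak{q}$).

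The local input is a leading-term criterion for powerfulness. With $\{\mathcal{F}^{(i,p)}\}$ the lower $p$-central series, $\mathcal{F}^{(2,p)}/\mathcal{F}^{(3,p)}$ is the three-dimensional $\mathbb{F}_{p}$-space spanned by the images of $x^{p}$, $y^{p}$ and $[x,y]$ (here $p$ odd is used). For $d(G_{\{\mathfrak{q}\}}(F))=2$ the group is powerful iff $[x,y]$ lies in the closed subgroup of $\mathcal{F}$ generated by $R$ and by $p$-th powers; modulo $\mathcal{F}^{(3,p)}$ this says that some member of a minimal generating system of $R$ has nonzero $[x,y]$-coordinate in $\mathcal{F}^{(2,p)}/\mathcal{F}^{(3,p)}$, equivalently that the cup product $\bigwedge^{2}H^{1}(G_{\{\mathfrak{q}\}}(F),\mathbb{F}_{p})\to H^{2}(G_{\{\mathfrak{q}\}}(F),\mathbb{F}_{p})$ is nonzero. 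Making this equivalence work in both directions uses the upper bound on $r(G_{\{\mathfrak{q}\}}(F))$ of Proposition~\ref{upperbound}, which is precisely why complex quartic fields containing $\zeta_{3}$ must be excluded when $p=3$. I would then make the relators explicit via Koch's method: there is one distinguished \emph{local} relator $\rho_{0}$ coming from the decomposition group of $\mathfrak{q}$, of Demushkin shape $\rho_{0}=y^{\,N\mathfrak{q}-1}[y,\sigma]^{\pm1}$ with $\sigma$ a Frobenius lift, together with \emph{global} relators $\rho_{1},\dots,\rho_{s}$ whose $\mathcal{F}^{(3,p)}$-images are expressed through power-residue symbols of the units of $\mathcal{O}_{F}$ and of a generator $\alpha$ of $\mathfrak{a}^{p}$, where $\mathfrak{a}$ represents a generator of $\mathrm{Cl}_{F}(p)$. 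Because $\mathfrak{q}$ splits in $L$, the image of $\sigma$ in $\mathrm{Gal}(L/F)$ lies in $p\cdot\mathrm{Gal}(L/F)$, so $\sigma\in\mathcal{F}^{(2,p)}$ and hence $[y,\sigma]\in\mathcal{F}^{(3,p)}$; thus $\rho_{0}$ has zero $[x,y]$-coordinate. Consequently, for $\mathfrak{q}\in\mathfrak{M}$, $G_{\{\mathfrak{q}\}}(F)$ is powerful iff some global relator $\rho_{i}$ has nonzero $[x,y]$-coordinate; and since membership in $\mathfrak{M}$ already forces the first-order symbols to vanish (the units of $\mathcal{O}_{F}$ are $p$-th powers modulo $\mathfrak{q}$, and $\mathfrak{q}$ splits in the first layer of $L$), these $[x,y]$-coordinates are second-order residue conditions — splitting conditions in fields such as $F(\mu_{p^{2}},\sqrt[p^{2}]{u})$ and $F(\mu_{p^{2}})$ — that are independent of the conditions defining $\mathfrak{M}$.

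The density estimate is then a Chebotarev count. Choose one finite Galois extension $N/F$ through which all the conditions above factor; then $D(\mathfrak{M}_{\mathrm{np}})/D(\mathfrak{M})$ is a ratio of cardinalities of conjugation-stable subsets of $\mathrm{Gal}(N/F)$, which I would bound in two complementary ways. On the one hand, ``$G_{\{\mathfrak{q}\}}(F)$ not powerful'' forces the triviality of the $p$-th power residue symbols attached to an $\mathbb{F}_{p}$-subspace of $\mathcal{O}_{F}^{\times}/(\mathcal{O}_{F}^{\times})^{p}$ of dimension at least $r_{F}-1$ — the loss of one degree of freedom reflecting the reciprocity law $\prod_{v}(a,b)_{v}=1$, which already ties one of these symbols to data determined by $\mathfrak{M}$ — giving $D(\mathfrak{M}_{\mathrm{np}})\le p^{-(r_{F}-1)}D(\mathfrak{M})$. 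On the other hand, non-powerfulness independently forces at least one further mod-$p^{2}$ splitting condition on $\mathfrak{q}$ (in $F(\mu_{p^{2}})$, or in the degree-$p^{2}$ layer governing the merging $\mathfrak{a}^{p}=(\alpha)$), giving $D(\mathfrak{M}_{\mathrm{np}})\le p^{-1}D(\mathfrak{M})$. Combining, $D(\mathfrak{M}_{\mathrm{np}})/D(\mathfrak{M})\le\min\{p^{-(r_{F}-1)},p^{-1}\}=p^{-\max\{r_{F}-1,1\}}$, which is the assertion.

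The main obstacle is the middle step: producing a presentation of the \emph{global} group $G_{\{\mathfrak{q}\}}(F)$ precise enough to read off the $[x,y]$-coordinates of \emph{all} its relators for an arbitrary base field $F$, and controlling $r(G_{\{\mathfrak{q}\}}(F))$ tightly enough that the leading-term powerfulness criterion loses nothing. This is exactly the point where Koch's classical computation must be extended and where Proposition~\ref{upperbound}, and with it the exclusion of the $p=3$ complex quartic case, becomes unavoidable. A secondary difficulty is the reflection/reciprocity bookkeeping that both produces the exact drop $r_{F}-1$ and certifies that the conditions entering the count are genuinely implied by non-powerfulness and mutually independent.
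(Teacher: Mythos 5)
Your outer frame is the same as the paper's (reduce via Proposition~\ref{powerfulTFM} to bounding the density of $\mathfrak{q}\in\mathfrak{M}$ with $G_{\{\mathfrak{q}\}}(F)$ non-powerful, then appeal to Chebotarev), and your leading-term/cup-product criterion for powerfulness of a $2$-generated pro-$p$ group is essentially sound for odd $p$. But the core of the argument is missing, and the route you sketch for it would not deliver the stated bound. The decisive quantitative input cannot come from $p$-th power residue symbols of units of $\mathcal{O}_{F}$ at $\mathfrak{q}$: vanishing of those symbols (equivalently, Frobenius trivial on $\mathrm{Gov}(F)$) is exactly the condition $d(G_{\{\mathfrak{q}\}}(F))=2$ that already defines $\mathfrak{M}$, so conditioning on $\mathfrak{M}$ it yields no factor $p^{-(r_{F}-1)}$ at all; and your ``mod-$p^{2}$'' conditions are asserted, not derived, with no justification of the claimed independence. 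In the paper the saving comes from the arithmetic of $F_{1}$, not of $F$: non-powerfulness forces a Heisenberg quotient (Proposition~\ref{Zelmanovtheorem}, Lemma~\ref{nonabelian2pgroup}), hence a degree-$p$ cyclic extension of $F_{1}$ ramified at exactly $p-1$ of the primes over $\mathfrak{q}$ (Proposition~\ref{G_S/G_S^{p}}, Corollary~\ref{criterionramification}); Gras--Munnier over $F_{1}$ then says the Frobenius in $\mathrm{Gal}(\mathrm{Gov}(F_{1})/F_{1}(\zeta_{p}))$ is annihilated by $(X-1)^{p-2}$ (Proposition~\ref{mainproposition}); and the proportion of such Frobenius classes is bounded by Kummer duality together with the $\mathbb{F}_{p}[\mathrm{Gal}(F_{1}/F)]$-module structure of $U_{F_{1}}\otimes\mathbb{F}_{p}$ (Proposition~\ref{Galoismoduleunit}, using Diederichsen--Heller--Reiner and Iwasawa's capitulation computation, which is where the cyclicity hypothesis and the count $\max\{r_{F}-1,1\}$ actually originate). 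None of this has a counterpart in your sketch.

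Two further points. First, you misuse Proposition~\ref{upperbound}: it is not a bound on the relation rank $r(G_{\{\mathfrak{q}\}}(F))$, but a bound on the proportion $\sharp\bigl(V_{\emptyset}(F_{1})/V_{\emptyset}(F)F_{1}^{\times p}\bigr)[\Psi]/\sharp\bigl(V_{\emptyset}(F_{1})/V_{\emptyset}(F)F_{1}^{\times p}\bigr)$, and the exclusion of complex quartic $F\supset\mathbb{Q}(\zeta_{3})$ for $p=3$ enters there because in the case $r_{F}=1$, $\delta_{F}=1$ one cannot guarantee the needed free summand in the unit module --- not because of any control of $r(G)$ or of a leading-term criterion. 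Second, the step you yourself flag as the main obstacle --- writing down all relators of the global group $G_{\{\mathfrak{q}\}}(F)$ precisely enough, via Koch's method, for an arbitrary base field $F$ --- is precisely what the paper's introduction explains is out of reach in general, and the paper's proof is designed to avoid it; as it stands your proposal defers the entire proof to that unavailable computation, so the argument is not complete.
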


Throughout this work, $F_{1}$ denotes the unramified cyclic extension of $F$ of degree $p$. When $\mathfrak{q}$ splits in $F_{1}$, let us denote the primes of $F_{1}$ over $\mathfrak{q}$ by $\mathfrak{q}_{1}, \ldots, \mathfrak{q}_{p}$. 

We briefly describe the main idea for the proof of Theorem \ref{maintheorem2} as follows.
To prove Theorem \ref{maintheorem2}, we use group-theoretic arguments to reduce the question of the powerfulness of $G_{\{\mathfrak{q}\}}(F)$ to a class field theoretical question of the non-existence of a degree-$p$ cyclic extension of $F_{1}$, where precisely $\mathfrak{q}_{1},\ldots, \mathfrak{q}_{p-1}$ are ramified (Corollary \ref{criterionramification}). By the Gras-Munnier theorem (Theorem \ref{GrasMunnier}), the latter is answered by studying the Galois group of the governing field $\mathrm{Gov}(F_{1})$ of $F_{1}$ over $F_{1}(\zeta_{p})$ (Proposition \ref{mainproposition}). Using Kummer theory, we can study $V_{\emptyset}(F_{1})/F_{1}^{\times \, p}$ instead. The group $V_{\emptyset}(F_{1})/F_{1}^{\times \, p}$ is approximated by $U_{F_{1}} \otimes_{\mathbb{Z}} \mathbb{F}_{p}$, where $U_{F_{1}}$ denotes the multiplicative group of units of $\mathcal{O}_{F_1}$. Therefore, some knowledge of $\mathbb{F}_{p}[\mathrm{Gal}(F_{1}/F)]$-module structure of $U_{F_{1}} \otimes_{\mathbb{Z}} \mathbb{F}_{p}$ is sufficient to answer the class field theoretic question. The relative Galois module structure of $U_{F_1} \otimes_{\mathbb{Z}} \mathbb{F}_p$ is relatively easy to handle in our work because $\mathrm{Gal}(F_{1}/F)$ is cyclic of prime order, and we have a strong assumption on the $p$-class group of $F$. Our strategy is in the same spirit as the recent applications of the relative Galois module structure of (algebraic) units to various studies of tamely ramified pro-$p$ extensions of number fields. Interested readers can refer to \cite{HajirMaire3, Deficiency, Ozaki, Ozaki2}.

\
 \vskip 10pt
 \noindent \textbf{Notation}
 \vskip 10pt
 For a set $X$, we denote its number of elements by $\sharp X$. The finite field of $p$ elements is denoted by $\mathbb{F}_{p}$. For a group $G$, $\mathbb{F}_{p}$ also denotes the $\mathbb{F}_{p}[G]$-module with the trivial $G$-action. For a vector space $M$ over $\mathbb{F}_{p}$, its dimension is denoted by $\mathrm{dim}_{\mathbb{F}_{p}}M$. For a number field $K$, the class group of $K$ is denoted by $\mathrm{Cl}_{K}$. For a finite abelian group $\mathfrak{A}$, its $p$-primary component is denoted by $\mathfrak{A}(p)$. 

\section{The finitude of powerful tame pro-$p$ groups}

In this section, we explain Proposition \ref{powerfulTFM}, which states that a Galois group of a tamely ramified pro-$p$ extension is finite if it is powerful and its generator rank is two. We recall the notions of powerfulness and uniformly powerfulness of pro-$p$ groups and the Tame Fontaine-Mazur conjecture-uniform version. For more information on the group-theoretic approach to Lazard's theory of $p$-adic analytic groups, readers can refer to \cite{DdSMS}. For a survey on the Tame Fontaine-Mazur conjecture-uniform version, readers can refer to \cite{APRT}.

\begin{definition}\cite{Lazard}
A pro-$p$ group $G$ is $p$-adic analytic if $G$ has the structure of a $p$-adic analytic manifold with the properties
\begin{enumerate}
    \item[(i)] the function $G \times G \rightarrow G$ given by $(x,y) \rightarrow xy$ is analytic;
    \item[(ii)] the function $G \rightarrow G$ given by $x \rightarrow x^{-1}$ is analytic.
\end{enumerate}
\end{definition}

Lazard's theory of $p$-adic analytic groups has been revisited group theoretically. The powerful pro-$p$ groups and uniformly powerful pro-$p$ groups play important roles.

\begin{definition}\label{powerful}
Let $p$ be an odd prime. Let $G$ be a pro-$p$ group. Then $G$ is called \textbf{powerful} if the quotient group $G/\overline{G^{p}}$ is abelian, where the bar denotes the topological closure and $G^{p}$ is the (normal) subgroup of $G$ generated by the $p$th powers of elements of $G$.
\end{definition}

\begin{remark}
When $p$ is even, a pro-$p$ group $G$ is called powerful if $G/\overline{G^{4}}$ is abelian.
\end{remark}

From the definition, the following lemma is immediate.

\begin{lemma}\label{quotient}
Any quotient of a powerful pro-$p$ group is also powerful.
\end{lemma}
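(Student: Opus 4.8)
The plan is to unwind the definition and check that passing to a quotient commutes with the two operations appearing in it: forming the (normal) subgroup generated by $p$th powers, and taking topological closure. Fix a powerful pro-$p$ group $G$ (so, $p$ being odd, $G/\overline{G^p}$ is abelian), let $N$ be a closed normal subgroup, set $H := G/N$, and let $\pi \colon G \twoheadrightarrow H$ be the canonical projection. The goal is to show that $H/\overline{H^p}$ is abelian.

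First I would record that $\pi(G^p) = H^p$: since $\pi$ is a surjective continuous homomorphism, the image of the subgroup generated by $\{\, g^p : g \in G \,\}$ is the subgroup generated by $\{\, \pi(g)^p : g \in G \,\} = \{\, x^p : x \in H \,\}$. Next I would show $\pi\bigl(\overline{G^p}\bigr) = \overline{H^p}$. On one hand $\pi$ is a continuous map between compact Hausdorff spaces, hence a closed map, so $\pi\bigl(\overline{G^p}\bigr)$ is a closed subgroup of $H$ containing $\pi(G^p) = H^p$, whence $\overline{H^p} \subseteq \pi\bigl(\overline{G^p}\bigr)$. On the other hand, continuity of $\pi$ gives $\pi\bigl(\overline{G^p}\bigr) \subseteq \overline{\pi(G^p)} = \overline{H^p}$. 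Combining the two inclusions yields the claimed equality. Since $\overline{G^p}$ and $N$ are both compact and $N$ is normal, $\overline{G^p} N$ is a closed normal subgroup of $G$ with $\pi\bigl(\overline{G^p}\bigr) = \overline{G^p}N/N$, and the standard isomorphism theorems give
\begin{equation*}
H\big/\overline{H^p} \;=\; (G/N)\big/\bigl(\overline{G^p}N/N\bigr) \;\cong\; G\big/\bigl(\overline{G^p}N\bigr),
\end{equation*}
which is a quotient of the abelian group $G/\overline{G^p}$ and is therefore abelian. Hence $H$ is powerful, which is what was wanted; the case $p = 2$ is identical with $G^p$ replaced by $G^4$ throughout.

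There is no genuine obstacle here — the lemma is formal, as the word ``immediate'' in the statement suggests. The only point that deserves a moment's attention is the interchange of topological closure with the projection $\pi$ in the step $\pi\bigl(\overline{G^p}\bigr) = \overline{H^p}$; this works precisely because pro-$p$ groups are compact Hausdorff, so that continuous surjections between them are closed maps. I expect the write-up to be only a few lines.
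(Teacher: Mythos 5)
Your proof is correct, and it is exactly the routine verification the paper has in mind: the paper gives no argument at all, simply declaring the lemma ``immediate from the definition,'' and your write-up (using that a continuous surjection of profinite groups is a closed map to get $\pi\bigl(\overline{G^p}\bigr)=\overline{H^p}$, then quoting the isomorphism theorems) is the standard way to make that immediacy precise.
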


\begin{example}\label{Demushkin}
A pro-$p$ group $G$ is a Demushkin group if 
\begin{itemize}
\item[(1)] $\mathrm{dim}_{\mathbb{F}_p}\, H^1(G, \mathbb{F}_p) < \infty$,
\item[(2)] $\mathrm{dim}_{\mathbb{F}_p}\, H^2(G, \mathbb{F}_p) = 1$,
\item[(3)] the cup product $H^1(G, \mathbb{F}_p) \times H^1(G, \mathbb{F}_p) \to H^2(G, \mathbb{F}_p)$ is a non-degenerate bilinear form.
\end{itemize}
From the theorem on minimal presentations of Demushkin pro-$p$ groups (cf. \cite[Theorem 3.9.11]{NSW}, \cite[Theorem 3.9.19]{NSW}), we can verify that Demushkin pro-$p$ groups with generator rank $2$ are powerful. It is worth noting that when $p$ is odd, one can also verify this using \cite[Theorem 5.1.6]{SymondsWeigel}.
\end{example}

We also have the following result.

\begin{proposition}\label{powerfulrank}
Let $G$ be a finitely generated pro-$p$ group. If $G$ is powerful, then for any closed subgroup $H$ of $G$, we have $d(G) \geq d(H)$.
\end{proposition}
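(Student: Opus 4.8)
The statement asserts that a finitely generated powerful pro-$p$ group $G$ has ``rank'' $\sup\{d(H) : H \le_c G\}$ equal to $d(G)$, a standard fact in the theory of $p$-adic analytic pro-$p$ groups; the plan is to either quote it from \cite{DdSMS} or prove it through the powerful filtration, which I now sketch.

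First I would set up the powerful filtration $P_1 = G$, $P_{i+1} = \overline{P_i^{p}}$, and record the standard facts that each $P_i$ is powerful, $[P_i, P_j] \subseteq P_{i+j}$, $\bigcap_i P_i = 1$, and — the crucial one — that the $p$-th power map induces a surjective homomorphism $P_i/P_{i+1} \twoheadrightarrow P_{i+1}/P_{i+2}$ for every $i$; the last fact gives $\dim_{\mathbb{F}_p}(P_i/P_{i+1}) \le \dim_{\mathbb{F}_p}(P_1/P_2) = d(G)$ for all $i$. Next, given a closed subgroup $H$, I would set $H_i = H \cap P_i$ and note that $\overline{H_i^{p}} \subseteq H_{i+1}$, $[H_i, H_j] \subseteq H_{i+j}$, $\bigcap_i H_i = 1$, and that the inclusions $H_i \subseteq P_i$ induce injections $H_i/H_{i+1} \hookrightarrow P_i/P_{i+1}$, so $\dim_{\mathbb{F}_p}(H_i/H_{i+1}) \le d(G)$ for all $i$. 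The proof would then be finished by the inequality $d(H) \le \sup_i \dim_{\mathbb{F}_p}(H_i/H_{i+1})$, which immediately gives $d(H) \le d(G)$.

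The hard part is this last inequality. Informally it says that a minimal generating set of $H$ can be chosen so that, for each $i$, the number of chosen generators that lie in $H_i$ but not in $H_{i+1}$ is at most $\dim_{\mathbb{F}_p}(H_i/H_{i+1})$; equivalently, it is a statement about the graded restricted Lie algebra $\bigoplus_i H_i/H_{i+1}$ and how its homogeneous generators distribute over the degrees. I would attack it by lifting a minimal generating set of $H$ layer by layer, using $\overline{H_i^{p}} \subseteq H_{i+1}$ and $[H_i, H_i] \subseteq H_{2i} \subseteq H_{i+1}$ to control the first layer in which a given generator is nontrivial — this bookkeeping is the heart of the argument (and the heart of the corresponding proof in \cite{DdSMS}). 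An essentially equivalent alternative I would keep in reserve is to first reduce to $G$ finite — choosing an open normal $M \trianglelefteq G$ with $M \cap H$ contained in the Frattini subgroup of $H$, so that $G/M$ is finite powerful with $d(G/M) \le d(G)$ (Lemma~\ref{quotient}) and $HM/M$ still needs $d(H)$ generators — and then induct on $|G|$ via the quotient $G/C$ by a central subgroup $C$ of order $p$ contained in $\overline{G^{p}}$, in which the one troublesome case ($C \subseteq H$ but $C$ not in the Frattini subgroup of $H$) is precisely the finite manifestation of the same obstruction.
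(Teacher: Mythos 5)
The paper's entire proof of Proposition~\ref{powerfulrank} is the citation \cite[Theorem 3.8]{DdSMS}, so your first option (just quoting that result) coincides with what the paper does and is unobjectionable. The sketched argument you offer in its place, however, breaks at exactly the step you flag as the hard part: the inequality $d(H) \le \sup_i \dim_{\mathbb{F}_p}(H_i/H_{i+1})$ is simply false, even for powerful $G$ and the filtration $H_i = H \cap P_i$. Take $G = \mathbb{Z}_p \times \mathbb{Z}/p\mathbb{Z}$ (abelian, hence powerful, $d(G)=2$) and the closed subgroup $H = p\mathbb{Z}_p \times \mathbb{Z}/p\mathbb{Z}$. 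Then $P_i = p^{i-1}\mathbb{Z}_p \times \{0\}$ for $i \ge 2$, so $H_1/H_2 \simeq \mathbb{Z}/p\mathbb{Z}$ and $H_i/H_{i+1} \simeq p^{i-1}\mathbb{Z}_p/p^{i}\mathbb{Z}_p \simeq \mathbb{Z}/p\mathbb{Z}$ for $i \ge 2$: every graded piece is one-dimensional, yet $d(H)=2$, so your route would "prove" the false bound $d(H)\le 1$. The failure is structural rather than a matter of bookkeeping: the graded pieces control how many generators are needed \emph{in each layer}, so your own informal restatement only yields a bound by the sum of the $\dim_{\mathbb{F}_p}(H_i/H_{i+1})$ over the layers actually needed, never by their supremum; generators of $H$ that first become visible in different layers (here, the torsion generator in layer $1$ and a topological generator of $p\mathbb{Z}_p$ in layer $2$) cannot be merged. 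No layer-by-layer lifting can repair this, because the statement being lifted is wrong.

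Your reserve route is much closer to the actual proof in \cite{DdSMS}: one reduces to the finite quotients $G/P_i$, which are powerful with $d(G/P_i)\le d(G)$, and deduces the pro-$p$ statement from the finite one by an inverse-limit (compactness) argument --- note that your particular reduction, choosing $M$ with $M\cap H \le \Phi(H)$, tacitly assumes $\Phi(H)$ is open in $H$, i.e.\ that $H$ is finitely generated, which is part of what must be proved; the standard fix is to work with $H=\varprojlim H/(H\cap P_i)$. In the finite case your induction modulo a central subgroup $C\le G^{p}$ of order $p$ correctly isolates the one problematic configuration ($C\le H$, $C\not\le\Phi(H)$), but that case is not a residual nuisance: it is the entire content of the finite theorem (\cite[Theorem 2.9]{DdSMS}), and disposing of it requires genuine input from powerfulness (for instance the surjectivity of the $p$-th power map onto $G^{p}$ and its consequences), none of which appears in your sketch. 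As it stands, neither of your two routes is complete, and the honest options are either to supply the argument of \cite[Theorem 2.9]{DdSMS} in full or to do what the paper does and cite \cite[Theorem 3.8]{DdSMS}.
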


\begin{proof}
\cite[Theorem 3.8]{DdSMS}.
\end{proof}

Now let us explain a little more about Tame Fontaine-Mazur conjecture-uniform version.

\begin{definition}\label{uniformlypowerful}
A finitely generated pro-$p$ group $G$ is \textbf{uniformly powerful} if it is powerful and one has $\mathrm{dim}_{\mathbb{F}_{p}}G^{(i,p)}/G^{(i+1,p)}=d(G)$ for all $i$.
\end{definition}

The study of uniformly powerful pro-$p$ groups is helpful in studying general $p$-adic analytic groups through the following theorem.

\begin{theorem}\cite[Theorem 8.32]{DdSMS}\label{LazardGroup}
A topological group $G$ is a $p$-adic analytic group if and only if it has an open subgroup which is a uniformly powerful pro-$p$ group.
\end{theorem}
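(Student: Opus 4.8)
The plan is to prove the two implications of this equivalence separately; this is Lazard's characterisation of $p$-adic analytic groups in the group-theoretic reformulation of Dixon--du Sautoy--Mann--Segal, so I would follow the standard route and only sketch it here. \emph{Sufficiency.} Suppose $G$ has an open subgroup $U$ that is uniformly powerful pro-$p$, of dimension $d = d(U)$. The first step is to equip $U$ itself with a $p$-adic analytic manifold structure. Fix a minimal topological generating set $a_1,\dots,a_d$ of $U$; using uniform powerfulness (the successive quotients of the lower $p$-series all have order $p^{d}$, together with the powerfulness relation $[U,U] \subseteq \overline{U^{p}}$ of Definition \ref{powerful}), one shows that $(\lambda_1,\dots,\lambda_d) \mapsto a_1^{\lambda_1}\cdots a_d^{\lambda_d}$ is a homeomorphism from $\mathbb{Z}_p^{\,d}$ onto $U$ (coordinates of the second kind). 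The crucial and most technical step is then to check that, in these coordinates, multiplication $U \times U \to U$ and inversion $U \to U$ are given by power series converging on all of $\mathbb{Z}_p^{\,d}$, resp.\ $\mathbb{Z}_p^{\,2d}$. This follows from the Hall--Petrescu (Campbell--Baker--Hausdorff type) identity combined with the filtration estimates available for uniform groups ($[P_i,P_j]\subseteq P_{i+j}$ and $P_i^{\,p} = P_{i+1}$ for the lower $p$-series $P_i$), which force the $p$-adic valuations of the coefficients to grow; equivalently, one identifies $U$ with $\exp$ of a powerful $\mathbb{Z}_p$-Lie lattice and invokes convergence of the Baker--Campbell--Hausdorff series on it. Granting this, $U$ is a $p$-adic analytic group; since $U$ is open in $G$, transporting the chart on $U$ across a set of coset representatives makes $G$ a $p$-adic analytic manifold, and because $G$ is a topological group, multiplication near an arbitrary point of $G$ factors as an (analytic) translation followed by multiplication inside a coset of $U$ --- reducible to multiplication on $U$ --- and similarly for inversion, so $G$ is $p$-adic analytic.

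\emph{Necessity.} Conversely, suppose $G$ is $p$-adic analytic. In an analytic chart at the identity, a sufficiently small neighbourhood of $1$ is an open \emph{compact} subgroup $H$ of $G$, and it suffices to find a uniformly powerful open subgroup of $H$. The standard input is that a compact $p$-adic analytic group is \emph{linear}: $H$ is topologically isomorphic to a closed subgroup of $GL_m(\mathbb{Z}_p)$ for some $m$, obtained from a faithful analytic representation (e.g.\ the adjoint action on the associated $\mathbb{Q}_p$-Lie algebra together with an integrality argument, or Lazard's correspondence applied to a saturable open subgroup). A closed subgroup of $GL_m(\mathbb{Z}_p)$ has finite rank, and the structure theory of finite-rank pro-$p$ groups then yields an open uniformly powerful subgroup: for $p$ odd the principal congruence subgroup $\ker\!\big(GL_m(\mathbb{Z}_p)\to GL_m(\mathbb{Z}/p)\big)$ is uniformly powerful, so its intersection with $H$ is an open uniformly powerful subgroup of $H$, hence of $G$.

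The step I expect to be the main obstacle is the analyticity assertion in the sufficiency direction: upgrading the manifest homeomorphism $U \cong \mathbb{Z}_p^{\,d}$ to an identification under which the group law is given by everywhere-convergent power series. This is precisely where the arithmetic of uniformly powerful groups (the congruence filtration and the Hall--Petrescu / Baker--Campbell--Hausdorff machinery) does genuine work, in contrast with the purely formal reduction from $G$ to $U$. On the necessity side the comparatively delicate point is the linearity input --- producing a faithful $GL_m(\mathbb{Z}_p)$-representation of a compact $p$-adic analytic group --- after which rank bounds of the type in Proposition \ref{powerfulrank} and the elementary structure of congruence subgroups finish the argument.
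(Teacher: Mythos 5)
The paper offers no proof of this statement: it is quoted verbatim from \cite[Theorem 8.32]{DdSMS}, so the only comparison available is with that source. Your sufficiency half follows the cited route (coordinates of the second kind on a uniform open subgroup $U$, analyticity of the group law in those coordinates via Hall--Petrescu/Baker--Campbell--Hausdorff estimates, then transport of the chart to all of $G$ by translations) and is fine as a sketch.

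The necessity half, however, has a genuine gap. The concluding step is false as stated: powerfulness, hence uniformity, does not pass to closed (or even open) subgroups, so $H\cap\ker\bigl(\mathrm{GL}_m(\mathbb{Z}_p)\to \mathrm{GL}_m(\mathbb{Z}/p\mathbb{Z})\bigr)$ need not be uniformly powerful; the only property that descends is the generator-rank bound of Proposition \ref{powerfulrank}. For instance, for odd $p$ the closure of the subgroup of $\mathrm{SL}_2(\mathbb{Z}_p)$ generated by $1+pE_{12}$ and $1+pE_{21}$ lies in the principal congruence subgroup and is open in it, hence $3$-dimensional, yet it is topologically $2$-generated; a powerful $2$-generated pro-$p$ group has rank $2$ and so dimension at most $2$, so this subgroup is not powerful. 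The statement you actually need is the one you mention only in passing: a closed subgroup of $\mathrm{GL}_m(\mathbb{Z}_p)$ has finite rank, and every pro-$p$ group of finite rank contains an open powerful, hence an open uniformly powerful, subgroup. More seriously, the linearity input is circular in this context: in \cite{DdSMS} the embedding of a compact $p$-adic analytic group into some $\mathrm{GL}_m(\mathbb{Z}_p)$ is itself deduced from the theorem you are proving (one first produces a uniform open subgroup and embeds it by means of its $\mathbb{Z}_p$-Lie algebra), and the adjoint representation you propose as a substitute is not faithful whenever the centre is nontrivial, so it does not produce the required faithful representation on its own. The standard proof of necessity avoids linearity altogether: starting from an analytic chart at the identity, one shrinks the neighbourhood until the group law is given by a formal group law over $\mathbb{Z}_p$ satisfying suitable congruences, i.e.\ one produces an open \emph{standard} subgroup, and then checks that standard groups are uniformly powerful (Theorems 8.29 and 8.31 of \cite{DdSMS}). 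As written, your necessity argument either invokes a result normally derived from the theorem itself or relies on a false inheritance of powerfulness, and it needs to be redone along these lines.
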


By Theorem \ref{LazardGroup}, we can prove that Tame Fontaine-Mazur conjecture and Tame Fontaine-Mazur conjecture-uniform version are equivalent. (cf. \cite[\S 5.3]{APRT}). The Tame Fontaine-Mazur conjecture is still out of reach. Even though the uniform version is just equivalent to the original conjecture, the uniform version can be more approachable, because studying the uniformly powerful quotient of $G_{S}(K)$ maximizes the usage of the initial conditions given by $G_{S}(K)$. For the works on the Tame Fontaine-Mazur conjecture-uniform version, readers can refer to \cite{Boston, Boston2, HajirMaire3, Maire, Wingberg}.

Also, the structure of uniformly powerful pro-$p$ groups is known when the generator rank is $1$ or $2$. In that case, a uniformly powerful pro-$p$ group has $\mathbb{Z}_{p}$ as a quotient (cf. \cite[Theorem 3.17]{APRT}, \cite[Exercise 3.11 on p. 60]{DdSMS}). Therefore, by the class field theory, any uniformly powerful quotient $W$ of a tame pro-$p$ group $G_{S}(K)$ with $d(W) \leq 2$ is known to be trivial (cf. \cite[\S 5.4]{APRT}). From this fact, we have the following proposition.

\begin{proposition}\label{powerfulTFM}
Let $K$ be a number field and $S$ a finite set of non-$p$-adic places of $K$. Let $\mathcal{K}$ be a pro-$p$ extension of $K$ which is unramified outside $S$. If the Galois group $H=\mathrm{Gal}(\mathcal{K}/K)$ is powerful with $d(H) \leq 2$, then $H$ is finite.
\end{proposition}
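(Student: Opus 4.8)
The plan is to combine the $p$-adic analyticity of finitely generated powerful pro-$p$ groups with the already-recalled triviality of uniformly powerful tame quotients of generator rank at most two, after first passing to a suitable finite subextension so that the class field theoretic input can be applied in the right place.

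First I would record that, since $\mathcal{K}$ is a pro-$p$ extension of $K$ unramified outside $S$, we have $\mathcal{K}\subseteq K_{S}$, so $H$ is a quotient of $G_{S}(K)$; the hypothesis $d(H)\le 2$ in particular makes $H$ topologically finitely generated. A finitely generated powerful pro-$p$ group is $p$-adic analytic (cf.\ \cite{DdSMS}), so by Theorem \ref{LazardGroup} the group $H$ contains an open subgroup $U$ that is uniformly powerful. Since $H$ is finitely generated and powerful, Proposition \ref{powerfulrank} applied to the closed subgroup $U$ gives $d(U)\le d(H)\le 2$.

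Next I would identify $U$ arithmetically. Let $L$ be the subfield of $\mathcal{K}$ fixed by $U$, so that $[L:K]<\infty$ and $\mathcal{K}/L$ is a pro-$p$ extension unramified outside the finite set $S_{L}$ of places of $L$ lying above $S$, which again consists only of non-$p$-adic places. Thus $U=\mathrm{Gal}(\mathcal{K}/L)$ is a uniformly powerful quotient of the tame pro-$p$ group $G_{S_{L}}(L)$ with $d(U)\le 2$. By the structure of uniformly powerful pro-$p$ groups of generator rank $1$ or $2$ recalled just before this proposition, such a $U$ is either trivial or admits $\mathbb{Z}_{p}$ as a quotient; a $\mathbb{Z}_{p}$-quotient of $U$ would yield an infinite abelian pro-$p$ extension of $L$ unramified outside the non-$p$-adic set $S_{L}$, contradicting the finiteness of $G_{S_{L}}(L)^{\mathrm{ab}}$ in the tame case. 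Hence $U$ is trivial, and since $U$ is open in $H$ this forces $H$ to be finite.

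Since every ingredient is already at hand, I do not expect a genuine difficulty; the two points needing care are (i) invoking $p$-adic analyticity of the finitely generated powerful group $H$ so that Theorem \ref{LazardGroup} really produces an \emph{open} uniformly powerful subgroup, and (ii) checking that replacing $K$ by the fixed field $L$ of that subgroup preserves the tame hypothesis, so that the finiteness of the abelianization is being applied over $L$ and not merely over $K$.
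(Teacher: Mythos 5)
Your proof is correct and follows essentially the same route as the paper: pass to an open uniformly powerful subgroup $U$ of $H$, bound $d(U)\le 2$ via Proposition \ref{powerfulrank}, identify $U$ as a quotient of the tame group $G_{S_L}(L)$ over its fixed field, and conclude triviality from the known rank $\le 2$ case of the uniform Tame Fontaine--Mazur statement (which you unwind explicitly via the $\mathbb{Z}_p$-quotient and finiteness of the tame abelianization). The only cosmetic difference is that the paper obtains the open uniform subgroup directly from \cite[Theorem 4.2]{DdSMS} for finitely generated powerful pro-$p$ groups, whereas you detour through $p$-adic analyticity and Theorem \ref{LazardGroup}; both are valid.
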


\begin{proof}
Since $H$ is powerful, $H$ has a uniformly powerful open subgroup $H'$ (cf. \cite[Theorem 4.2]{DdSMS}). Let $K'$ be the subfield of $\mathcal{K}$ fixed by $H'$. Then $H'$ is a quotient of $G_{S'}(K')$ where $S'$ is the set of places of $K'$ lying over the places in $S$. By Proposition \ref{powerfulrank}, one has $d(H') \leq d(H) = 2$. Hence, the Tame Fontaine-Mazur conjecture-uniform version is true for $H'$, and $H'$ is trivial. Thus, $H$ is finite.
\end{proof}

\begin{remark}
For an odd prime $p$, the finite powerful $p$-groups with generator rank $2$ were classified in \cite[\S 2]{delasHerasGunnar}.
\end{remark}

\section{Powerfulness of $G_{\{\mathfrak{q}\}}(F)$}

 Let $F$ be a number field with a non-trivial cyclic $p$-class group, and let $\mathfrak{q}$ be a non-$p$-adic prime of $F$. In this section, we study the finitude of $G_{\{\mathfrak{q}\}}(F)$ by studying the powerfulness of $G_{\{\mathfrak{q}\}}(F)$. Since $G_{\{\mathfrak{q}\}}(F)$ is well-known to be finite cyclic if $d(G_{\{\mathfrak{q}\}}(F))=1$, we assume that $d(G_{\{\mathfrak{q}\}}(F))=2$ throughout this work. Therefore, $F$ must have a degree $p$ cyclic extension that is ramified precisely at $\mathfrak{q}$; since the $p$-class group of $F$ is cyclic, there are exactly $p$ extensions of $F$ that satisfy this condition.

First of all, if $\mathfrak{q}$ does not split in the $p$-class field tower of $F$, then we obtain Theorem \ref{maintheorem1} as follows. 

\vskip 10pt

\begin{proof}[\textbf{Proof of Theorem \ref{maintheorem1}}]
Let $M$ be the fixed subfield of $F_{\{\mathfrak{q}\}}$ by the Frattini subgroup of $G_{\{\mathfrak{q}\}}(F)$. Then $\mathrm{Gal}(M/F)$ is isomorphic to $(\mathbb{Z}/p\mathbb{Z})^{2}$, and $\mathfrak{q}$ is ramified in $M$ with the ramification index $p$. Since $\mathfrak{q}$ does not split in the $p$-class field tower, the residue class degree of $\mathfrak{q}$ in $M$ is $p$. Therefore, $\mathrm{Gal}(M/F)$ is equal to the decomposition subgroup at the unique prime of $M$ over $\mathfrak{q}$. By the Burnside basis theorem \cite[Theorem 4.10]{Koch}, for a place $\mathfrak{q}'$ of $F_{\{\mathfrak{q}\}}$ above $\mathfrak{q}$, the decomposition subgroup of $G_{\{\mathfrak{q}\}}(F)$ at $\mathfrak{q}'$ is equal to $G_{\{\mathfrak{q}\}}(F)$. Therefore, $\mathfrak{q}$ does not split in $F_{\{\mathfrak{q}\}}$, and $G_{\{\mathfrak{q}\}}(F)$ is isomorphic to a quotient of the Galois group $\mathrm{Gal}(\overline{F_{\mathfrak{q}}}/F_{\mathfrak{q}})$ of the maximal pro-$p$ extension $\overline{F_{\mathfrak{q}}}$ of the completion $F_{\mathfrak{q}}$ of $F$ at $\mathfrak{q}$. Since $\mathfrak{q}$ is prime to $p$, $\mathrm{Gal}(\overline{F_{\mathfrak{q}}}/F_{\mathfrak{q}})$ is a pro-$p$ Demushkin group of generator rank $2$ \cite[\S 10.1]{Koch}. Since the Demushkin groups of generator rank $2$ are powerful, $G_{\{\mathfrak{q}\}}(F)$ is powerful by Lemma \ref{quotient}. The theorem follows from Proposition \ref{powerfulTFM}.
\end{proof}

Now let us focus on the case when $\mathfrak{q}$ splits in the $p$-class field tower of $F$. From now on, we use the definition of powerfulness in our arguments. Therefore, in the rest of this work, we only consider odd primes $p$. We first have the following group-theoretic observations in Proposition \ref{Zelmanovtheorem} and Lemma \ref{nonabelian2pgroup}.

\begin{proposition}\label{Zelmanovtheorem}
The quotient $G_{\{\mathfrak{q}\}}(F)/\overline{G_{\{\mathfrak{q}\}}(F)^{p}}$ is a finite $p$-group.
\end{proposition}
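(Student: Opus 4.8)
The plan is to identify the quotient $Q := G_{\{\mathfrak{q}\}}(F)/\overline{G_{\{\mathfrak{q}\}}(F)^{p}}$ as a topologically finitely generated pro-$p$ group of exponent dividing $p$, and then to invoke Zelmanov's affirmative solution of the restricted Burnside problem, which bounds the order of a finite $p$-group of exponent $p$ in terms of its number of generators.

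First I would record the elementary structural facts. Writing $G = G_{\{\mathfrak{q}\}}(F)$, the subgroup $G^{p}$ generated by $p$th powers is normal, since the set of $p$th powers is stable under conjugation; hence $\overline{G^{p}}$ is a closed normal subgroup and $Q = G/\overline{G^{p}}$ is again a pro-$p$ group, topologically generated by the images of the topological generators of $G$ (there are only two, by our standing assumption $d(G)=2$, but all that matters is that $d(G)<\infty$). For every $g \in G$ one has $g^{p}\in G^{p}\subseteq\overline{G^{p}}$, so every element of $Q$ has order dividing $p$; in particular, for every open normal subgroup $N$ of $Q$, the finite quotient $Q/N$ is a finite $p$-group of exponent dividing $p$ generated by at most $d(G)$ elements.

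The decisive input is then Zelmanov's theorem: by the positive answer to the restricted Burnside problem, there is a constant $B = B(d(G),p)$ so that every finite group generated by $d(G)$ elements and of exponent dividing $p$ has order at most $B$. Applying this to the family $\{Q/N\}_{N}$ gives $|Q/N|\le B$ uniformly in $N$, and a routine profinite argument finishes the proof: choosing an open normal $N_{0}$ with $|Q/N_{0}|$ maximal, one checks that for every open normal $N$ the surjection $Q/(N\cap N_{0})\twoheadrightarrow Q/N_{0}$ is an isomorphism (it is onto, and $|Q/(N\cap N_{0})|\le|Q/N_{0}|$ by maximality), so $N_{0}\subseteq N$; since the open normal subgroups of $Q$ intersect trivially, $N_{0}=1$ and $Q = Q/N_{0}$ is finite.

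The only genuine obstacle is the appeal to Zelmanov's theorem; every other step is soft. It is worth emphasizing that, although $Q$ need not be finitely generated as an \emph{abstract} group, one applies the restricted Burnside bound only to the finite quotients $Q/N$, where being finitely generated is automatic, so no issue arises there. I would also note that this argument uses nothing about the arithmetic of $F$ or of $\mathfrak{q}$ beyond the finiteness of $d(G)$; the number-theoretic content enters only in the subsequent analysis of when $G_{\{\mathfrak{q}\}}(F)$ is actually powerful.
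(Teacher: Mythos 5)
Your proof is correct and follows essentially the same route as the paper: view the quotient as an inverse limit of its finite continuous quotients, note that each is a $p$-group of exponent dividing $p$ on at most $d(G_{\{\mathfrak{q}\}}(F))$ generators, invoke Zel$'$manov's solution of the restricted Burnside problem for a uniform order bound, and conclude finiteness by taking a quotient of maximal order. Your write-up in fact spells out the final maximality step (that the maximal finite quotient already equals the whole quotient) in slightly more detail than the paper does.
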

\begin{proof}
Let
\begin{equation*}
G_{\{\mathfrak{q}\}}(F)/\overline{G_{\{\mathfrak{q}\}}(F)^{p}} \simeq \underset{U}{\varprojlim} \, G_{\{\mathfrak{q}\}}(F)/U
\end{equation*}
be the canonical isomorphism where the inverse limit is over all the open normal subgroups $U$ of $G_{\{\mathfrak{q}\}}(F)$ containing $\overline{G_{\{\mathfrak{q}\}}(F)^{p}}$. Each quotient $G_{\{\mathfrak{q}\}}(F)/U$ of $G_{\{\mathfrak{q}\}}(F)/\overline{G_{\{\mathfrak{q}\}}(F)^{p}}$ has generator rank at most two and exponent $p$. By the theorem of Zel$'$manov on the restricted Burnside problem \cite{Zelmanov}, for two fixed natural numbers $r$ and $s$, the orders of finite groups with  $r$ generators and exponent $s$ are uniformly bounded. (We note that Kostrikin settled the case where $s$ is a prime number in the 1950s \cite{Kostrikin}.) Therefore, there is an open normal subgroup $U'$ of $G_{\{\mathfrak{q}\}}(F)$ containing $\overline{G_{\{\mathfrak{q}\}}(F)}$ such that the order of $G_{\{\mathfrak{q}\}}(F)/U'$ is maximal. By the maximality, $G_{\{\mathfrak{q}\}}(F)/U'$ is equal to $G_{\{\mathfrak{q}\}}(F)/\overline{G_{\{\mathfrak{q}\}}(F)^{p}}$.
\end{proof}

Even though there are only finitely many $p$-groups with generator rank two and exponent $p$, the classification of those groups is still out of reach \cite{Burnsideproblem}. Therefore, it is hard to compute $G_{\{\mathfrak{q}\}}(F)/\overline{G_{\{\mathfrak{q}\}}(F)^{p}}$. Instead, we can use the following fact.

\begin{lemma}\label{nonabelian2pgroup}
Let $G$ be a finite $p$-group with generator rank two and exponent $p$. If $G$ is not abelian, then the quotient $G/[[G,G],G]$ is isomorphic to the Heisenberg group $H_{p}$ of order $p^{3}$.
\end{lemma}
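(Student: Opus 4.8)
The plan is to work inside the finite $p$-group $G$ with $d(G)=2$ and exponent $p$, and analyze the quotient $\bar G := G/[[G,G],G]$, which is nilpotent of class at most $2$. First I would record the basic structure: since $\bar G$ is generated by two elements, say the images $\bar x, \bar y$ of generators $x,y$ of $G$, and has nilpotency class $\leq 2$, the commutator subgroup $[\bar G,\bar G]$ is central and generated by the single element $\bar c := [\bar x,\bar y]$; indeed for a class-$2$ group the commutator map is bilinear modulo the center, so every commutator is a power of $[\bar x,\bar y]$. Because $G$ (hence $\bar G$) is non-abelian, $\bar c \neq 1$. Next I would show $\bar c$ has order exactly $p$: it has order dividing $p$ since $G$ has exponent $p$, and it is non-trivial, so $\langle \bar c\rangle \cong \mathbb{Z}/p\mathbb{Z}$. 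Thus $[\bar G,\bar G] = Z(\bar G) \cap [\bar G,\bar G]$ is cyclic of order $p$ (centrality from class $\leq 2$).

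The second step is to pin down $|\bar G| = p^3$. We have the central extension
\begin{equation*}
1 \longrightarrow [\bar G,\bar G] \longrightarrow \bar G \longrightarrow \bar G/[\bar G,\bar G] \longrightarrow 1,
\end{equation*}
where $[\bar G,\bar G]\cong \mathbb{Z}/p\mathbb{Z}$ and $\bar G/[\bar G,\bar G]$ is abelian of exponent $p$ generated by two elements, hence is a quotient of $(\mathbb{Z}/p\mathbb{Z})^2$. Since $d(\bar G) = d(G) = 2$ (the Frattini quotient is unchanged by passing to $\bar G$, as $[[G,G],G] \subseteq \Phi(G)$), the abelianization $\bar G/[\bar G,\bar G]$ must have rank exactly $2$, so it equals $(\mathbb{Z}/p\mathbb{Z})^2$. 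Therefore $|\bar G| = p \cdot p^2 = p^3$.

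The third step identifies $\bar G$ with $H_p$. Among $p$-groups of order $p^3$ and exponent $p$ (for $p$ odd) there is, up to isomorphism, a unique non-abelian one, namely the Heisenberg group $H_p$ of upper-triangular unipotent $3\times 3$ matrices over $\mathbb{F}_p$; I would cite the standard classification of groups of order $p^3$, or argue directly: $\bar G$ is non-abelian of order $p^3$ with $Z(\bar G) = [\bar G,\bar G]$ of order $p$ and exponent $p$, and any such group admits a presentation $\langle a,b \mid a^p=b^p=[a,b]^p=1,\ [a,[a,b]]=[b,[a,b]]=1\rangle$, which is exactly the presentation of $H_p$; mapping $\bar x \mapsto a$, $\bar y \mapsto b$ gives a surjection that is an isomorphism by order count.

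I expect the only subtle point to be the claim that $[\bar G, \bar G]$ is \emph{exactly} cyclic of order $p$ rather than possibly trivial or larger: triviality is ruled out precisely by the non-abelian hypothesis on $G$ (if $[[G,G],G]$ absorbed all of $[G,G]$, then $[G,G]$ would be central but one still needs $[G,G]\neq [[G,G],G]$, which follows because a non-abelian group of exponent $p$ and $d=2$ cannot have $[G,G]=[[G,G],G]$ — otherwise $[G,G]$ would be a non-trivial perfect, hence trivial, finite $p$-group), and "larger than $p$" is impossible because exponent $p$ bounds the order of every element. Carefully justifying that $[\bar G,\bar G]\neq 1$, i.e. that $G$ being non-abelian forces $\bar G$ non-abelian, is where I would be most careful: it amounts to showing $[G,G] \neq [[G,G],G]$, which holds since $[G,G]$ is a non-trivial finite $p$-group and a finite $p$-group never equals its own commutator subgroup (nor, more to the point here, its lower-central-series successor) unless it is trivial.
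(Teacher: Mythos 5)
Your proof is correct, and its skeleton matches the paper's: show $[G,G]/[[G,G],G]$ is cyclic of order exactly $p$ in the non-abelian case, deduce that $G/[[G,G],G]$ has order $p^{3}$, and identify it with $H_{p}$. The difference is in how the two key steps are justified. The paper gets cyclicity of $[G,G]/[[G,G],G]$ by mapping down from a free group $\mathfrak{F}$ of rank two and citing the fact (Witt) that $[\mathfrak{F},\mathfrak{F}]/[[\mathfrak{F},\mathfrak{F}],\mathfrak{F}]\simeq\mathbb{Z}$, whereas you work directly in the class-$\leq 2$ quotient $\bar G$ and use bilinearity of the commutator there to see that $[\bar G,\bar G]=\langle[\bar x,\bar y]\rangle$; these are equivalent facts, but your version is more elementary and self-contained, while the paper's is a one-line citation that also makes transparent why the index is $1$ or $p$. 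Both arguments rule out $[G,G]=[[G,G],G]$ by nilpotency, exactly as you do. For the final identification the paper simply cites the classification of groups of order $p^{3}$; you offer that citation as well as a direct presentation-plus-order-count argument, which buys independence from the classification at the cost of a little care: the well-defined map goes from the group presented by $\langle a,b\mid a^{p}=b^{p}=[a,b]^{p}=1,\ [a,[a,b]]=[b,[a,b]]=1\rangle$ onto $\bar G$ (since $\bar x,\bar y$ satisfy those relations), not from $\bar G$ to $H_p$ as you phrased it; with that direction fixed, the order count closes the argument. No genuine gap.
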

\begin{proof}
Since $G$ is finite, there is a surjective homomorphism $\mathfrak{F} \to G$ from a free group $\mathfrak{F}$ with two generators to $G$. Therefore, there is a surjection $[\mathfrak{F},\mathfrak{F}]/[[\mathfrak{F},\mathfrak{F}],\mathfrak{F}] \to [G,G]/[[G,G],G]$. It is well-known that $[\mathfrak{F},\mathfrak{F}]/[[\mathfrak{F},\mathfrak{F}],\mathfrak{F}]$ is isomorphic to $\mathbb{Z}$ (cf. \cite{Witt}). Hence, we have either $[G,G]=[[G,G],G]$ or $([G,G] : [[G,G],G])=p$. Since a $p$-group is nilpotent, $G$ is abelian if and only if $[G,G]=[[G,G],G]$. In conclusion, $G$ is non-abelian if and only if $G/[[G,G],G]$ has order $p^{3}$. From the classification of $p$-groups of order $p^{3}$ (cf. \cite[\S 5.3.8]{p3}), in that case $G/[[G,G],G]$ is the Heisenberg group of order $p^{3}$.
\end{proof}

By Proposition \ref{Zelmanovtheorem} and Lemma \ref{nonabelian2pgroup}, if $G_{\{\mathfrak{q}\}}(F)$ is not powerful, then $G_{\{\mathfrak{q}\}}(F)$ has $H_{p}$ as its quotient. Let $L$ be a subfield of $F_{\{\mathfrak{q}\}}$ containing $F$ such that one has $\mathrm{Gal}(L/F) \simeq H_{p}$. Let $M$ be the fixed subfield of the Frattini subgroup of $G_{\{\mathfrak{q}\}}(F)$. Then, we have the following field diagram
\begin{equation*}
\begin{tikzcd}
                                                                                                                                 & L                                                                          \\
                                                                                                                                 & M \arrow[u, no head] \arrow[u, "{[H_{p},H_{p}]}"', no head, bend right=60] \\
F_{1} \arrow[ru, no head] \arrow[ruu, "\simeq (\mathbb{Z}/p\mathbb{Z})^{2}" description, no head, bend left=49] &                                                                            \\
F \arrow[u, no head] \arrow[ruu, "\simeq (\mathbb{Z}/p\mathbb{Z})^{2}" description, no head, bend right=49]     &                                                                           
\end{tikzcd}
\end{equation*}

\begin{lemma}\label{lemma-grouptheory}
Let $L$ and $\mathfrak{q}$ be as above. Let us fix a prime of $L$ above $\mathfrak{q}$ and write $D$ and $T$ for the decomposition subgroup and the inertia subgroup of $\mathrm{Gal}(L/F)$ at the prime, respectively. Then the following are true :
\begin{enumerate}
    \item We have $D \subseteq \mathrm{Gal}(L/F_{1})$;
    \item The subgroup $T$ is not normal in $\mathrm{Gal}(L/F)$ and has order $p$.
\end{enumerate}
\end{lemma}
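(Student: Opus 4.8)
The plan is to exploit the concrete structure of $\mathrm{Gal}(L/F) \simeq H_p$, together with the arithmetic constraints coming from the fact that $L$ is unramified outside $\mathfrak{q}$ and that $F_1/F$ is unramified. First I would recall the structure of $H_p$: its center $Z = [H_p, H_p]$ is cyclic of order $p$, the quotient $H_p/Z \simeq (\mathbb{Z}/p\mathbb{Z})^2$, and $H_p$ has exponent $p$; every subgroup of order $p^2$ is normal (being of index $p$) and abelian, while the subgroups of order $p$ are the $p+1$ ``lines'' in $H_p/Z$ pulled back, of which exactly one (namely $Z$ itself) is normal, and the remaining $p$ are non-normal. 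In the field diagram, $\mathrm{Gal}(L/F_1)$ is the unique subgroup of order $p^2$ that is \emph{not} $\mathrm{Gal}(L/M)=Z$ as a complement-type subgroup — more precisely, $\mathrm{Gal}(L/F_1)$ is an index-$p$ (hence normal, abelian) subgroup of $H_p$ containing $Z$, corresponding to the line in $H_p/Z$ cut out by the unramified extension $F_1/F$.

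For part (1), the key point is that $\mathfrak{q}$ splits completely in $F_1/F$: since $F_1$ lies in the $p$-class field tower of $F$ and $\mathfrak{q}$ is assumed to split in that tower, $\mathfrak{q}$ splits in $F_1$. Hence the decomposition subgroup $D$ of $\mathrm{Gal}(L/F)$ at our chosen prime acts trivially on the residue and ramification data of $F_1/F$, which forces $D$ to be contained in $\mathrm{Gal}(L/F_1)$; equivalently, the image of $D$ in $\mathrm{Gal}(F_1/F)$ is the decomposition group there, which is trivial because $\mathfrak{q}$ splits completely in $F_1$. This gives $D \subseteq \mathrm{Gal}(L/F_1)$.

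For part (2), I would argue as follows. First, $T \neq 1$: the extension $L/F$ is ramified at $\mathfrak{q}$ (indeed $M/F$ already is, with ramification index $p$, as in the proof of Theorem \ref{maintheorem1}). Since $\mathfrak{q}$ is non-$p$-adic, $T$ is cyclic of order prime to $p$ times a $p$-part; but $T \subseteq D \subseteq \mathrm{Gal}(L/F_1)$ which is a $p$-group, so $T$ is a cyclic $p$-group, and since $L/F_1$ has exponent-$p$-type structure — more carefully, since the tame inertia in a pro-$p$ extension of a non-$p$-adic local field is cyclic and here $\mathrm{Gal}(L/F_1)$ is killed by... — I would instead just note $T$ injects into $\mathrm{Gal}(L/F_1) \subseteq H_p$, and the order of $T$ divides $p^2$; if $|T| = p^2$ then $T = \mathrm{Gal}(L/F_1)$ would be normal and $\mathfrak{q}$ totally ramified in $L/F_1$, but $\mathfrak{q}$ splits in $F_1$'s class field tower and the residue degree considerations, or rather the fact that $\mathrm{Gal}(L/F_1)$ is non-cyclic while tame inertia is cyclic, rules this out; so $|T| = p$. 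Finally, to see $T$ is not normal in $\mathrm{Gal}(L/F) = H_p$: if $T$ were normal it would have to be the center $Z = [H_p,H_p]$ (the unique normal subgroup of order $p$), i.e. $T = \mathrm{Gal}(L/M)$. But $T \subseteq D$ and by part (1) $D$ — together with $T$ — lies in $\mathrm{Gal}(L/F_1)$, which is \emph{disjoint} from $Z$ only if... no: $Z \subseteq \mathrm{Gal}(L/F_1)$, so this inclusion alone does not give a contradiction, and here is where I expect the real obstacle to lie. The correct contradiction must instead come from arithmetic: if $T = \mathrm{Gal}(L/M) = Z$, then $\mathfrak{q}$ would be unramified in $M/F$, contradicting the fact established in the Theorem \ref{maintheorem1} argument that $\mathfrak{q}$ is ramified in $M$ with index $p$. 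Hence $T \neq Z$, so $T$ is one of the $p$ non-normal subgroups of order $p$, proving (2).

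The main obstacle I anticipate is pinning down precisely why $T$ cannot equal the center without circularity: one must carefully track that the ramification of $\mathfrak{q}$ in the \emph{elementary abelian} layer $M/F$ is nontrivial (this is where $d(G_{\{\mathfrak{q}\}}(F)) = 2$, i.e. the existence of a degree-$p$ cyclic extension of $F$ ramified exactly at $\mathfrak{q}$, gets used), and to separate this from the splitting behavior of $\mathfrak{q}$ in the unramified part $F_1/F$. Once the ramification in $M/F$ is isolated as "coming from the non-$F_1$ direction" in $H_p/Z \simeq (\mathbb{Z}/p\mathbb{Z})^2$, parts (1) and (2) both follow from the subgroup structure of $H_p$.
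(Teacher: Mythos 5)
Your proposal is correct and follows essentially the same route as the paper: part (1) comes from the splitting of $\mathfrak{q}$ in $F_{1}$, and part (2) from the cyclicity of tame inertia together with the subgroup structure of $H_{p}$. The only (harmless) variations are that you bound $\sharp T \leq p$ via the non-cyclicity of $\mathrm{Gal}(L/F_{1}) \simeq (\mathbb{Z}/p\mathbb{Z})^{2}$ rather than via the exponent-$p$ property, and you rule out normality of $T$ by observing that $T=\mathrm{Gal}(L/M)$ would force $\mathfrak{q}$ to be unramified in $M$ (contradicting the ramification index $p$ coming from $d(G_{\{\mathfrak{q}\}}(F))=2$), while the paper instead notes that the fixed field of a normal $T$ would be an unramified abelian $(\mathbb{Z}/p\mathbb{Z})^{2}$-extension of $F$, contradicting the cyclic $p$-class group — two equivalent ways to the same contradiction.
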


\begin{proof}
The first statement follows because $\mathfrak{q}$ splits in $F_{1}$. Since $\mathfrak{q}$ is prime to $p$, $T$ is cyclic by the class field theory. Since $\mathrm{Gal}(L/F)$ has exponent $p$, $T$ cannot have order larger than $p$. Hence, we have $\sharp T=p$. If $T$ is a normal subgroup of $\mathrm{Gal}(L/F)$, then its fixed subfield is an unramified abelian extension of $F$ with degree $p^{2}$. We have a contradiction by the assumption on the $p$-class field tower of $F$ and the exponent of $\mathrm{Gal}(L/F)$.
\end{proof}

We can obtain the following necessary conditions for the powerfulness of $G_{\{\mathfrak{q}\}}(F)$ from Lemma \ref{lemma-grouptheory}.

\begin{proposition}\label{G_S/G_S^{p}}
Suppose that $G_{\{ \mathfrak{q}\}}(F)/\overline{G_{\{\mathfrak{q}\}}(F)^{p}}$ is not abelian. Let $L$ and $M$ be the subfields of $F_{\{\mathfrak{q}\}}$ as above. Then the following are true :
\begin{enumerate}
    \item The extension $L/M$ is unramified;
    \item Let $\mathfrak{q}_{1},\ldots, \mathfrak{q}_{p}$ be the primes of $F_{1}$ above $\mathfrak{q}$. Then for each $1 \leq i \leq p$, $F_{1}$ has a degree $p$ cyclic extension that is precisely ramified at the set $\{ \mathfrak{q}_j \, | \, 1 \leq j \leq p , j \neq i \}$
\end{enumerate}
\end{proposition}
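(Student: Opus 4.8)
The plan is to argue entirely inside the Heisenberg quotient $\Gamma := \mathrm{Gal}(L/F) \simeq H_p$, using the elementary structure of $H_p$ for $p$ odd: $[H_p,H_p]$ coincides with the centre $Z$, has order $p$, and equals $\mathrm{Gal}(L/M)$; and (a routine check shows) every index-$p$ subgroup of $H_p$ is elementary abelian of rank $2$ and contains $Z$, so $V := \mathrm{Gal}(L/F_1) \simeq (\mathbb{Z}/p\mathbb{Z})^2$ and $Z \subseteq V$. I would begin by collecting the inertia-theoretic input from Lemma~\ref{lemma-grouptheory}: writing $T$ for the inertia subgroup of $\Gamma$ at a fixed prime $\mathfrak{Q}$ of $L$ above $\mathfrak{q}$, one has $\sharp T = p$, $T$ is not normal in $\Gamma$, and $T \subseteq V$ (as $T$ lies in the decomposition subgroup $D \subseteq V$). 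Since $Z \trianglelefteq \Gamma$ while $T$ is not normal, $T \ne Z$; as $\sharp T = \sharp Z = p$, this gives $T \cap Z = 1$, and the same holds for every $\Gamma$-conjugate of $T$, i.e.\ for the inertia subgroup of every prime of $L$ above $\mathfrak{q}$.

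For part (1): $L/M$ is Galois (since $Z \trianglelefteq \Gamma$), and the inertia subgroup in $L/M$ of a prime $\mathfrak{Q}'$ of $L$ equals $I(\mathfrak{Q}'\mid F) \cap \mathrm{Gal}(L/M) = I(\mathfrak{Q}'\mid F) \cap Z$. For $\mathfrak{Q}'$ above $\mathfrak{q}$ this is trivial by the previous paragraph, and $L/M$ is unramified outside the primes above $\mathfrak{q}$ since $F_{\{\mathfrak{q}\}}/F$ is; hence $L/M$ is unramified.

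For part (2): the degree-$p$ cyclic subextensions of $L/F_1$ are the fixed fields $L^{H'}$ with $H'$ ranging over the $p+1$ order-$p$ subgroups of $V \simeq (\mathbb{Z}/p\mathbb{Z})^2$; one is $M = L^{Z}$, and the remaining $p$ are $N_j := L^{T_j}$, where $T_1,\dots,T_p$ are the $\Gamma$-conjugates of $T$. There are exactly $p$ of these (as $T$ is non-normal of order $p$ with $N_\Gamma(T) = TZ$ of index $p$), and, since each meets $Z$ trivially, they are precisely the $p$ order-$p$ subgroups of $V$ other than $Z$. The crux is the claim that the inertia subgroups in $L/F_1$ of the $p$ primes $\mathfrak{q}_1,\dots,\mathfrak{q}_p$ are pairwise distinct, hence equal to $\{T_1,\dots,T_p\}$ after relabelling. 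To prove this I would use that $\mathfrak{q}$ splits completely in $F_1/F$ (again because $D \subseteq V$ forces the decomposition group of $\mathfrak{q}$ in $F_1/F$ to be trivial), so that a generator $\sigma$ of $\mathrm{Gal}(F_1/F)$ permutes $\mathfrak{q}_1,\dots,\mathfrak{q}_p$ cyclically; lifting $\sigma$ to $\tilde\sigma \in \Gamma$ (possible since $\Gamma \twoheadrightarrow \mathrm{Gal}(F_1/F)$) and choosing the primes $\mathfrak{Q}_i$ of $L$ above $\mathfrak{q}_i$ as $\tilde\sigma$-translates of a fixed $\mathfrak{Q}_1$ with inertia $T$, the inertia subgroups of $\mathfrak{q}_1,\dots,\mathfrak{q}_p$ in $L/F_1$ become $T,\ \tilde\sigma T\tilde\sigma^{-1},\ \dots,\ \tilde\sigma^{p-1} T \tilde\sigma^{-(p-1)}$; these are pairwise distinct because $\tilde\sigma^{\,k} \notin V \supseteq N_\Gamma(T)$ for $1 \le k \le p-1$. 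Relabelling so that $\mathfrak{q}_j$ has inertia $T_j$ in $L/F_1$, the prime $\mathfrak{q}_k$ is unramified in $N_j/F_1$ iff the image of $T_k$ in $V/T_j = \mathrm{Gal}(N_j/F_1)$ is trivial, iff $T_k = T_j$, iff $k = j$; and $N_j/F_1$ is unramified outside the primes above $\mathfrak{q}$ because $N_j \subseteq F_{\{\mathfrak{q}\}}$ and $F_1/F$ is unramified. Hence $N_j/F_1$ is precisely ramified at $\{\mathfrak{q}_k : k \ne j\}$, and letting $j$ run over $1,\dots,p$ yields (2).

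The step I expect to be the main obstacle is the distinctness of the inertia subgroups $T_1,\dots,T_p$ in $L/F_1$: the rest is bookkeeping with the structure of $H_p$ and the standard formula for inertia in a subextension, but this distinctness is what forces the fields $N_1,\dots,N_p$ to realize all $p$ ramification patterns ``ramified at every $\mathfrak{q}_k$ except one'' rather than collapsing to fewer. I would take care to justify that the inertia subgroups of the $\mathfrak{q}_i$ genuinely form a single $\tilde\sigma$-orbit $\{\tilde\sigma^{\,i} T \tilde\sigma^{-i} : 0 \le i \le p-1\}$ --- equivalently, that the primes $\mathfrak{Q}_i$ above $\mathfrak{q}_i$ can be chosen as $\tilde\sigma$-translates of a fixed $\mathfrak{Q}_1$ --- since this compatible choice of primes is exactly what converts the transitive action of $\mathrm{Gal}(F_1/F)$ on $\{\mathfrak{q}_1,\dots,\mathfrak{q}_p\}$ into the conjugation orbit of $T$ in $\Gamma$.
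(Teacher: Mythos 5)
Your proof is correct and follows essentially the same route as the paper: you identify the conjugates of $T$ as the inertia subgroups of the primes above $\mathfrak{q}$ inside $\mathrm{Gal}(L/F_{1})\simeq(\mathbb{Z}/p\mathbb{Z})^{2}$, note they meet $Z=\mathrm{Gal}(L/M)$ trivially to get (1), and obtain (2) by showing the inertia subgroups $T_{1},\dots,T_{p}$ at $\mathfrak{q}_{1},\dots,\mathfrak{q}_{p}$ are the $p$ distinct conjugates and taking their fixed fields. Your explicit $\tilde{\sigma}$-translate argument merely spells out the distinctness step that the paper asserts more tersely, so the two proofs coincide in substance.
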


\begin{proof}
\begin{enumerate}
    \item Let $T$ be the subgroup of $\mathrm{Gal}(L/F)$ in Lemma \ref{lemma-grouptheory}. The subgroups of $\mathrm{Gal}(L/F)$ conjugate to $T$ are the inertia subgroups at the primes of $L$ above $\mathfrak{q}_{1},\ldots, \mathfrak{q}_{p}$. They are $p$ cyclic subgroups of $\mathrm{Gal}(L/F_{1}) \simeq (\mathbb{Z}/p\mathbb{Z})^{2}$. The remaining cyclic subgroup of $\mathrm{Gal}(L/F_{1})$ is normal in $\mathrm{Gal}(L/F)$, and it is equal to the commutator subgroup $[\mathrm{Gal}(L/F), \mathrm{Gal}(L/F)]$ whose fixed field is $M$. Since $L/F$ is unramified outside $\mathfrak{q}$ and $\mathrm{Gal}(L/M)$ intersects trivially with all the conjugates of $T$, $L/M$ is an unramified extension.
   \item For each $\mathfrak{q}_{i}$ with $1 \leq i \leq p$, the inertia subgroup $T_{i}$ of $\mathrm{Gal}(L/F_{1})$ at $\mathfrak{q}_{i}$ is conjugate to $T$ in $\mathrm{Gal}(L/F)$. Since the conjugates of $T$ are all distinct, the fixed subfield of $T_{i}$ is ramified over $F_{1}$ precisely at the set $\{ \mathfrak{q}_j \, | \, 1 \leq j \leq p , j \neq i \}$
\end{enumerate}
\end{proof}

\begin{corollary}\label{pclassnumber}
Let $M$ be as above. Then, $G_{\{\mathfrak{q}\}}(F)$ is powerful if the $p$-class number of $M$ is $1$.
\end{corollary}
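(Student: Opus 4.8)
The plan is to argue by contraposition: assuming $G_{\{\mathfrak{q}\}}(F)$ is not powerful, I would produce an unramified cyclic degree-$p$ extension of $M$, which forces $p$ to divide the class number of $M$ and hence shows that the $p$-class number of $M$ is not $1$.

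First I would invoke the discussion following Lemma \ref{nonabelian2pgroup}. By Proposition \ref{Zelmanovtheorem} the quotient $G_{\{\mathfrak{q}\}}(F)/\overline{G_{\{\mathfrak{q}\}}(F)^{p}}$ is a finite $p$-group of generator rank two and exponent $p$; if $G_{\{\mathfrak{q}\}}(F)$ is not powerful this quotient is non-abelian, so by Lemma \ref{nonabelian2pgroup} it has the Heisenberg group $H_{p}$ as a quotient. Hence $G_{\{\mathfrak{q}\}}(F)$ itself surjects onto $H_{p}$, giving a subfield $L \subseteq F_{\{\mathfrak{q}\}}$ with $\mathrm{Gal}(L/F) \simeq H_{p}$ as in the field diagram preceding Lemma \ref{lemma-grouptheory}. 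Since $p$ is odd and $H_{p}$ has exponent $p$, its Frattini subgroup is $\Phi(H_{p}) = [H_{p},H_{p}]$, which is cyclic of order $p$. Consequently the field $M$ of the statement — the fixed field of the Frattini subgroup of $G_{\{\mathfrak{q}\}}(F)$ — is contained in $L$, and $\mathrm{Gal}(L/M) = [H_{p},H_{p}]$ is cyclic of order $p$.

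Next I would apply Proposition \ref{G_S/G_S^{p}}(1), which asserts precisely that $L/M$ is unramified. Combined with the previous step, $L/M$ is an unramified abelian extension of degree $p$, so by class field theory the $p$-part of the class group of $M$ surjects onto $\mathbb{Z}/p\mathbb{Z}$, hence is non-trivial; equivalently, the $p$-class number of $M$ is divisible by $p$, and in particular is not $1$. Taking the contrapositive yields the corollary.

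I do not expect a serious obstacle: the corollary is essentially a repackaging of Propositions \ref{Zelmanovtheorem} and \ref{G_S/G_S^{p}}. The only points requiring a line of care are that $G_{\{\mathfrak{q}\}}(F)$ itself (not merely its maximal exponent-$p$ quotient) surjects onto $H_{p}$, and that the Frattini field $M$ lies strictly below $L$ with $\mathrm{Gal}(L/M)$ of order exactly $p$; both follow from elementary bookkeeping with Frattini quotients, and the latter is already recorded in the field diagram in the text.
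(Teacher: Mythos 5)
Your proof is correct and follows exactly the route the paper intends: the corollary is the contrapositive of Proposition \ref{G_S/G_S^{p}}(1), since non-powerfulness of $G_{\{\mathfrak{q}\}}(F)$ (for odd $p$) means $G_{\{\mathfrak{q}\}}(F)/\overline{G_{\{\mathfrak{q}\}}(F)^{p}}$ is non-abelian, producing via Lemma \ref{nonabelian2pgroup} the field $L$ with $\mathrm{Gal}(L/F)\simeq H_{p}$ and the unramified degree-$p$ extension $L/M$, which forces $p$ to divide the class number of $M$. Your Frattini bookkeeping (that $d(G)=d(H_{p})=2$ forces the kernel of $G\twoheadrightarrow H_{p}$ into $\Phi(G)$, so $M\subset L$ with $\mathrm{Gal}(L/M)=[H_{p},H_{p}]$ of order $p$) is exactly what the paper's field diagram records implicitly, so there is nothing to add.
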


It is difficult to use Corollary \ref{pclassnumber} in practice. Instead, in the next section, we use the following necessary condition.

\begin{corollary}\label{criterionramification}
The Galois group $G_{\{ \mathfrak{q}\}}(F)$ is powerful if for some $1 \leq i \leq p$, $F_{1}$ does not admit a degree $p$ cyclic extension which is ramified precisely at the set $\{ \mathfrak{q}_j \, | \, 1 \leq j \leq p , j \neq i \}$.
\end{corollary}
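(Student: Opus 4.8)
The plan is to prove the contrapositive and reduce everything to Proposition~\ref{G_S/G_S^{p}}. By Definition~\ref{powerful}, saying that $G_{\{\mathfrak{q}\}}(F)$ is powerful is the \emph{same} as saying that the quotient $G_{\{\mathfrak{q}\}}(F)/\overline{G_{\{\mathfrak{q}\}}(F)^{p}}$ is abelian. So it suffices to establish the implication: if $G_{\{\mathfrak{q}\}}(F)/\overline{G_{\{\mathfrak{q}\}}(F)^{p}}$ is \emph{not} abelian, then for \emph{every} index $1 \le i \le p$ the field $F_{1}$ admits a degree-$p$ cyclic extension ramified precisely at $\{\mathfrak{q}_j \mid 1 \le j \le p,\ j \ne i\}$. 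Negating both sides then gives exactly the statement of the corollary: a single failing index $i$ forces $G_{\{\mathfrak{q}\}}(F)/\overline{G_{\{\mathfrak{q}\}}(F)^{p}}$ to be abelian, i.e. $G_{\{\mathfrak{q}\}}(F)$ to be powerful.

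To set up the implication I would string together the structural facts already in place. Assume $G_{\{\mathfrak{q}\}}(F)/\overline{G_{\{\mathfrak{q}\}}(F)^{p}}$ is non-abelian. It is a finite $p$-group by Proposition~\ref{Zelmanovtheorem}; since $\overline{G_{\{\mathfrak{q}\}}(F)^{p}}$ is contained in the Frattini subgroup and $d(G_{\{\mathfrak{q}\}}(F)) = 2$ by our standing assumption, it has generator rank two, and of course exponent $p$. Lemma~\ref{nonabelian2pgroup} then identifies its quotient by the third term of the lower central series with the Heisenberg group $H_p$ of order $p^3$, so $G_{\{\mathfrak{q}\}}(F)$ surjects onto $H_p$. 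Choosing a subfield $L \subseteq F_{\{\mathfrak{q}\}}$ with $\mathrm{Gal}(L/F) \simeq H_p$ and letting $M$ be the fixed field of the Frattini subgroup of $G_{\{\mathfrak{q}\}}(F)$ puts us exactly in the situation of the field diagram preceding Lemma~\ref{lemma-grouptheory}, hence in the hypotheses of Proposition~\ref{G_S/G_S^{p}}. Applying part~(2) of that proposition yields, for each $1 \le i \le p$, the desired degree-$p$ cyclic extension of $F_1$ (namely the fixed field of the inertia subgroup at the prime of $L$ above $\mathfrak{q}_i$), ramified precisely at the $p-1$ primes $\{\mathfrak{q}_j \mid j \ne i\}$.

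There is essentially no genuine obstacle at the level of the corollary itself: it is a formal consequence of Proposition~\ref{G_S/G_S^{p}}(2) once one remembers that powerfulness is by definition the abelianness of the mod-$p$ power quotient. The real content lives upstream, in Lemmas~\ref{nonabelian2pgroup} and~\ref{lemma-grouptheory} and Proposition~\ref{G_S/G_S^{p}}, where one must produce the Heisenberg quotient, locate the conjugates of the inertia group as $p$ distinct lines inside $\mathrm{Gal}(L/F_1) \simeq (\mathbb{Z}/p\mathbb{Z})^2$, and use the non-normality of $T$ to pin down which primes of $F_1$ are ramified. The only subtlety worth flagging in the write-up is the direction of the quantifier: Proposition~\ref{G_S/G_S^{p}}(2) produces the ramified extensions for all $i$ simultaneously, so assuming non-existence for just one value of $i$ in the hypothesis of the corollary is already enough to conclude powerfulness.
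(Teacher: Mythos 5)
Your proof is correct and matches the paper's (implicit) argument exactly: the corollary is the contrapositive of Proposition~\ref{G_S/G_S^{p}}(2), with the non-abelianness of $G_{\{\mathfrak{q}\}}(F)/\overline{G_{\{\mathfrak{q}\}}(F)^{p}}$ feeding into Proposition~\ref{Zelmanovtheorem} and Lemma~\ref{nonabelian2pgroup} to produce the field $L$ with $\mathrm{Gal}(L/F)\simeq H_{p}$. Nothing further is needed.
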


\section{An application of the Gras-Munnier theorem to the case when $\mathfrak{q}$ splits in $F_{1}$}

In this section, we study the powerfulness of $G_{\{\mathfrak{q}\}}(F)$ for $\mathfrak{q}$ which splits in $F_{1}$ by using Corollary \ref{criterionramification}. We effectively apply the theorem of Gras and Munnier.

\subsection{Gras-Munnier theorem}

Let $K$ be a number field, and let $S$ be a finite set of finite non-$p$-adic primes of $K$. The theorem of Gras and Munnier gives us a criterion for the existence of a cyclic extension of $K$ of degree $p$ which is ramified precisely at $S$. We assume that for each $v \in S$, its ideal norm is congruent to $1$ modulo $p$, because otherwise $v$ cannot be ramified in a pro-$p$ extension of $K$. For $K$ and $S$, let us define the multiplicative subgroup $V_{S}(K)$ of $K^{\times}$ by 
\begin{equation*}
    V_{S}(K) = \big \{ \,\, x \in K^{\times} \,\, | \,\,\, (x) = \mathfrak{a}^{p} \,\,\text{for a fractional ideal $\mathfrak{a}$ of $K$} \,\,\, \& \,\,\, x \in K_{v}^{\times \, p} \,\text{for all $v \in S$} \,\,\, \big \},
\end{equation*}
where $K_{v}$ denotes the completion of $K$ at the place $v$ (cf. \cite[\S 11.2]{Koch}).
\begin{definition}\label{governing}
Let $K$ be a number field. Let $\emptyset$ be the empty set of primes of $K$. The \textbf{governing field} of $K$ is the field $K(\zeta_{p}, \sqrt[p]{V_{\emptyset}(K)})$ obtained from $K$ by adjoining the primitive $p$th roots of unity and $p$th roots of elements of $V_{\emptyset}(K)$. We denote this field by $\mathrm{Gov}(K)$.
\end{definition}

Let us choose a prime $v'$ of $K(\zeta_{p})$ above $v$ for each prime $v \in S$. Since $\mathrm{Gal}(\mathrm{Gov}(K)/K(\zeta_{p}))$ is finite and abelian with exponent $p$, $\mathrm{Gal}(\mathrm{Gov}(K)/K(\zeta_{p}))$ is a finite dimensional vector space over $\mathbb{F}_{p}$. In \cite{GrasMunnier}, the authors proved the following theorem. We recommend \cite[Chapter V]{Gras} for a more comprehensive reference and \cite{HajirMaireRamakrishna} for a short proof of the theorem.

\begin{theorem}(Gras-Munnier)\label{GrasMunnier}
Let $S$ be a finite set of finite non-$p$-adic primes of $K$ whose ideal norms are congruent to $1$ modulo $p$. Then there is a cyclic extension of degree $p$ over $K$ which is ramified precisely at $S$ if and only if there is a relation
\begin{equation*}
   \underset{v \in S}{\prod}  \bigg ( \,\, \frac{\mathrm{Gov}(K)/K(\zeta_{p})}{v'} \,\, \bigg )^{a_{v}} = 1 \in \mathrm{Gal}(\mathrm{Gov}(K)/K(\zeta_{p}))
\end{equation*}
among the Frobenius automorphisms of $\mathrm{Gal}(\mathrm{Gov}(K)/K(\zeta_{p}))$ at $v'$ such that $a_{v} \in \mathbb{F}_{p}^{\times}$ for all $v \in S$.
\end{theorem}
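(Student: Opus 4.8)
Here is the plan.

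The plan is to translate the statement, via Kummer theory, into an equality of two $\mathbb{F}_p$-subspaces of $\bigoplus_{v\in S}\mathbb{F}_p$, and then to establish that equality by combining the power reciprocity law with a Poitou--Tate dimension count. I would work with the finite $\mathbb{F}_p$-vector space $\mathcal{X}_S$ of $\mathbb{Z}/p$-extensions of $K$ that are unramified outside $S$ (equivalently, the Selmer subgroup of $H^1(G_K,\mathbb{Z}/p)$ cut out by the unramified condition at every place not in $S$ and no condition at the places of $S$). Since $N(v)\equiv 1\pmod p$ for $v\in S$, the local cohomology at $S$ yields a natural ramification map $\rho\colon \mathcal{X}_S\to\bigoplus_{v\in S}\mathbb{F}_p$ whose kernel is the group $\mathcal{X}_\emptyset$ of everywhere-unramified $\mathbb{Z}/p$-extensions, and $L\in\mathcal{X}_S$ is ramified \emph{precisely} at $S$ exactly when $\rho(L)$ has all coordinates nonzero. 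On the other side, Kummer duality over $K(\zeta_p)$ gives $\mathrm{Gal}(\mathrm{Gov}(K)/K(\zeta_p))\cong\mathrm{Hom}(V_\emptyset(K)/K^{\times p},\mu_p)$, under which $\mathrm{Frob}_{v'}$ becomes the power residue symbol $x\mapsto\left(\frac{x}{v'}\right)$. Letting $\mu\colon\bigoplus_{v\in S}\mathbb{F}_p\to\mathrm{Gal}(\mathrm{Gov}(K)/K(\zeta_p))$ send $(a_v)_v$ to $\sum_v a_v\,\mathrm{Frob}_{v'}$, the relation in the theorem with all $a_v\in\mathbb{F}_p^\times$ exists if and only if $\ker\mu$ contains a vector with all coordinates nonzero. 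Hence the theorem is equivalent to $\mathrm{im}\,\rho=\ker\mu$.

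For the inclusion $\mathrm{im}\,\rho\subseteq\ker\mu$ I would use the product formula for the $p$-th Hilbert symbol over $K(\zeta_p)$. Given $L\in\mathcal{X}_S$ with Kummer radical $\beta\in K(\zeta_p)^\times$, so $L(\zeta_p)=K(\zeta_p)(\sqrt[p]{\beta})$, and any $x\in V_\emptyset(K)$, the identity $\prod_w (x,\beta)_w=1$ over all places $w$ of $K(\zeta_p)$ should reduce term by term: the archimedean factors vanish because $p$ is odd; at $w$ above neither $S$ nor $p$ both $x$ and $\beta$ have valuation divisible by $p$, so the factor is trivial; at $w\mid v$ with $v\in S$ the factor is a power of $\left(\frac{x}{v'}\right)$; and at $w\mid p$ the factor is trivial, because $x\in V_\emptyset(K)$ forces $v_w(x)\equiv 0\pmod p$ while the unramifiedness of $L/K$ at $p$ forces $K(\zeta_p)_w(\sqrt[p]{\beta})/K(\zeta_p)_w$ to be unramified, so $\beta$ lies in the norm group of that unramified local extension. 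Collecting terms yields a relation among the $\mathrm{Frob}_{v'}$ whose coefficients are nonzero precisely because $L$ is actually ramified at each $v\in S$; thus $\rho(L)\in\ker\mu$.

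For the reverse inclusion I would argue by dimensions. A Greenberg--Wiles (Poitou--Tate) Euler-characteristic computation expresses $\dim_{\mathbb{F}_p}\mathcal{X}_S$ in terms of the dual Selmer group for $\mu_p$; the crucial observation is that the dual local condition at a prime above $p$ is the \emph{weak} condition ``valuation $\equiv 0\pmod p$,'' not ``unramified,'' so that the dual Selmer group is exactly $V_S(K)/K^{\times p}$, and $V_\emptyset(K)/K^{\times p}$ when $S=\emptyset$. Subtracting the formula for $\emptyset$ from the one for $S$ (the archimedean and $\mu_p(K)$ contributions cancel) gives $\dim\mathrm{im}\,\rho=\dim\mathcal{X}_S-\dim\mathcal{X}_\emptyset=|S|-\dim\big(V_\emptyset(K)/V_S(K)\big)$. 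On the other hand, realizing $V_S(K)$ as the kernel of $V_\emptyset(K)\to\prod_{v\in S}K_v^\times/K_v^{\times p}$ and noting that for $x\in V_\emptyset(K)$ the $v$-component is recorded by $\left(\frac{x}{v'}\right)$, one sees that the map $x\mapsto\left(\frac{x}{v'}\right)_{v\in S}$ and $\mu$ are mutually transpose, so $\dim\big(V_\emptyset(K)/V_S(K)\big)=\mathrm{rank}\,\mu=|S|-\dim\ker\mu$. Therefore $\dim\mathrm{im}\,\rho=\dim\ker\mu$, and with the previous inclusion this gives $\mathrm{im}\,\rho=\ker\mu$, hence the theorem.

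The delicate point is the local analysis at the primes above $p$: both the vanishing of the symbols $(x,\beta)_w$ there and the identification of the dual Selmer condition at $p$. It is exactly because that condition is ``valuation $\equiv 0\pmod p$'' rather than ``unramified'' that $V_\emptyset(K)$ — which imposes no condition at all at $p$ — is the correct radical and the reciprocity inclusion matches the dimension count. Everything else (the $\omega$-eigenspace descent along $K(\zeta_p)/K$, and the harmless rescaling of the coefficients $a_v$ by the integer $[K(\zeta_p):K]$, which is invertible modulo $p$ and so does not affect the ``all nonzero'' clause) is routine bookkeeping.
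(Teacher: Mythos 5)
The paper does not prove Theorem \ref{GrasMunnier}: it quotes the result and refers to \cite{GrasMunnier}, \cite[Chapter V]{Gras} and \cite{HajirMaireRamakrishna} for proofs, so there is no internal argument to compare you against. On its own terms your proposal is correct, and it is essentially the dual-Selmer route of the cited short proof: encode the degree-$p$ cyclic extensions unramified outside $S$ as a Selmer group $\mathcal{X}_S \subseteq H^1(G_K,\mathbb{Z}/p)$ with ramification map $\rho$ to $\bigoplus_{v\in S}\mathbb{F}_p$; identify $\mathrm{Gal}(\mathrm{Gov}(K)/K(\zeta_p))$ with $\mathrm{Hom}(V_\emptyset(K)/K^{\times p},\mu_p)$ so that $\mathrm{Frob}_{v'}$ acts by the power residue symbol at $v$; get $\mathrm{im}\,\rho\subseteq\ker\mu$ from the product formula for the Hilbert symbol over $K(\zeta_p)$; and get equality from the Greenberg--Wiles formula, whose decisive feature you correctly isolate, namely that the dual local condition at $p$ is ``valuation divisible by $p$'' rather than ``unramified'', so the dual Selmer group is exactly $V_S(K)/K^{\times p}$ and $\dim\mathrm{im}\,\rho=\sharp S-\dim_{\mathbb{F}_p}\big(V_\emptyset(K)/V_S(K)\big)=\dim\ker\mu$, the last equality because the transpose of $\mu$ is the localization map $V_\emptyset(K)/K^{\times p}\to\bigoplus_{v\in S}K_v^\times/K_v^{\times p}$ with kernel $V_S(K)/K^{\times p}$. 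Two points should be tightened. First, at $w\mid p$ it is $x$, not $\beta$, that you should place in the norm group of the unramified extension $K(\zeta_p)_w(\sqrt[p]{\beta})/K(\zeta_p)_w$; your two premises ($v_w(x)\equiv 0 \bmod p$ and unramifiedness of that extension, of degree dividing $p$) give exactly this, whence $(x,\beta)_w=1$, so the slip is purely verbal. Second, the equality $\mathrm{im}\,\rho=\ker\mu$ only makes sense after fixing, once and for all, the choices $v'$ and identifications of the tame quotients at $v\in S$ with $\mathbb{F}_p$; the reciprocity computation then yields at each $v$ the coefficient $v_w(\beta)\bmod p$ up to a unit depending only on $v$ (taking $\beta$ in the $\omega$-eigenspace so that the places of $K(\zeta_p)$ above $v$ contribute a prime-to-$p$ multiple of a single symbol), and since rescaling coordinates by elements of $\mathbb{F}_p^{\times}$ affects neither the dimension count nor the ``all $a_v\neq 0$'' condition, the stated theorem follows. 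With these glosses the argument is complete modulo the standard inputs you invoke (Kummer duality, local Tate duality, Greenberg--Wiles, and the tame Hilbert symbol computation).
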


\begin{remark}
From the definition of $V_{\emptyset}(K)$, $\mathrm{Gov}(K)/K(\zeta_{p})$ is unramified at all the non-$p$-adic primes of $K(\zeta_{p})$. (cf. \cite[Exercise 9.1]{Washington}). A different choice of $v'$ changes the Frobenius automorphism by a power prime to $p$; therefore, the choice is not important to the theorem.
\end{remark}

The powerfulness of $G_{\{\mathfrak{q}\}}(F)$ is a very rigid condition. If $G_{\{\mathfrak{q}\}}(F)$ is powerful, then all the subgroups of $G_{\{\mathfrak{q}\}}(F)$ have generator rank at most $d(G_{\{\mathfrak{q}\}}(F)) \leq 2$ by Proposition \ref{powerfulrank}. 

\begin{proposition}\label{nonpowerful}
Let $p$ be an odd prime. Then there are infinitely many $\mathfrak{q}$ such that $G_{\{\mathfrak{q}\}}(F)$ is not powerful.
\end{proposition}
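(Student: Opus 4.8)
\smallskip
\noindent\emph{Plan of proof.} Since $p$ is odd, the Heisenberg group $H_{p}$ satisfies $\overline{H_{p}^{\,p}}=\{1\}$ and is non-abelian, hence is not powerful; so by Lemma~\ref{quotient} it suffices to produce infinitely many non-$p$-adic primes $\mathfrak{q}$ of $F$ for which $G_{\{\mathfrak{q}\}}(F)$ admits $H_{p}$ as a quotient, equivalently for which $F$ has a Galois extension $L$ with $\mathrm{Gal}(L/F)\simeq H_{p}$ unramified outside $\mathfrak{q}$ (such an $L$ being automatically tamely ramified at $\mathfrak{q}$ and contained in $F_{\{\mathfrak{q}\}}$ by maximality). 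Any such $\mathfrak{q}$ necessarily has $d(G_{\{\mathfrak{q}\}}(F))=2$ and, by Theorem~\ref{maintheorem1}, splits in the $p$-class field tower, hence in $F_{1}$; and by Proposition~\ref{G_S/G_S^{p}} such an $L$ contains $F_{1}$, is unramified over its maximal elementary abelian subextension $M/F$, and has $p$ of its cyclic subextensions over $F_{1}$ ramified at exactly $p-1$ of the $p$ primes above $\mathfrak{q}$. It is this configuration that I would reconstruct.

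First I would fix the ground data by a Chebotarev argument: pick $\mathfrak{q}$ with $N\mathfrak{q}\equiv 1\pmod p$ and trivial Frobenius in $\mathrm{Gal}\big(\mathrm{Gov}(F)\cdot F_{1}/F\big)$, a set of primes of positive Dirichlet density. For such $\mathfrak{q}$, it splits in $F_{1}$ into $\mathfrak{q}_{1},\dots,\mathfrak{q}_{p}$ (cyclically permuted by $\mathrm{Gal}(F_{1}/F)$), and by the Gras--Munnier theorem over $F$ applied to $\{\mathfrak{q}\}$ there is a degree-$p$ cyclic extension $F'/F$ ramified precisely at $\mathfrak{q}$, with $F'\neq F_{1}$; put $M=F_{1}F'$, a $(\mathbb{Z}/p\mathbb{Z})^{2}$-extension of $F$ ramified precisely at $\mathfrak{q}$ and containing $F_{1}$. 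Next I would show the embedding problem
\[
1\longrightarrow \mathbb{Z}/p\mathbb{Z}\longrightarrow H_{p}\longrightarrow \mathrm{Gal}(M/F)\longrightarrow 1,
\]
given by the commutator pairing on $(\mathbb{Z}/p\mathbb{Z})^{2}$, is solvable. After the harmless base change to $F(\zeta_{p})$ --- as in the Gras--Munnier formalism, which is why the governing field and a Tate twist enter --- its obstruction is the cup product of the characters cutting out $F_{1}$ and $F'$, a class in the $p$-torsion of the Brauer group whose local invariants all vanish: at finite places $v\nmid\mathfrak{q}$ because one character is unramified there and the other everywhere, at archimedean places because $p$ is odd, and at $\mathfrak{q}$ because the character of $F_{1}$ is split there. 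By the Brauer--Hasse--Noether theorem the obstruction vanishes, so there is a solution $L_{0}/F$ with $\mathrm{Gal}(L_{0}/F)\simeq H_{p}$ and $L_{0}\supseteq M$.

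The final step, which I expect to be the main obstacle, is to pass from $L_{0}$ to an extension ramified only at $\mathfrak{q}$. Since $M/F$ is unramified outside $\mathfrak{q}$, the inertia of $L_{0}/F$ at any $v\neq\mathfrak{q}$ lies in $\mathrm{Gal}(L_{0}/M)\simeq\mathbb{Z}/p\mathbb{Z}$; the solutions of the embedding problem form a torsor under $H^{1}(F,\mathbb{Z}/p\mathbb{Z})$, and I would twist $L_{0}$ by a character removing the ramification at every $v\neq\mathfrak{q}$ (including the places above $p$) while keeping $\mathfrak{q}$ ramified. That such a twist exists is a Gras--Munnier-type solvability statement; equivalently, by Corollary~\ref{criterionramification} and Proposition~\ref{G_S/G_S^{p}} it is the existence of a degree-$p$ cyclic extension of $F_{1}$ ramified precisely at $p-1$ of the $\mathfrak{q}_{i}$ and compatible with descent to a Heisenberg group over $F$, and by Theorem~\ref{GrasMunnier} it reduces to a Frobenius relation among the primes above $\mathfrak{q}$ in $\mathrm{Gov}(F_{1})$, which I would secure by one more Chebotarev condition on $\mathfrak{q}$. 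Carrying this out rests on the structure of the $\mathbb{F}_{p}[\mathrm{Gal}(F_{1}/F)]$-module $V_{\emptyset}(F_{1})/F_{1}^{\times\,p}$, approximated by $U_{F_{1}}\otimes_{\mathbb{Z}}\mathbb{F}_{p}$ and heavily constrained by the cyclicity of $\mathrm{Cl}_{F}(p)$; one must check both that the required Frobenius relation defines a non-empty union of conjugacy classes (so that the corresponding set of $\mathfrak{q}$ has positive density, in particular is infinite) and that the descended group of order $p^{3}$ has exponent $p$, hence is genuinely $H_{p}$ rather than the non-abelian group of order $p^{3}$ and exponent $p^{2}$, which is itself powerful and so useless here. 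Granting this, every $\mathfrak{q}$ in the resulting infinite set admits an $H_{p}$-extension of $F$ unramified outside $\mathfrak{q}$, so $H_{p}$ is a quotient of $G_{\{\mathfrak{q}\}}(F)$ and $G_{\{\mathfrak{q}\}}(F)$ is not powerful by Lemma~\ref{quotient}.
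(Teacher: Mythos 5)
Your overall strategy --- exhibit $H_{p}$ as a Galois group over $F$ unramified outside $\mathfrak{q}$ and invoke Lemma~\ref{quotient} --- is sound in principle, and your first three steps (the Chebotarev choice of $\mathfrak{q}$, Gras--Munnier over $F$ to get $F'$ and $M=F_1F'$, and the vanishing of the cup-product obstruction to the embedding problem) are plausible. The genuine gap is exactly where you flag ``the main obstacle'': passing from the uncontrolled solution $L_{0}$ to one unramified outside $\mathfrak{q}$. The solutions form a torsor under $H^{1}(F,\mathbb{Z}/p\mathbb{Z})$, so you need a global character of order $p$ whose local restrictions are prescribed (modulo unramified characters) at every place where $L_{0}/M$ ramifies --- including wildly ramified places above $p$ --- while being unramified at all remaining places other than $\mathfrak{q}$. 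This is not given by Grunwald--Wang (which offers no control of ramification outside the prescribed set); it is precisely the kind of solvability statement governed by ray class groups and governing fields, and it can fail. Moreover, the bad set of places depends on $L_{0}$, hence on $\mathfrak{q}$, so one cannot simply impose ``one more Chebotarev condition'' after the fact; you would have to show that the Frobenius condition you need cuts out a non-empty union of conjugacy classes, and you also leave open the exponent-$p$ question for the descended group. In short, the heart of the argument --- the module-theoretic analysis of $V_{\emptyset}(F_{1})/F_{1}^{\times\,p}$ that would make the twisting and density claims precise --- is missing.

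For comparison, the paper avoids constructing an $H_{p}$-extension altogether. By Proposition~\ref{powerfulrank}, if $G_{\{\mathfrak{q}\}}(F)$ were powerful then every closed subgroup, in particular $\mathrm{Gal}(F_{\{\mathfrak{q}\}}/F_{1})$, would have generator rank at most $d(G_{\{\mathfrak{q}\}}(F))\leq 2$. Choosing $\mathfrak{q}$ split completely in $\mathrm{Gov}(F_{1})$ (an infinite set by Chebotarev) and applying Theorem~\ref{GrasMunnier} over $F_{1}$ to each singleton $\{\mathfrak{q}_{i}\}$ yields cyclic degree-$p$ extensions $L_{i}/F_{1}$ ramified precisely at $\mathfrak{q}_{i}$, whose compositum has group $(\mathbb{Z}/p\mathbb{Z})^{p}$; hence the ray class group of $F_{1}$ modulo $\mathfrak{q}\mathcal{O}_{F_{1}}$ has $p$-rank at least $p\geq 3$, contradicting powerfulness. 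If you do want the $H_{p}$-quotient, it then comes for free from Proposition~\ref{Zelmanovtheorem} and Lemma~\ref{nonabelian2pgroup}, rather than being the thing one builds by hand.
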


\begin{proof}
Suppose that $\mathfrak{q}$ splits in $F_1$ and let $\mathfrak{q}_{1}, \ldots, \mathfrak{q}_{p}$ be the primes of $F_{1}$ above $\mathfrak{q}$. The Galois group $G_{\{\mathfrak{q}\}}(F)$ is not powerful if the ray class group of $F_{1}$ modulo $\mathfrak{q}\mathcal{O}_{F_1}=\mathfrak{q}_{1} \cdots \mathfrak{q}_{p}$ has $p$-rank larger than $2$. By the Chebotarev density theorem, there are infinitely many primes $\mathfrak{q}$ of $F$ which split completely in $\mathrm{Gov}(F_{1})$. Then by Theorem \ref{GrasMunnier}, for each $1 \leq i \leq p$, there is a cyclic $p$-extension $L_{i}$ of $F_{1}$ which is ramified precisely at $\mathfrak{q}_{i}$. By considering the ramification, we can check that $\mathrm{Gal}(L_{1} \cdots L_{p}/F_{1})$ is isomorphic to $(\mathbb{Z}/p\mathbb{Z})^{p}$. Therefore, the ray class group of $F_{1}$ modulo $\underset{i=1}{\overset{p}{\prod}}\mathfrak{q}_{i}$ has $p$-rank at least $p$.
\end{proof}

\begin{remark}
Proposition \ref{nonpowerful} is in the same spirit as the strategy of \cite{HajirMaire} to prove the infinitude of $G_{S}(K)$ for small $S$. In \cite{HajirMaire}, for certain $K$ and $S$, the authors used the Gras-Munnier theorem to prove the existence of a subgroup $H$ of $G_{S}(K)$ with a large generator rank. Therefore, even though the Golod-Shafarevich test fails for $G_{S}(K)$, it can work for $H$, which leads to the conclusion that $G_{S}(K)$ is infinite.
\end{remark}

\subsection{Proof of Theorem \ref{maintheorem2}}\label{proofofTheorem1.2}

In this section, we prove Theorem \ref{maintheorem2} by studying the Galois group $\mathrm{Gal}(\mathrm{Gov}(F_{1})/F_{1}(\zeta_{p}))$. 

\subsubsection{The $\mathbb{F}_{p}[\mathrm{Gal}(F_{1}/F)]$-module structure of $\mathrm{Gal}(\mathrm{Gov}(F_{1})/F_{1}(\zeta_{p}))$}\label{subsubsection4.2.1}

According to Theorem \ref{maintheorem1} and Theorem \ref{GrasMunnier}, unless $\mathfrak{q}$ splits completely in $\mathrm{Gov}(F)F_{1}$, $G_{\{\mathfrak{q}\}}(F)$ is already known to be finite. Therefore, we focus on the non-$p$-adic primes $\mathfrak{q}$ which split completely in $F_{1}(\zeta_{p})$ such that the Frobenius automorphisms of $\mathrm{Gal}(\mathrm{Gov}(F_{1})/F_{1}(\zeta_{p}))$ at the primes of $F_{1}(\zeta_{p})$ over $\mathfrak{q}$ fix $\mathrm{Gov}(F)$.

As $F_{1}/F$ has degree $p$, $\mathrm{Gal}(F_{1}/F)$ is isomorphic to $\mathrm{Gal}(F_{1}(\zeta_{p})/F(\zeta_{p}))$. Since $V_{\emptyset}(F_{1})$ is invariant under the action of $\mathrm{Gal}(F_{1}/F)$, $\mathrm{Gov}(F_{1})$ is Galois over $F(\zeta_{p})$. Therefore, under the identification $\mathrm{Gal}(F_{1}/F) \simeq \mathrm{Gal}(F_{1}(\zeta_{p})/F(\zeta_{p}))$, $\mathrm{Gal}(F_{1}/F)$ acts on $\mathrm{Gal}(\mathrm{Gov}(F_{1})/F_{1}(\zeta_{p}))$ by inner automorphisms.

Since $\mathrm{Gal}(F_{1}/F)$ acts transitively on the primes $\mathfrak{q}_{1},\ldots, \mathfrak{q}_{p}$ of $F_{1}$ over $\mathfrak{q}$, it is natural to study the $\mathbb{F}_{p}[\mathrm{Gal}(F_{1}/F)]$-module structure of $\mathrm{Gal}(\mathrm{Gov}(F_{1})/F_{1}(\zeta_{p}))$. Let $\sigma$ be a fixed generator of $\mathrm{Gal}(F_{1}/F)$. Then $\mathbb{F}_{p}[\mathrm{Gal}(F_{1}/F)]$ and $\mathbb{F}_{p}[X]/(X^{p}-1)$ are isomorphic as rings by the homomorphism sending $\sigma$ to the class of $X$. Hence, $\mathrm{Gal}(\mathrm{Gov}(F_{1})/F_{1}(\zeta_{p}))$ is also a module over $\mathbb{F}_{p}[X]$ via the projection map $\mathbb{F}_{p}[X] \to \mathbb{F}_{p}[X]/(X^{p}-1)$.

\begin{proposition}\label{mainproposition}
Let $\mathfrak{q}$ be a non-$p$-adic prime of $F$ which splits completely in $F_{1}(\zeta_{p})$. Let $\mathfrak{P}$ be a prime of $F_{1}(\zeta_{p})$ over $\mathfrak{q}$. Let $\tau \in \mathrm{Gal}(\mathrm{Gov}(F_{1})/F_{1}(\zeta_{p}))$ be the Frobenius automorphism at $\mathfrak{P}$. If $\tau$ fixes $\mathrm{Gov}(F)$ and is not annihilated by $\Psi(X) := (X-1)^{p-2} \in \mathbb{F}_{p}[X]$, then $G_{\{\mathfrak{q}\}}(F)$ is powerful with generator rank two.
\end{proposition}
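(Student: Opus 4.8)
The plan is to derive both assertions from two applications of the Gras--Munnier theorem (Theorem~\ref{GrasMunnier}), one over $F$ and one over $F_1$. For the generator rank, note first that $\mathrm{Gov}(F)\subseteq\mathrm{Gov}(F_1)$ since $V_\emptyset(F)\subseteq V_\emptyset(F_1)$; as $\tau$ fixes both $\mathrm{Gov}(F)$ and $F_1(\zeta_p)$, it lies in $\mathrm{Gal}(\mathrm{Gov}(F_1)/\mathrm{Gov}(F)F_1(\zeta_p))$, so $\mathfrak{P}$ splits completely in $\mathrm{Gov}(F)F_1(\zeta_p)/F_1(\zeta_p)$. Combined with the complete splitting of $\mathfrak{q}$ in $F_1(\zeta_p)/F$, this shows $\mathfrak{q}$ splits completely in $\mathrm{Gov}(F)/F$, so the Frobenius at a prime of $F(\zeta_p)$ above $\mathfrak{q}$ in $\mathrm{Gal}(\mathrm{Gov}(F)/F(\zeta_p))$ is trivial. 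By Theorem~\ref{GrasMunnier} applied to $F$ with $S=\{\mathfrak{q}\}$ (whose norm is $\equiv1\bmod p$ because $\mathfrak{q}$ splits completely in $F(\zeta_p)$), $F$ admits a degree-$p$ cyclic extension ramified precisely at $\mathfrak{q}$. Since $\mathrm{Cl}_F(p)$ is cyclic, $\mathrm{rk}_p(\mathrm{Cl}_{F,\mathfrak{q}})\le 2$, and the existence of such an extension forces equality; hence $d(G_{\{\mathfrak{q}\}}(F))=2$.

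For powerfulness, by Corollary~\ref{criterionramification} it suffices to produce one index $i$ such that $F_1$ has no degree-$p$ cyclic extension ramified precisely at $\{\mathfrak{q}_j\mid 1\le j\le p,\ j\ne i\}$, and I would take $i=p$. Fix a generator $\sigma$ of $\mathrm{Gal}(F_1/F)$, let $\mathfrak{q}_1$ be the prime of $F_1$ below $\mathfrak{P}$, set $\mathfrak{q}_j=\sigma^{j-1}\mathfrak{q}_1$, and choose a lift $\tilde\sigma\in\mathrm{Gal}(\mathrm{Gov}(F_1)/F(\zeta_p))$ of $\sigma$ (which exists because $\mathrm{Gov}(F_1)/F(\zeta_p)$ is Galois and $\mathrm{Gal}(F_1(\zeta_p)/F(\zeta_p))$ is cyclic of order $p$). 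Put $\mathfrak{P}_j:=\tilde\sigma^{\,j-1}\mathfrak{P}$, a prime of $F_1(\zeta_p)$ over $\mathfrak{q}_j$. Since $\mathfrak{q}$ splits completely in $F_1(\zeta_p)$, all residue degrees over $F(\zeta_p)$ are $1$, so the Frobenius at $\mathfrak{P}_j$ equals $\tilde\sigma^{\,j-1}\tau\tilde\sigma^{-(j-1)}$, which in the $\mathbb{F}_p[X]/(X^p-1)$-module $\mathrm{Gal}(\mathrm{Gov}(F_1)/F_1(\zeta_p))$ (with $\sigma\leftrightarrow X$) equals $X^{j-1}\tau$. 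Applying Theorem~\ref{GrasMunnier} over $F_1$ with $S=\{\mathfrak{q}_1,\dots,\mathfrak{q}_{p-1}\}$ (each $\mathfrak{q}_j$ has norm $\equiv1\bmod p$), such an extension exists if and only if $\bigl(\sum_{j=1}^{p-1}a_j X^{j-1}\bigr)\tau=0$ for some $a_1,\dots,a_{p-1}\in\mathbb{F}_p^{\times}$; equivalently, if and only if some nonzero polynomial of degree $\le p-2$ annihilates $\tau$.

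It then remains to rule this out using the hypothesis. The ring $R=\mathbb{F}_p[X]/(X^p-1)=\mathbb{F}_p[X]/((X-1)^p)$ is a chain ring whose ideals are exactly $((X-1)^k)$ for $0\le k\le p$, so $\mathrm{Ann}_R(\tau)=((X-1)^k)$ for some $k$, and the assumption that $\tau$ is not annihilated by $\Psi(X)=(X-1)^{p-2}$ forces $k\ge p-1$. But the only elements of $((X-1)^{p-1})$ that are represented by a polynomial of degree $<p$ are the scalar multiples of $(X-1)^{p-1}$ itself, which has degree $p-1$; hence no nonzero polynomial of degree $\le p-2$ lies in $\mathrm{Ann}_R(\tau)$. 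With the previous paragraph this shows $F_1$ has no degree-$p$ cyclic extension ramified precisely at $\{\mathfrak{q}_1,\dots,\mathfrak{q}_{p-1}\}$, so $G_{\{\mathfrak{q}\}}(F)$ is powerful by Corollary~\ref{criterionramification}. I expect the main obstacle to be the identification of the Frobenius elements at $\mathfrak{q}_1,\dots,\mathfrak{q}_p$ with the Galois orbit $\tau,X\tau,\dots,X^{p-1}\tau$ — relying on the equivariance of the Frobenius under the lifted $\mathrm{Gal}(F_1/F)$-action together with the complete splitting of $\mathfrak{q}$ — after which the remaining argument is the elementary chain-ring computation above, the choice $i=p$ being precisely what makes the Gras--Munnier polynomial have degree $\le p-2$ so that the obstruction is exactly the condition $\Psi(X)\tau\ne 0$.
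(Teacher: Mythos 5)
Your proof is correct and takes essentially the same route as the paper: the triviality of $\tau$ on $\mathrm{Gov}(F)$ gives $d(G_{\{\mathfrak{q}\}}(F))=2$ via Gras--Munnier over $F$, and Gras--Munnier over $F_{1}$ applied to $\{\mathfrak{q}_{1},\ldots,\mathfrak{q}_{p-1}\}$, together with the identification of the Frobenius at $\sigma^{j-1}\mathfrak{P}$ with $X^{j-1}\tau$, turns non-powerfulness into a nonzero annihilating polynomial of degree at most $p-2$, which your chain-ring computation (the paper's annihilator-ideal argument in $\mathbb{F}_{p}[X]$) rules out. The only nitpick is the ``equivalently, if and only if'' in the Gras--Munnier step: only the forward implication (existence of the extension implies a nonzero polynomial of degree at most $p-2$ annihilating $\tau$) is justified there, and that is all the argument needs.
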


\begin{proof}

 Since $\mathfrak{q}$ splits completely in $F_{1}(\zeta_{p})$, the norm $\mathrm{N}_{F_{1}(\zeta_{p})/F(\zeta_{p})}\mathfrak{P}$ is equal to a prime ideal of $F(\zeta_{p})$ above $\mathfrak{q}$. Therefore, we have
\begin{equation*}
    \tau \, \big|_{\mathrm{Gov}(F)} = \bigg ( \frac{\mathrm{Gov}(F_{1})/F_{1}(\zeta_{p})}{\mathfrak{P}} \bigg) \bigg|_{\mathrm{Gov}(F)} = \bigg ( \frac{\mathrm{Gov}(F)/F(\zeta_{p})}{\mathrm{N}_{F_{1}(\zeta_{p})/F(\zeta_{p})}\mathfrak{P}} \bigg ) = 1.
\end{equation*}
Hence, we have $d(G_{\{\mathfrak{q}\}}(F))=2$ by Theorem \ref{GrasMunnier}.
     Let $\mathfrak{P}_{2}, \ldots, \mathfrak{P}_{p}$ be the conjugates of $\mathfrak{P}=\mathfrak{P}_{1}$ over $F(\zeta_{p})$. They are distinct because $\mathfrak{q}$ splits completely in $F_{1}(\zeta_{p})$. Then the primes of $F_{1}$ lying below $\mathfrak{P}_{1}, \ldots, \mathfrak{P}_{p}$ are the $p$ primes $\mathfrak{q}_{1},\ldots, \mathfrak{q}_{p}$ above $\mathfrak{q}$. By rearranging the index if necessary, we can assume that $\mathfrak{P}_{i} = \sigma^{i-1}\mathfrak{P}$ for $1 \leq i \leq p$, where we have used the notation $\sigma$ also for the $F(\zeta_{p})$-linear extension of $\sigma \in \mathrm{Gal}(F_{1}/F)$ to $F_{1}(\zeta_{p})$. By Proposition \ref{G_S/G_S^{p}}, if $G_{\{\mathfrak{q}\}}(F)$ is not powerful, then $F_{1}$ admits a cyclic extension of degree $p$ which is ramified precisely at the primes $\mathfrak{q}_{1},\ldots, \mathfrak{q}_{p-1}$. In that case, by Theorem \ref{GrasMunnier}, there are $a_{0}, a_{1}, \ldots, a_{p-2} \in \mathbb{F}_{p}^{\times}$ such that
\begin{equation*}
    \underset{i=1}{\overset{p-1}{\prod}} \, \bigg ( \frac{\mathrm{Gov}(F_{1})/F_{1}(\zeta_{p})}{\mathfrak{P}_{i}} \bigg )^{a_{i-1}} = \underset{i=1}{\overset{p-1}{\prod}} \, \bigg ( \frac{\mathrm{Gov}(F_{1})/F_{1}(\zeta_{p})}{\sigma^{i-1}\mathfrak{P}} \bigg )^{a_{i-1}} = \bigg ( \frac{\mathrm{Gov}(F_{1})/F_{1}(\zeta_{p})}{\mathfrak{P}} \bigg )^{\underset{i=0}{\overset{p-2}{\sum}}a_{i}\sigma^{i}} = 1
\end{equation*}
Therefore, $\tau$ is annihilated by $\Pi(X) := \underset{i=0}{\overset{p-2}{\sum}}a_{i}X^{i} \in \mathbb{F}_{p}[X]$. Since $\tau$ is annihilated by $X^{p}-1=(X-1)^{p}$, the annihilator of $\tau$ in $\mathbb{F}_p[X]$ must be $((X-1)^{m})$ for some $m \leq p-2$. Hence, we have a contradiction; thus $G_{\{\mathfrak{q}\}}(F)$ must be powerful.

\end{proof}

For an $\mathbb{F}_{p}[X]$-module $M$, let $M[\Psi]$ be the kernel of $\Psi(X)$ on $M$. By Proposition \ref{mainproposition}, we can prove Theorem \ref{maintheorem2} by studying the ratio
\begin{equation}\label{ratio}
\frac{\,\,\,\,\, \sharp \, \mathrm{Gal}(\mathrm{Gov}(F_{1})/\mathrm{Gov}(F)F_{1})[\Psi] \,\,\,\,\,}{[\mathrm{Gov}(F_{1}) : \mathrm{Gov}(F)F_{1}]}.
\end{equation}

\vskip 15pt

\subsubsection{$\mathbb{F}_{p}[\mathrm{Gal}(F_{1}/F)]$-module structure of $V_{\emptyset}(F_{1})/F_{1}^{\times \, p}$}

\vskip 10pt

From the definition of the governing field $\mathrm{Gov}(F_{1})$, we have the following non-degenerate Kummer pairing
\begin{equation*}
    \mathrm{Gal}(\mathrm{Gov}(F_{1})/F_{1}(\zeta_{p})) \times \frac{V_{\emptyset}(F_{1})F_{1}(\zeta_{p})^{\times \, p}}{F_{1}(\zeta_{p})^{\times \, p}} \longrightarrow \mu_{p},
\end{equation*}
where $\mu_{p}$ denotes the group of $p$th roots of unity.
For a general number field $L$, it is not difficult to check that $L^{\times}/L^{\times \, p} \to L(\zeta_{p})^{\times}/L(\zeta_{p})^{\times \, p}$ is injective. Hence, $V_{\emptyset}(F_{1})F_{1}(\zeta_{p})^{\times \, p}/F_{1}(\zeta_{p})^{\times \, p}$ is isomorphic to $V_{\emptyset}(F_{1})/F_{1}^{\times \, p}$, and thus, we have a pairing
\begin{equation*}
\mathrm{Gal}(\mathrm{Gov}(F_{1})/F_{1}(\zeta_{p})) \times V_{\emptyset}(F_{1})/F_{1}^{\times \, p} \longrightarrow \mu_{p},
\end{equation*}
which is equivariant over $\mathrm{Gal}(F_{1}/F) \simeq \mathrm{Gal}(F_{1}(\zeta_{p})/F(\zeta_{p}))$ (cf. \cite[I. \S 6]{Gras}). Since $\mathrm{Gal}(F_{1}(\zeta_{p})/F(\zeta_{p}))$ fixes $\mu_{p}$, $\mu_{p}$ is isomorphic to $\mathbb{F}_{p}$ as $\mathbb{F}_{p}[\mathrm{Gal}(F_{1}/F)]$-modules (cf. \cite[Lemma 4.13]{Koch}). Therefore, we have an isomorphism
\begin{equation*}
\mathrm{Gal}(\mathrm{Gov}(F_{1})/F_{1}(\zeta_{p})) \simeq \mathrm{Hom}_{\mathbb{F}_{p}} \big ( \, V_{\emptyset}(F_{1})/F_{1}^{\times \, p} , \, \mathbb{F}_{p} \, \big )
\end{equation*}
of $\mathbb{F}_{p}[\mathrm{Gal}(F_{1}/F)]$-modules, where $\mathrm{Hom}_{\mathbb{F}_{p}}(V_{\emptyset}(F_{1})/F_{1}^{\times \, p}, \mathbb{F}_{p})$ is equipped with the contragradient $\mathrm{Gal}(F_{1}/F)$-action.
By the Kummer theory, we also have an isomorphism
\begin{equation}\label{torsionmodule}
\mathrm{Gal}(\mathrm{Gov}(F_{1})/\mathrm{Gov}(F)F_{1}) \simeq \mathrm{Hom}_{\mathbb{F}_{p}} \big ( \, V_{\emptyset}(F_{1})/V_{\emptyset}(F)F_{1}^{\times \, p}, \mathbb{F}_{p} \big )
\end{equation}
as $\mathbb{F}_{p}[\mathrm{Gal}(F_{1}/F)]$-modules.
Hence, we can study the ratio $(\ref{ratio})$ by analyzing $\mathbb{F}_{p}[\mathrm{Gal}(F_{1}/F)]$-module structure of $V_{\emptyset}(F_{1})/V_{\emptyset}(F)F_{1}^{\times \, p}$. In particular, we have the following lemma.

\begin{lemma}\label{dualmap}
Let $M$ be a module over $\mathbb{F}_{p}[\mathrm{Gal}(F_{1}/F)] \simeq \mathbb{F}_{p}[X]/(X^{p}-1)$. Let $N$ be the dual $\mathrm{Hom}_{\mathbb{F}_{p}}(M,\mathbb{F}_{p})$ of $M$. If $M$ is finite dimensional over $\mathbb{F}_{p}$, then we have $\mathrm{dim}_{\mathbb{F}_{p}} M[\Psi] = \mathrm{dim}_{\mathbb{F}_{p}} N[\Psi]$.
\end{lemma}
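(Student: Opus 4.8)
The plan is to exploit the fact that, in characteristic $p$, the ring $R := \mathbb{F}_{p}[\mathrm{Gal}(F_{1}/F)] \cong \mathbb{F}_{p}[X]/(X^{p}-1)$ is a truncated polynomial ring. Writing $t := X-1 = \sigma - 1$, one has $X^{p}-1 = (X-1)^{p} = t^{p}$, so $R \cong \mathbb{F}_{p}[t]/(t^{p})$ is a local Artinian ring whose maximal ideal is $(t)$ and whose unit group $R^{\times}$ consists exactly of the elements with nonzero constant term. Observe that the operator $\Psi(X) = (X-1)^{p-2}$ acts on any $R$-module as multiplication by $t^{p-2}$, so that $M[\Psi] = \ker\big(t^{p-2}\colon M \to M\big)$. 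I would first record the elementary remark that $\sigma^{-1}-1$ generates the same ideal $(t)$ as $t$: indeed $\sigma^{-1}-1$ has zero constant term, hence lies in $(t)$, while $t = \sigma - 1 = -\sigma(\sigma^{-1}-1)$ lies in $(\sigma^{-1}-1)$; thus $\sigma^{-1}-1 = tu$ for some unit $u \in R^{\times}$.

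Next I would unwind the contragredient action on $N = \mathrm{Hom}_{\mathbb{F}_{p}}(M,\mathbb{F}_{p})$. For $f \in N$ and $m \in M$ one has $\big((\sigma-1)f\big)(m) = f\big((\sigma^{-1}-1)m\big)$, and iterating gives $\big(\Psi f\big)(m) = \big((\sigma-1)^{p-2}f\big)(m) = f\big((\sigma^{-1}-1)^{p-2}m\big)$. Hence $f \in N[\Psi]$ if and only if $f$ vanishes on the $\mathbb{F}_{p}$-subspace $(\sigma^{-1}-1)^{p-2}M$ of $M$. By the remark above, $(\sigma^{-1}-1)^{p-2} = t^{p-2}u^{p-2}$ with $u^{p-2}$ a unit, so $(\sigma^{-1}-1)^{p-2}M = t^{p-2}M$. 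Therefore $N[\Psi]$ is precisely the annihilator in $N$ of the subspace $t^{p-2}M$, which is canonically isomorphic to $(M/t^{p-2}M)^{*}$; in particular $\dim_{\mathbb{F}_{p}}N[\Psi] = \dim_{\mathbb{F}_{p}}M - \dim_{\mathbb{F}_{p}}(t^{p-2}M)$.

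Finally I would apply the rank-nullity theorem to the $\mathbb{F}_{p}$-linear endomorphism $t^{p-2}$ of $M$, whose kernel is $M[\Psi]$ and whose image is $t^{p-2}M$: this yields $\dim_{\mathbb{F}_{p}}M = \dim_{\mathbb{F}_{p}}M[\Psi] + \dim_{\mathbb{F}_{p}}(t^{p-2}M)$. Combining with the identity from the previous paragraph gives $\dim_{\mathbb{F}_{p}}N[\Psi] = \dim_{\mathbb{F}_{p}}M[\Psi]$, as claimed. The only point that needs any care is the bookkeeping with the contragredient action, namely checking that replacing $\sigma$ by $\sigma^{-1}$ multiplies $t^{p-2}$ by a unit of $R$ and so does not change the relevant image; once $R$ has been identified with $\mathbb{F}_{p}[t]/(t^{p})$ this is immediate. (Alternatively one could argue via the self-injectivity of the Frobenius algebra $R$ together with the structure theorem for finitely generated $\mathbb{F}_{p}[t]/(t^{p})$-modules, reducing to the case of a single cyclic summand, but the direct computation above seems shortest.)
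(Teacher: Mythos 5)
Your proof is correct and rests on the same key observation as the paper's: under the contragredient action the operator $(\sigma-1)^{p-2}$ dualizes to $(\sigma^{-1}-1)^{p-2}$, which differs from $\Psi$ only by a unit of $\mathbb{F}_p[X]/(X^p-1)$, so the kernel on $N$ is again $N[\Psi]$. The remaining dimension count is the same elementary linear algebra, packaged slightly differently (your annihilator-plus-rank-nullity argument versus the paper's dualized four-term exact sequence), so the two proofs are essentially identical.
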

\begin{proof}
Let $0 \rightarrow M[\Psi] \rightarrow M \xrightarrow{\Psi(X)} M \rightarrow M' \rightarrow 0$ be the tautological exact sequence. Since the functor $P \to \mathrm{Hom}_{\mathbb{F}_{p}}(P,\mathbb{F}_{p})$ on the category of $\mathbb{F}_p$-vector spaces is exact, we have the exact sequence
\begin{equation*}
    0 \longrightarrow \mathrm{Hom}_{\mathbb{F}_{p}}(M',\mathbb{F}_{p}) \longrightarrow N \xrightarrow{\,\, \Psi^{\ast} \,\,} N \longrightarrow \mathrm{Hom}_{\mathbb{F}_{p}}(M[\Psi], \mathbb{F}_{p}) \longrightarrow 0,
\end{equation*}
where $\Psi^{\ast}$ denotes the dual map of $\Psi(X)$ on $N$. From the definition of the contragradient $\mathrm{Gal}(F_{1}/F)$-action on $\mathrm{Hom}_{\mathbb{F}_{p}}(M,\mathbb{F}_{p})$, the dual map of $(x-1)^{p-2}=(\sigma-1)^{p-2}$ is equal to $(\sigma^{-1}-1)^{p-2}=(x^{-1}-1)^{p-2}=(-x)^{-p+2}(x-1)^{p-2}$. Since $(-x)^{-p+2}$ is invertible, the kernel of $\Psi^{\ast}$ on $N$ is equal to $N[\Psi]$. If $\mathrm{dim}_{\mathbb{F}_{p}}M$ is finite, then we have $\mathrm{dim}_{\mathbb{F}_{p}} M[\Psi] = \mathrm{dim}_{\mathbb{F}_{p}}M' = \mathrm{dim}_{\mathbb{F}_{p}}\mathrm{Hom}_{\mathbb{F}_{p}}(M',\mathbb{F}_{p}) = \mathrm{dim}_{\mathbb{F}_{p}}N[\Psi]$.
\end{proof}

By Lemma \ref{dualmap} and the Kummer duality $(\ref{torsionmodule})$, the ratio $(\ref{ratio})$ is equal to 
\begin{equation}\label{ratio2}
\frac{\,\,\,\,\, \sharp \, \big ( V_{\emptyset}(F_{1})/V_{\emptyset}(F)F_{1}^{\times \, p} \big )[\Psi] \,\,\,\,\,}{\sharp \, V_{\emptyset}(F_{1})/V_{\emptyset}(F)F_{1}^{\times \, p}}.
\end{equation}

Since $\mathbb{F}_{p}[X]/(X^{p}-1)$ is a principal ideal domain and $X^{p}-1=(X-1)^{p} \in \mathbb{F}_{p}[X]$, any finitely generated $\mathbb{F}_{p}[X]/(X^{p}-1)$-module $M$ is isomorphic to
\begin{equation*}
\underset{i=1}{\overset{p}{\bigoplus}}\, \big ( \mathbb{F}_{p}[X]/((X-1)^{i}) \big )^{a_{i}}
\end{equation*}
as $\mathbb{F}_{p}[X]/(X^{p}-1)$-modules for unique non-negative integers $a_{i}$ (cf. Jordan normal form). For convenience, let us denote $\mathbb{F}_{p}[X]/((X-1)^{i})$ by $Y_{i}$ for each $1 \leq i \leq p$. We note that $Y_{1}$ is isomorphic to $\mathbb{F}_{p}$ and $Y_{p}$ is the group ring $\mathbb{F}_{p}[\mathrm{Gal}(F_{1}/F)]$ over $\mathbb{F}_{p}$, under the identification $\mathbb{F}_{p}[X]/(X^{p}-1) \simeq \mathbb{F}_{p}[\mathrm{Gal}(F_{1}/F)]$.

For a number field $L$, let us denote the group of units of $\mathcal{O}_L$ by $U_{L}$. Let $\mu_{L}$ be the subgroup of $U_{L}$ of the roots of unity in $L$. Define $\delta_{L} : = 1$ if $\mu_{p} \subset L^{\times}$ and $\delta_{L}:=0$ otherwise.

The module $V_{\emptyset}(F_{1})/F_{1}^{\times \, p}$ can be analyzed by using the well-known exact sequence (cf. \cite[\S 11.2]{Koch})
\begin{equation}\label{Shafarevichsequence}
    0 \longrightarrow U_{F_{1}} \otimes_{\mathbb{Z}} \mathbb{F}_{p} \longrightarrow V_{\emptyset}(F_{1})/F_{1}^{\times \, p} \longrightarrow \mathrm{Cl}_{F_{1}}[p] \longrightarrow 0,
\end{equation}
where $\mathrm{Cl}_{F_{1}}[p]$ denotes the subgroup of $\mathrm{Cl}_{F_{1}}$ of elements of order $p$. The strong assumption on the cyclicity of the $p$-class group of $F$ gives us the following information on $U_{F_{1}} \otimes_{\mathbb{Z}} \mathbb{F}_{p}$.

\begin{proposition}\label{Galoismoduleunit}
We have the following information on the $\mathbb{F}_{p}[X]/(X^{p}-1)$-module structure of $U_{F_{1}} \otimes_{\mathbb{Z}} \mathbb{F}_{p}$ :
\begin{enumerate}
\item If $\delta_{F_{1}}=0$, then $U_{F_{1}} \otimes_{\mathbb{Z}} \mathbb{F}_{p}$ is isomorphic to $Y_{p}^{r_{F}} \oplus Y_{p-1}$;
\item If $\delta_{F_{1}}=1$ and $\mu_{F_{1}}(p) \neq \mu_{F}(p)$, then $U_{F_{1}} \otimes_{\mathbb{Z}} \mathbb{F}_{p}$ is isomorphic to $Y_{p}^{r_{F}} \oplus Y_{p-1} \oplus \mathbb{F}_{p}$;
\item If $\delta_{F_{1}} = 1$ and $\mu_{F_{1}}(p) = \mu_{F}(p)$, then $U_{F_{1}} \otimes_{\mathbb{Z}} \mathbb{F}_{p}$ is isomorphic to $Y^{r_F}_p \oplus Y_{p-1} \oplus \mathbb{F}_p$ or $Y^{r_F-1}_p \oplus Y^2_{p-1} \oplus \mathbb{F}^2_p$.
\end{enumerate}
Therefore, the multiplicity of $Y_{p}$ in the Krull-Schmidt decomposition of $U_{F_{1}} \otimes_{\mathbb{Z}} \mathbb{F}_{p}$ is at least $r_{F}-1$.
\end{proposition}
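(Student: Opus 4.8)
The plan is to strip off the torsion, pin down the $\mathbb{Z}_p[G]$-lattice $U_{F_1}/\mu_{F_1}$ (with $G=\mathrm{Gal}(F_1/F)$), and then recover $U_{F_1}\otimes_{\mathbb Z}\mathbb F_p$ from the exact sequence $0\to\mu_{F_1}\otimes\mathbb F_p\to U_{F_1}\otimes\mathbb F_p\to(U_{F_1}/\mu_{F_1})\otimes\mathbb F_p\to 0$, which is exact because $U_{F_1}/\mu_{F_1}$ is $\mathbb Z$-free. Since $p$ is odd, every archimedean place of $F$ splits completely in $F_1$ (a local degree divides $p$ and is at most $2$, hence equals $1$), so Dirichlet's unit theorem gives $\mathrm{rank}_{\mathbb Z}U_{F_1}=p(r_F+1)-1$; via the logarithmic embedding, $U_{F_1}\otimes\mathbb Q$ is, as a $\mathbb Q[G]$-module, the kernel of the sum-of-coordinates map on $\bigoplus_{w\mid\infty}\mathbb Q w\cong\mathbb Q[G]^{r_F+1}$, that is $\mathbb Q^{r_F}\oplus(I_G\otimes\mathbb Q)^{r_F+1}$, where $I_G$ denotes the augmentation ideal.

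Next I would invoke the Diederichsen--Reiner classification of $\mathbb Z_p[G]$-lattices for $G$ cyclic of prime order: the indecomposables are $\mathbb Z_p$, $\mathbb Z_p[\zeta_p]$ (with $\sigma$ acting by $\zeta_p$), and $\mathbb Z_p[G]$. Writing $(U_{F_1}/\mu_{F_1})\otimes\mathbb Z_p\cong\mathbb Z_p^{a}\oplus\mathbb Z_p[\zeta_p]^{b}\oplus\mathbb Z_p[G]^{c}$ and matching $\mathbb Q_p[G]$-characters with the previous paragraph forces $b=a+1$ and $c=r_F-a$; reducing modulo $p$ — where $\mathbb Z_p\mapsto Y_1$, $\mathbb Z_p[G]\mapsto Y_p$, and $\mathbb Z_p[\zeta_p]\mapsto Y_{p-1}$ since the $p$-th cyclotomic polynomial is $\equiv(X-1)^{p-1}\bmod p$ — gives $(U_{F_1}/\mu_{F_1})\otimes\mathbb F_p\cong Y_1^{a}\oplus Y_{p-1}^{a+1}\oplus Y_p^{r_F-a}$. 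Moreover $a=\dim_{\mathbb F_p}\widehat{H}^0(G,U_{F_1}/\mu_{F_1})$, since among the three indecomposable lattices only $\mathbb Z_p$ has nonzero $\widehat{H}^0$. It then remains to bound $a$ and to settle the torsion extension.

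The key arithmetic input is the vanishing $\widehat{H}^0(G,U_{F_1})=U_F/N_{F_1/F}U_{F_1}=0$. Because $F_1/F$ is unramified at every place, Chevalley's ambiguous class number formula gives $\sharp\,\mathrm{Cl}_{F_1}^{G}=h_F/p$; since $N_{F_1/F}\circ j=p$ on $\mathrm{Cl}_F$ (so $\ker j\subseteq\mathrm{Cl}_F[p]$) and $j(\mathrm{Cl}_F)\subseteq\mathrm{Cl}_{F_1}^{G}$, the hypothesis that $\mathrm{Cl}_F(p)$ is cyclic forces $\sharp(\ker j)(p)=p$ and $j(\mathrm{Cl}_F(p))=\mathrm{Cl}_{F_1}^{G}(p)$. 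On the other hand, the Hasse norm theorem (applicable as $F_1/F$ is cyclic and unramified everywhere) together with $\ker(N\colon I_{F_1}\to I_F)=(\sigma-1)I_{F_1}$ yields, by chasing the exact sequences linking units, principal ideals, and ideal classes, an injection $U_F/N_{F_1/F}U_{F_1}\hookrightarrow\mathrm{Cl}_{F_1}^{G}(p)/j(\mathrm{Cl}_F(p))=0$. Feeding $\widehat{H}^0(G,U_{F_1})=0$ into the Tate cohomology sequence of $0\to\mu_{F_1}\to U_{F_1}\to U_{F_1}/\mu_{F_1}\to 0$ gives $\widehat{H}^0(G,U_{F_1}/\mu_{F_1})\cong\ker\!\bigl(\widehat{H}^1(G,\mu_{F_1})\to\widehat{H}^1(G,U_{F_1})\bigr)$, hence $a\le\dim_{\mathbb F_p}\widehat{H}^1(G,\mu_{F_1})$. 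A direct computation of the Tate cohomology of the cyclic $G$-module $\mu_{F_1}(p)$ shows this dimension is $0$ when $\delta_{F_1}=0$ or $\mu_{F_1}(p)\neq\mu_F(p)$, and is $1$ when $\delta_{F_1}=1$ and $\mu_{F_1}(p)=\mu_F(p)$; thus $a=0$ in the first two cases of the statement and $a\in\{0,1\}$ in the third.

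Finally I would recover $U_{F_1}\otimes\mathbb F_p$ from the extension of $(U_{F_1}/\mu_{F_1})\otimes\mathbb F_p$ by $\mu_{F_1}\otimes\mathbb F_p$ (which is $0$ if $\delta_{F_1}=0$ and $\cong Y_1$ with trivial $G$-action if $\delta_{F_1}=1$). Since $\mathrm{Ext}^1_{\mathbb F_p[G]}(Y_i,Y_1)\cong\mathbb F_p$ for $1\le i\le p-1$ and vanishes for $i=p$, only the $Y_{p-1}$- and $Y_1$-summands can merge with the torsion; one identifies the extension — equivalently, the successive dimensions $\dim_{\mathbb F_p}(\sigma-1)^{j}(U_{F_1}\otimes\mathbb F_p)$ — by a further class-field-theoretic computation of the coinvariants $(U_{F_1}\otimes\mathbb F_p)_G$ together with the local $p$-th power conditions at $p$ and at the archimedean places, which both singles out one of the listed modules and excludes a $Y_2$-summand. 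Matching the outcome against the list $Y_1^{a}\oplus Y_{p-1}^{a+1}\oplus Y_p^{r_F-a}$ reproduces exactly the three cases, and the last assertion is then immediate: $Y_p$ occurs with multiplicity $r_F$ (in the first two cases and the first option of the third) or $r_F-1$ (in the second option). The main obstacle is this final step — the delicate class field theory needed to pin down the torsion extension and exclude $Y_2$ — whereas the vanishing $\widehat{H}^0(G,U_{F_1})=0$ and the reduction to $a\in\{0,1\}$ are clean, and the surrounding lattice theory and Tate-cohomology bookkeeping are routine.
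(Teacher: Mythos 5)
Up to the determination of $(U_{F_1}/\mu_{F_1})\otimes_{\mathbb{Z}}\mathbb{F}_p$, your argument is essentially the one in the paper: the same reduction to the torsion-free quotient, the same Diederichsen--Reiner decomposition $\mathbb{Z}_p^{a}\oplus\mathbb{Z}_p[\zeta_p]^{b}\oplus\mathbb{Z}_p[G]^{c}$ with $b=a+1$, $c=r_F-a$ from the rational character count, and the same reduction of the free parameter to a Tate cohomology computation for the units. Your arithmetic input $\widehat{H}^0(G,U_{F_1})=0$ (Chevalley's ambiguous class number formula plus the Hasse norm theorem plus cyclicity of $\mathrm{Cl}_F(p)$) is equivalent, via the Herbrand quotient $1/p$ of $U_{F_1}$ in the everywhere-unramified extension $F_1/F$, to the paper's computation $H^1(G,U_{F_1})\simeq\mathbb{F}_p$ via Iwasawa's capitulation lemma; either way one gets $a=0$ in the first two cases and $a\in\{0,1\}$ in the third, which matches the paper. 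So this portion is correct, just a different but equivalent bookkeeping ($\widehat{H}^0$ of the lattice detects $\mathbb{Z}_p$-summands, the paper's $H^1$ detects $\mathbb{Z}_p[\zeta_p]$-summands).

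The genuine gap is exactly the step you yourself flag as the main obstacle: you never prove that the torsion $\mu_{F_1}\otimes_{\mathbb{Z}}\mathbb{F}_p\simeq Y_1$ contributes a \emph{split} $\mathbb{F}_p$-summand rather than promoting a $Y_{p-1}$ to $Y_p$ or a $Y_1$ to $Y_2$ in the extension defining $U_{F_1}\otimes_{\mathbb{Z}}\mathbb{F}_p$. Without that, case (2) could a priori give $Y_p^{r_F+1}$, and case (3) could give $Y_p^{r_F+1}$ or $Y_p^{r_F-1}\oplus Y_{p-1}^2\oplus Y_2$, none of which are on the list; note also that in case (3) the goal is not to ``single out one of the listed modules'' (the statement deliberately leaves both possibilities open) but to show that the Jordan type of $U_{F_1}\otimes_{\mathbb{Z}}\mathbb{F}_p$ is that of $(U_{F_1}/\mu_{F_1})\otimes_{\mathbb{Z}}\mathbb{F}_p$ with one extra $Y_1$, i.e.\ $a_1=b_1+1$ and $a_i=b_i$ for $i\geq 2$. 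Your proposed route (coinvariants plus local $p$th-power conditions) is left as a sketch and it is not evident it suffices. The paper finishes precisely here by comparing $(X-1)\cdot\big(U_{F_1}\otimes_{\mathbb{Z}}\mathbb{F}_p\big)$ with $(X-1)\cdot\big((U_{F_1}/\mu_{F_1})\otimes_{\mathbb{Z}}\mathbb{F}_p\big)$, using that $\mu_{F_1}\otimes_{\mathbb{Z}}\mathbb{F}_p$ is killed by $X-1$, to conclude the Jordan multiplicities shift only in the $Y_1$-slot. Observe moreover that in case (2) your own cohomology computation already yields a clean fix: there $\mu_{F_1}(p)$ is $G$-cohomologically trivial, and $\mathrm{Ext}^1_{\mathbb{Z}_p[G]}(L,\mu_{F_1}(p))$ equals $H^1(G,\mu_{F_1}(p))$, $\widehat{H}^0(G,\mu_{F_1}(p))$, or $0$ according as $L=\mathbb{Z}_p$, $\mathbb{Z}_p[\zeta_p]$, or $\mathbb{Z}_p[G]$, so the $\mathbb{Z}_p[G]$-extension of the unit lattice by $\mu_{F_1}(p)$ splits outright; thus the only place where real additional work remains in your write-up is case (3), and that work is missing.
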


\begin{proof}
Let $E_{F_{1}}$ be the quotient of $U_{F_{1}}$ by $\mu_{F_{1}}$. Since $E_{F_{1}}$ is torsion free over $\mathbb{Z}$, we have the exact sequence
\begin{equation}\label{modpsequence}
0 \longrightarrow \mu_{F_{1}} \otimes_{\mathbb{Z}} \mathbb{F}_{p} \longrightarrow U_{F_{1}} \otimes_{\mathbb{Z}} \mathbb{F}_{p} \longrightarrow E_{F_{1}} \otimes_{\mathbb{Z}} \mathbb{F}_{p} \longrightarrow 0.
\end{equation}

By the Krull-Schmidt theorem and the theorem of Diederichsen (cf. \cite[\S 74]{CurtisReiner}, \cite{Diederichsen}, \cite[\S 2]{HellerReiner}), we have an isomorphism
\begin{equation}\label{Zpstructure}
E_{F_{1}} \otimes_{\mathbb{Z}} \mathbb{Z}_{p} \simeq \mathbb{Z}_{p}^{a} \oplus \big ( \mathbb{Z}_{p}[X]/(1+X+\cdots + X^{p-1}) \big )^{b} \oplus \big ( \mathbb{Z}_{p}[X]/(X^{p}-1) \big )^{c}
\end{equation}
as $\mathbb{Z}_{p}[X]/(X^{p}-1)$-modules for unique non-negative integers $a,b,c$. (We are using the identification $\mathbb{Z}_{p}[\mathrm{Gal}(F_{1}/F)] \simeq \mathbb{Z}_{p}[X]/(X^{p}-1)$.) By applying the tensor product $\otimes_{\mathbb{Z}_{p}} \mathbb{F}_{p}$ to $(\ref{Zpstructure})$, we obtain
\begin{equation*}
E_{F_{1}} \otimes_{\mathbb{Z}} \mathbb{F}_{p} \simeq \mathbb{F}^a_p \oplus Y^b_{p-1} \oplus Y^c_p. 
\end{equation*}

By the Dirichlet-Herbrand theorem (cf. \cite[Lemma I.3.6]{Gras}), we also have the isomorphism
\begin{equation}\label{rational representation}
E_{F_1} \otimes_{\mathbb{Z}} \mathbb{Q}_p \simeq \mathbb{Q}_p[X]/(1+X+ \cdots + X^{p-1}) \oplus \big ( \mathbb{Q}_p[X]/(X^p-1) \big )^{r_{F}}
\end{equation}
of $\mathbb{Q}_p[X]/(X^p-1)$-modules. By the Chinese remainder theorem, we have the isomorphism
\begin{equation*}
    \mathbb{Q}_p[X]/(X^p-1) \simeq \mathbb{Q}_p \oplus \mathbb{Q}_p[X]/(1+X+\cdots +X^{p-1})
\end{equation*}
of $\mathbb{Q}_p[X]/(X^p-1)$-modules. Comparing the right hand expression of $(\ref{rational representation})$ and the $\mathbb{Q}_p$-tensor of the right hand expression of $(\ref{Zpstructure})$, we obtain $a + 1= b$ and $b+c = r_F + 1$. The lattices $\mathbb{Z}_p$ and $\mathbb{Z}_p[X]/(X^p-1)$ have trivial first cohomology, and one has the isomorphism $H^1(\mathrm{Gal}(F_1/F), \mathbb{Z}_p[X]/(1+X+ \cdots + X^{p-1})) \simeq \mathbb{F}_p$. Hence, the number $b$
is uniquely determined by computing $H^{1}(\mathrm{Gal}(F_{1}/F), E_{F_{1}} \otimes_{\mathbb{Z}} \mathbb{Z}_{p})$; as a result, we get
\begin{equation*}
a=b-1 \,\, , \,\, b=\mathrm{rk}_{p}H^{1}(\mathrm{Gal}(F_{1}/F),E_{F_{1}} \otimes_{\mathbb{Z}} \mathbb{Z}_{p}) \,\, , \,\, c = (r_{F}+1)-b.
\end{equation*}
We can study $H^{1}(\mathrm{Gal}(F_{1}/F), E_{F_{1}} \otimes_{\mathbb{Z}} \mathbb{Z}_{p})$ by computing $H^{1}(\mathrm{Gal}(F_{1}/F), U_{F_{1}} \otimes_{\mathbb{Z}} \mathbb{Z}_{p}) \simeq$ $ H^{1}(\mathrm{Gal}(F_{1}/F), U_{F_{1}})$ and using the long exact sequence of cohomology groups associated to the exact sequence
\begin{equation*}
0 \longrightarrow \mu_{F_{1}}(p) \longrightarrow U_{F_{1}} \otimes_{\mathbb{Z}} \mathbb{Z}_{p} \longrightarrow E_{F_{1}} \otimes_{\mathbb{Z}} \mathbb{Z}_{p} \longrightarrow 0.
\end{equation*}

By a theorem of Iwasawa \cite{Iwasawa}, $H^{1}(\mathrm{Gal}(F_{1}/F), U_{F_{1}})$ is isomorphic to the kernel of the canonical map $I_{F_{1}}^{\mathrm{Gal}(F_{1}/F)}/P_{F} \rightarrow \mathrm{Cl}_{F_{1}}$, where $I_{F_{1}}$ (resp. $P_{F}$) is the group of fractional ideals (resp. principal fractional ideals) of $F_{1}$ (resp. $F$). Since $F_{1}/F$ is unramified, $I_{F_{1}}^{\mathrm{Gal}(F_{1}/F)}/P_{F}$ is isomorphic to $\mathrm{Cl}_{F}$, and the $p$-group $H^{1}(\mathrm{Gal}(F_{1}/F), U_{F_{1}})$ is isomorphic to the kernel of the transfer map $\mathrm{Cl}_{F}(p) \rightarrow \mathrm{Cl}_{F_{1}}(p)$. Since $F_{1}$ and $F$ share the same $p$-Hilbert class field, the capitulation kernel is equal to $\mathrm{Cl}_{F}[p] \simeq \mathbb{F}_p$. Hence, we have $H^{1}(\mathrm{Gal}(F_{1}/F),U_{F_{1}}) \simeq \mathbb{F}_p$.

If $\delta_{F}=0$ or $\mu_{F}(p) \neq \mu_{F_{1}}(p)$, then we have $\hat{H}^{i}(\mathrm{Gal}(F_{1}/F), \mu_{F_{1}}(p)) = 0$ for all $i \in \mathbb{Z}$ (cf. \cite[Lemma 5.4.4(1)]{Popescu}). Therefore, we can conclude that $b$ is equal to $1$ and there exists an isomorphism $E_{F_{1}} \otimes_{\mathbb{Z}} \mathbb{F}_{p} \simeq Y_{p}^{r_{F}} \oplus Y_{p-1}$. In particular, the first claim of the proposition follows.

On the other hand, if $\mu_{F}(p) = \mu_{F_{1}}(p)$ and $\delta_{F}=1$, then we have $\hat{H}^{i}(\mathrm{Gal}(F_{1}/F), \mu_{F_{1}}(p)) \simeq \mathbb{F}_p$ for all $i \in \mathbb{Z}$. Therefore, $b$ is equal to $1$ or $2$ because it is positive by the Dirichlet-Herbrand theorem. Hence, $E_{F_1} \otimes_{\mathbb{Z}} \mathbb{F}_p$ is isomorphic to one of $Y^{r_F}_p \oplus Y_{p-1}$ or $Y^{r_F-1}_p \oplus Y^2_{p-1} \oplus \mathbb{F}_p$.

Suppose that $\mu_{F_1}(p)$ is non-trivial. Let $\{a_i \}_{1 \leq i \leq p}$ and $\{b_i \}_{1 \leq i \leq p}$ be the non-negative integers in the following Jordan normal forms
\begin{equation*}
    U_{F_1} \otimes_{\mathbb{Z}} \mathbb{F}_p \simeq \underset{i=1}{\overset{p}{\bigoplus}} \, Y^{a_i}_i \quad \text{and} \quad E_{F_1} \otimes_{\mathbb{Z}} \mathbb{F}_p \simeq \underset{i=1}{\overset{p}{\bigoplus}} \, Y^{b_i}_i.
\end{equation*}
We can use $(\ref{modpsequence})$ to study the relationship between $\{a_i\}_{1 \leq i \leq p}$ and $\{b_i\}_{1 \leq i \leq p}$. Since $\mu_{F_1} \otimes_{\mathbb{Z}} \mathbb{F}_p$ is isomorphic to $Y_1$, the $\mathbb{F}_p[X]$-module $\mu_{F_1} \otimes_{\mathbb{Z}} \mathbb{F}_p$ is contained in the kernel of the multiplication by $(X-1) \in \mathbb{F}_p[X]$ on $U_{F_1} \otimes_{\mathbb{Z}} \mathbb{F}_p$. Hence, we have the isomorphisms
\begin{equation*}
\underset{i=2}{\overset{p}{\bigoplus}} \, Y^{a_{i}}_{i-1} \simeq (X-1) \cdot (U_{F_1} \otimes_{\mathbb{Z}} \mathbb{F}_p) \simeq (X-1) \cdot (E_{F_1} \otimes_{\mathbb{Z}} \mathbb{F}_p) \simeq \underset{i=2}{\overset{p}{\bigoplus}} \, Y^{b_{i}}_{i-1}.
\end{equation*}
As a consequence, one has $a_1=b_1+1$ and $a_i=b_i$ for every $2 \leq i \leq p$. From this, the remaining claims of the proposition follow.
\end{proof}

To study $V_{\emptyset}(F_{1})/V_{\emptyset}(F)F_{1}^{\times \, p}$, we first need to know the image of $V_{\emptyset}(F)$ in $V_{\emptyset}(F_{1})/F_{1}^{\times \, p}$.

\begin{lemma}\label{imbedding}
The image of $V_{\emptyset}(F)$ in $V_{\emptyset}(F_{1})/F_{1}^{\times \, p}$ is generated by the image of the subgroup $U_{F} \subsetneq V_{\emptyset}(F)$ and an element of $U_{F_{1}}F_{1}^{\times \, p}/F_{1}^{\times \, p} \simeq U_{F_1} \otimes_{\mathbb{Z}} \mathbb{F}_p$ which is fixed by $\mathrm{Gal}(F_{1}/F)$. Therefore, the image of $V_{\emptyset}(F)$ in $V_{\emptyset}(F_{1})/F_{1}^{\times \,p}$ is contained in the submodule $(U_{F_{1}} \otimes_{\mathbb{Z}} \mathbb{F}_{p})^{\mathrm{Gal}(F_{1}/F)}$.
\end{lemma}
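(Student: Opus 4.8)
The plan is to read off the $\mathbb{F}_p$-generators of $V_{\emptyset}(F)/F^{\times p}$ from the Shafarevich-type sequence over $F$ and then follow each generator under the natural map to $V_{\emptyset}(F_{1})/F_{1}^{\times p}$. First I would observe that $V_{\emptyset}(F)\subseteq V_{\emptyset}(F_{1})$: if $(x)=\mathfrak{a}^{p}$ with $\mathfrak{a}$ a fractional ideal of $F$, then $(x)=(\mathfrak{a}\mathcal{O}_{F_{1}})^{p}$ in $F_{1}$, and there is no local condition since the set of primes is empty; as $F^{\times p}\subseteq F_{1}^{\times p}$, the composite $V_{\emptyset}(F)\hookrightarrow V_{\emptyset}(F_{1})\twoheadrightarrow V_{\emptyset}(F_{1})/F_{1}^{\times p}$ factors through $V_{\emptyset}(F)/F^{\times p}$. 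Applying the analogue of $(\ref{Shafarevichsequence})$ for $F$ and using that $\mathrm{Cl}_{F}(p)$ is non-trivial cyclic, hence $\mathrm{Cl}_{F}[p]\simeq\mathbb{F}_{p}$, the group $V_{\emptyset}(F)/F^{\times p}$ is generated by the image of $U_{F}$ together with a single element $x_{0}\in V_{\emptyset}(F)$ for which $(x_{0})=\mathfrak{a}^{p}$ with $[\mathfrak{a}]$ of exact order $p$ in $\mathrm{Cl}_{F}$.

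The unit part is the easy half. We have $U_{F}\subseteq U_{F_{1}}$, and a unit of $\mathcal{O}_{F_{1}}$ that is a $p$-th power in $F_{1}$ is the $p$-th power of a unit (if $w^{p}\in U_{F_{1}}$ then $(w)^{p}$ is the unit ideal, so $(w)$ is $p$-torsion in the free abelian group of fractional ideals, hence trivial); therefore $U_{F_{1}}\cap F_{1}^{\times p}=U_{F_{1}}^{p}$ and $U_{F_{1}}F_{1}^{\times p}/F_{1}^{\times p}\simeq U_{F_{1}}\otimes_{\mathbb{Z}}\mathbb{F}_{p}$. Since $U_{F}$ is fixed elementwise by $\mathrm{Gal}(F_{1}/F)$, its image in $V_{\emptyset}(F_{1})/F_{1}^{\times p}$ lies in $(U_{F_{1}}\otimes_{\mathbb{Z}}\mathbb{F}_{p})^{\mathrm{Gal}(F_{1}/F)}$.

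The real content is to show the image of $x_{0}$ also lands in $U_{F_{1}}F_{1}^{\times p}/F_{1}^{\times p}$, and for this I would invoke capitulation: the ideal $\mathfrak{a}$, whose class has order $p$, becomes principal in $F_{1}$. Indeed, since $F_{1}/F$ is unramified cyclic of degree $p$, Hilbert's Theorem $94$ gives that the capitulation kernel of $\mathrm{Cl}_{F}\to\mathrm{Cl}_{F_{1}}$ has order divisible by $[F_{1}:F]=p$; this kernel lies in $\mathrm{Cl}_{F}(p)$ (for $\ell\neq p$ the composite of $\mathrm{Cl}_{F}(\ell)\to\mathrm{Cl}_{F_{1}}(\ell)$ with the norm is multiplication by $p$, an isomorphism), which is cyclic, so the kernel contains its unique order-$p$ subgroup $\mathrm{Cl}_{F}[p]\ni[\mathfrak{a}]$ — this is precisely the computation already carried out in the proof of Proposition $\ref{Galoismoduleunit}$, where the kernel is identified with $H^{1}(\mathrm{Gal}(F_{1}/F),U_{F_{1}})\simeq\mathbb{F}_{p}$. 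Writing $\mathfrak{a}\mathcal{O}_{F_{1}}=(y)$ with $y\in F_{1}^{\times}$, we get $(x_{0})=(y^{p})$ in $F_{1}$, hence $x_{0}=u\,y^{p}$ for some $u\in U_{F_{1}}$; thus the image of $x_{0}$ in $V_{\emptyset}(F_{1})/F_{1}^{\times p}$ coincides with that of $u$, which lies in $U_{F_{1}}F_{1}^{\times p}/F_{1}^{\times p}$, and since $x_{0}\in F^{\times}$ is $\mathrm{Gal}(F_{1}/F)$-invariant, so is its image. Combining the two paragraphs, the image of $V_{\emptyset}(F)$ is generated by the image of $U_{F}$ and by this single $\mathrm{Gal}(F_{1}/F)$-fixed element, and is therefore contained in $(U_{F_{1}}\otimes_{\mathbb{Z}}\mathbb{F}_{p})^{\mathrm{Gal}(F_{1}/F)}$.

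I expect the only genuine subtlety to be the capitulation step — and even that is essentially in hand through the $H^{1}$-computation in Proposition $\ref{Galoismoduleunit}$ — while the remainder is bookkeeping with $(\ref{Shafarevichsequence})$ and the elementary identity $U_{F_{1}}\cap F_{1}^{\times p}=U_{F_{1}}^{p}$.
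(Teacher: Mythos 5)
Your proposal is correct and follows essentially the same route as the paper: generate $V_{\emptyset}(F)$ modulo $p$-th powers via the Shafarevich-type sequence over $F$ (units plus one element $\alpha$ with $(\alpha)=\mathfrak{a}^{p}$, $[\mathfrak{a}]$ generating $\mathrm{Cl}_{F}[p]$), then use capitulation of $\mathrm{Cl}_{F}[p]$ in $F_{1}$ to write $\alpha=\beta^{p}u$ with $u\in U_{F_{1}}$ whose class is Galois-fixed. Your justification of the capitulation step via Hilbert's Theorem 94 is a harmless variant of the paper's, which obtains the same fact from the $H^{1}(\mathrm{Gal}(F_{1}/F),U_{F_{1}})$ computation in Proposition \ref{Galoismoduleunit}.
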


\begin{proof}
From the exact sequence for $F$ that is analogous to $(\ref{Shafarevichsequence})$, we can check that $V_{\emptyset}(F)$ is generated by $F^{\times \, p}$, $U_{F}$ and an element $\alpha \in F^{\times}$ such that $(\alpha) = \mathfrak{a}^{p} \in I_{F}$ for a fractional ideal $\mathfrak{a} \in I_{F}$ whose class in $\mathrm{Cl}_{F}$ is a generator of $\mathrm{Cl}_{F}[p] \simeq \mathbb{F}_p$. Since $\mathrm{Cl}_{F}[p]$ capitulates in $\mathrm{Cl}_{F_{1}}$, the ideal $\mathfrak{a}$ becomes principal in $I_{F_1}$, and we have $(\alpha) = (\beta)^{p}$ in $I_{F_{1}}$ for some $\beta \in F_{1}^{\times}$. Hence, $\alpha = \beta^{p}u$ for some $u \in U_{F_{1}}$ whose class in $V_{\emptyset}(F_{1})/F_{1}^{\times \,p}$ is fixed by $\mathrm{Gal}(F_{1}/F)$.
\end{proof}

\begin{proposition}\label{upperbound}
Let $p$ be an odd prime, and let $F$ be a number field with non-trivial cyclic $p$-class group. 
 Assume that for $p$=3, $F$ is not a complex quartic number field containing $\zeta_{3}$. Then the ratio $(\ref{ratio2})$ is bounded above by $p^{-\mathrm{max}\{r_{F}-1,1\}}$.
\end{proposition}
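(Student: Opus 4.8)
The plan is to turn $(\ref{ratio2})$ into a single dimension count and then estimate it using the module structure from Proposition~\ref{Galoismoduleunit}. Write $M := V_\emptyset(F_1)/V_\emptyset(F)F_1^{\times p}$, a finite $\mathbb{F}_p[X]/(X^p-1)$-module. Since $X^p-1=(X-1)^p$ and $M[\Psi]=\ker\!\big((X-1)^{p-2}\colon M\to M\big)$, the tautological exact sequence gives $\sharp M[\Psi] = \sharp M \cdot p^{-\dim_{\mathbb{F}_p}(X-1)^{p-2}M}$, so $(\ref{ratio2})$ equals $p^{-\dim_{\mathbb{F}_p}(X-1)^{p-2}M}$ and it is enough to prove
\[
\dim_{\mathbb{F}_p}(X-1)^{p-2}M \;\ge\; \max\{r_F-1,\,1\}.
\]
First I would record the elementary fact that $\dim(X-1)^{p-2}(\,\cdot\,)$ is superadditive on short exact sequences of finite $\mathbb{F}_p[X]/(X^p-1)$-modules: for $0\to A\to B\to C\to 0$ the map $(X-1)^{p-2}B\to C$ has image $(X-1)^{p-2}C$ and its kernel contains $(X-1)^{p-2}A$. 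Quotienting the middle term of $(\ref{Shafarevichsequence})$ by the image $\overline{V_\emptyset(F)}$ of $V_\emptyset(F)$, which lies inside $(U_{F_1}\otimes_{\mathbb{Z}}\mathbb{F}_p)^{\mathrm{Gal}(F_1/F)}$ by Lemma~\ref{imbedding}, gives the exact sequence
\[
0 \longrightarrow (U_{F_1}\otimes_{\mathbb{Z}}\mathbb{F}_p)\big/\overline{V_\emptyset(F)} \longrightarrow M \longrightarrow \mathrm{Cl}_{F_1}[p] \longrightarrow 0 ,
\]
whence $\dim(X-1)^{p-2}M \ge \dim(X-1)^{p-2}\big((U_{F_1}\otimes\mathbb{F}_p)/\overline{V_\emptyset(F)}\big) + \dim(X-1)^{p-2}\mathrm{Cl}_{F_1}[p]$.

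The core of the argument is the estimate for the unit part. Write $W:=U_{F_1}\otimes_{\mathbb{Z}}\mathbb{F}_p\cong\bigoplus_{i=1}^{p}Y_i^{a_i}$ and $V:=\overline{V_\emptyset(F)}\subseteq W^{\mathrm{Gal}(F_1/F)}$. Since $p$ is odd, $(X-1)^{p-2}$ annihilates every $\mathrm{Gal}(F_1/F)$-fixed element, hence annihilates $V$; meanwhile $(X-1)^{p-2}Y_i=0$ for $i\le p-2$, $(X-1)^{p-2}Y_{p-1}$ is the one-dimensional socle of $Y_{p-1}$, and $(X-1)^{p-2}Y_p$ is two-dimensional with one-dimensional socle. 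Computing $(X-1)^{p-2}W$ and $(X-1)^{p-2}W\cap W^{\mathrm{Gal}(F_1/F)}$ explicitly then gives
\[
\dim(X-1)^{p-2}(W/V)\;=\;\dim(X-1)^{p-2}W-\dim\!\big((X-1)^{p-2}W\cap V\big)\;\ge\;a_p,
\]
the multiplicity of $Y_p=\mathbb{F}_p[\mathrm{Gal}(F_1/F)]$ in $W$. By the last assertion of Proposition~\ref{Galoismoduleunit}, $a_p\ge r_F-1$, so $\dim(X-1)^{p-2}M\ge r_F-1$; this already yields the bound $p^{-(r_F-1)}$ whenever $r_F\ge 2$ (in which case $r_F-1\ge 1$ as well).

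It remains to gain the extra ``$\ge 1$'' when $r_F\le 1$. If $a_p\ge 1$ we are done; if $a_p=0$ then, running through the three cases of Proposition~\ref{Galoismoduleunit}, either $r_F=1$ with $\delta_{F_1}=1$ and $\mu_{F_1}(p)=\mu_F(p)$, which forces $\zeta_p\in F$ — and a short unit-rank computation for fields containing $\mathbb{Q}(\zeta_p)$ shows that such an $F$ with $r_F=1$ and nontrivial $p$-class group must, for $p=3$, be a complex quartic field containing $\zeta_3$ (excluded by hypothesis), while for $p\ge 5$ no such $F$ exists — or $r_F=0$, i.e.\ $F$ is imaginary quadratic. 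In this last case the $a_p$-estimate degenerates, and one instead has to produce a Jordan block of length $\ge p-1$ inside $M$ from arithmetic input on $F_1$: that $\mathrm{Cl}_F[p]$ capitulates in $F_1$, so $\mathrm{Cl}_{F_1}[p]$ is a cyclic $\mathbb{F}_p[\mathrm{Gal}(F_1/F)]$-module, together with an analysis of the extension of $\mathrm{Cl}_{F_1}[p]$ by $(U_{F_1}\otimes\mathbb{F}_p)/\overline{V_\emptyset(F)}$ defining $M$ and of how $\overline{V_\emptyset(F)}$ sits relative to the socle of $U_{F_1}\otimes\mathbb{F}_p$.

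I expect this last, imaginary-quadratic case $r_F=0$ to be the main obstacle: the bound $p^{-(r_F-1)}$ is pure module theory once Proposition~\ref{Galoismoduleunit} is in hand, but the extra unit of dimension needed when $r_F=0$ has to come from genuine control of $\mathrm{Cl}_{F_1}(p)$ and of the embedding $\overline{V_\emptyset(F)}\hookrightarrow U_{F_1}\otimes\mathbb{F}_p$. Everything else — the reduction, the superadditivity lemma, the exact sequence, and the inequality $a_p\ge r_F-1$ — is routine given the results already established.
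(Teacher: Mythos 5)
For $r_F\ge 1$ your argument is correct and is essentially the paper's own proof: the reduction of $(\ref{ratio2})$ to $p^{-\dim_{\mathbb{F}_p}\Psi(X)M}$ with $M=V_\emptyset(F_1)/V_\emptyset(F)F_1^{\times p}$, the exact sequence with kernel $(U_{F_1}\otimes_{\mathbb{Z}}\mathbb{F}_p)/(\text{image of }V_\emptyset(F))$ and cokernel $\mathrm{Cl}_{F_1}[p]$, and the lower bound by the multiplicity $a_p$ of $Y_p$ are exactly the steps in the paper. Your way of getting $\dim_{\mathbb{F}_p}\Psi(X)\bigl((U_{F_1}\otimes_{\mathbb{Z}}\mathbb{F}_p)/(\text{image of }V_\emptyset(F))\bigr)\ge a_p$, by intersecting $\Psi(X)\cdot(U_{F_1}\otimes_{\mathbb{Z}}\mathbb{F}_p)$ with the Galois-fixed subspace, is only a cosmetic variant of the paper's epimorphism onto $\Psi(X)\cdot Y_{p-1}^{\,r_F-1}$, and your analysis of $r_F=1$ (if $a_p=0$ then case (3) of Proposition \ref{Galoismoduleunit} forces $\zeta_p\in F$, which with $r_F=1$ and a nontrivial $p$-class group leaves only the excluded complex quartic fields for $p=3$, and nothing for $p\ge 5$) is the paper's closing argument.

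The case $r_F=0$ that you leave open is a genuine gap in your write-up, but it is not one you could have closed, and the paper's proof does not treat it either (its inequality only gives $2t_p+t_{p-1}\ge r_F-1$, and its final paragraph assumes $r_F=1$): the asserted bound actually fails when $r_F=0$. Let $F$ be imaginary quadratic with $\mathrm{Cl}_F(p)$ cyclic of order exactly $p$ (for instance $p=3$, $F=\mathbb{Q}(\sqrt{-23})$). Then $\zeta_p\notin F$ (else $F=\mathbb{Q}(\sqrt{-3})$, of class number $1$), and since $[F(\zeta_p):F]$ divides $p-1$ while $[F_1:F]=p$, also $\zeta_p\notin F_1$; moreover, as the paper itself uses, $F$ and $F_1$ share the same $p$-Hilbert class field, which here is $F_1$, so $\mathrm{Cl}_{F_1}[p]=0$. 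Hence $V_\emptyset(F_1)/F_1^{\times p}\cong U_{F_1}\otimes_{\mathbb{Z}}\mathbb{F}_p$ has dimension $r_{F_1}=p-1$, while $V_\emptyset(F)/F^{\times p}$ is spanned by an $\alpha$ with $(\alpha)=\mathfrak{a}^p$ and $\mathfrak{a}$ nonprincipal; if $\alpha=\gamma^p$ with $\gamma\in F_1$, then $\sigma\gamma/\gamma\in\mu_p(F_1)=\{1\}$, so $\gamma\in F$ and $\mathfrak{a}=(\gamma)$, a contradiction. So the image of $V_\emptyset(F)$ is one-dimensional, $M$ has dimension $p-2$, every Jordan block of $M$ has length at most $p-2$, hence $\Psi(X)M=0$ and the ratio $(\ref{ratio2})$ equals $1$, not at most $p^{-1}$. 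In particular the arithmetic input you hoped to extract from $\mathrm{Cl}_{F_1}[p]$ is unavailable precisely because the cyclicity hypothesis forces it to vanish. The correct repair is therefore not a finer argument in the imaginary quadratic case but the additional hypothesis $r_F\ge 1$ in Proposition \ref{upperbound} (and correspondingly in Theorem \ref{maintheorem2}); with that proviso your proof is complete and coincides with the paper's.
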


\begin{proof}
Suppose that $V_{\emptyset}(F_{1})/V_{\emptyset}(F)F_{1}^{\times \, p}$ is isomorphic to $\underset{i=1}{\overset{p}{\bigoplus}} Y_{i}^{t_{i}}$ as $\mathbb{F}_{p}[X]/(X^{p}-1)$-modules for some integers $\{t_{i}\}_{1 \leq i \leq p}$. Then, the ratio $(\ref{ratio2})$ is equal to $p^{-2t_{p}-t_{p-1}}$. Let $W$ be the image of $V_{\emptyset}(F)$ in $V_{\emptyset}(F_{1})/F_{1}^{\times \, p}$. By Lemma \ref{imbedding}, $W$ is imbedded in $U_{F_{1}} \otimes_{\mathbb{Z}} \mathbb{F}_{p} \subseteq V_{\emptyset}(F_1)/F^{\times \, p}_1$, and therefore, we have an exact sequence
\begin{equation*}
0 \longrightarrow (U_{F_{1}} \otimes_{\mathbb{Z}} \mathbb{F}_{p})/W \longrightarrow V_{\emptyset}(F_{1})/V_{\emptyset}(F)F_{1}^{\times \, p} \longrightarrow \mathrm{Cl}_{F_{1}}[p] \longrightarrow 0
\end{equation*}
of $\mathbb{F}_{p}[X]/(X^{p}-1)$-modules.
By Lemma \ref{imbedding}, $(U_{F_{1}} \otimes_{\mathbb{Z}} \mathbb{F}_{p})/W$ has $(U_{F_{1}} \otimes_{\mathbb{Z}} \mathbb{F}_{p})/(U_{F_{1}} \otimes_{\mathbb{Z}} \mathbb{F}_{p})^{\mathrm{Gal}(F_{1}/F)}$ as a quotient. By Proposition \ref{Galoismoduleunit}, $(U_{F_{1}} \otimes_{\mathbb{Z}} \mathbb{F}_{p})/(U_{F_{1}} \otimes_{\mathbb{Z}} \mathbb{F}_{p})^{\mathrm{Gal}(F_{1}/F)}$ has $Y_{p-1}^{r_{F}-1}$ as a direct factor. Hence, if $\underset{i=1}{\overset{p}{\bigoplus}} \, Y^{c_i}_i$ is the Jordan normal form of $(U_{F_1} \otimes_{\mathbb{Z}} \mathbb{F}_p)/W$, then we have the inequalities
\begin{equation*}
r_F -1 \leq 2c_p + c_{p-1} \leq 2t_p + t_{p-1}.
\end{equation*}
The first inequality follows from the existence of an epimorphism
\begin{equation*}
Y^{c_p}_2 \oplus \mathbb{F}^{c_{p-1}}_p \simeq \Psi(X) \cdot \big ( (U_{F_{1}} \otimes_{\mathbb{Z}} \mathbb{F}_p ) / W \big ) \longrightarrow \Psi(X) \cdot Y^{r_F-1}_{p-1} \simeq \mathbb{F}^{r_F-1}_p,
\end{equation*}
and the second inequality follows from the inclusion
\begin{equation*}
    \Psi(X) \cdot \big ( (U_{F_{1}} \otimes_{\mathbb{Z}} \mathbb{F}_p ) / W \big ) \subseteq \Psi(X) \cdot V_{\emptyset}(F_1)/V_{\emptyset}(F)F^{\times \, p}_1 \simeq Y^{t_p}_2 \oplus \mathbb{F}^{t_{p-1}}_p.
\end{equation*}

If $r_{F}=1$, then we have $\delta_{F}=0$ unless $p=3$ and $F$ is a quadratic extension of $\mathbb{Q}(\zeta_{3})$. If $\delta_{F}=0$ and $r_{F}=1$, then we have $U_{F_{1}} \otimes_{\mathbb{Z}} \mathbb{F}_{p} \simeq Y_{p} \oplus Y_{p-1}$ by Proposition \ref{Galoismoduleunit}. Therefore, $U_{F_1} \otimes_{\mathbb{Z}} \mathbb{F}_p$ has $Y_p$ as a quotient. By the same argument, we have $2t_{p} + t_{p-1} \geq 1$, and the claim follows.
\end{proof}

Before giving a proof of Theorem \ref{maintheorem2}, let us make a following lemma.

\begin{lemma}\label{invariance}
The set $\mathrm{Gal}(\mathrm{Gov}(F_{1})/\mathrm{Gov}(F)F_{1})[\Psi]$ is invariant under the conjugation action of $\mathrm{Gal}(\mathrm{Gov}(F)F_{1}/F)$.
\end{lemma}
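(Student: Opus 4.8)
The plan is to upgrade the $\mathbb{F}_{p}[X]/(X^{p}-1)$-module $N := \mathrm{Gal}(\mathrm{Gov}(F_{1})/\mathrm{Gov}(F)F_{1})$ to a module over a larger Galois group and observe that the operator $\Psi(X)$ is equivariant for this larger action. First I would record the tower structure. Both $\mathrm{Gov}(F)$ and $F_{1}$ are Galois over $F$, so $\mathrm{Gov}(F)F_{1}$ is Galois over $F$; moreover $V_{\emptyset}(F) \subseteq V_{\emptyset}(F_{1})$ (an element of $F^{\times}$ generating a $p$-th power ideal of $F$ generates a $p$-th power ideal of $F_{1}$), so $\mathrm{Gov}(F) \subseteq \mathrm{Gov}(F_{1})$ and hence $F_{1}(\zeta_{p}) \subseteq \mathrm{Gov}(F)F_{1} \subseteq \mathrm{Gov}(F_{1})$; finally $\mathrm{Gov}(F_{1})/F$ is Galois since $V_{\emptyset}(F_{1})$ is defined intrinsically from $F_{1}$ and is therefore stable under $\mathrm{Gal}(\overline{F}/F)$. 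Consequently $N$ is a normal subgroup of $\Gamma := \mathrm{Gal}(\mathrm{Gov}(F_{1})/F)$ with $\Gamma/N \simeq \mathrm{Gal}(\mathrm{Gov}(F)F_{1}/F)$, and since $N$ is abelian the conjugation action of $\Gamma$ on $N$ factors through $\mathrm{Gal}(\mathrm{Gov}(F)F_{1}/F)$. Hence it suffices to prove that the conjugation action of $\Gamma$ preserves $N[\Psi]$.

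The key step is to show that for every $g \in \Gamma$ the conjugation automorphism $\rho_{g}$ of $N$ is $\mathbb{F}_{p}[X]$-linear. Recall that, $N$ being a submodule of $\mathrm{Gal}(\mathrm{Gov}(F_{1})/F_{1}(\zeta_{p}))$, the action of $X$ is conjugation by a fixed lift $\hat{\sigma} \in \mathrm{Gal}(\mathrm{Gov}(F_{1})/F(\zeta_{p}))$ of the chosen generator $\sigma$ of $\mathrm{Gal}(F_{1}/F) \simeq \mathrm{Gal}(F_{1}(\zeta_{p})/F(\zeta_{p}))$, and that this is independent of the lift because $\mathrm{Gal}(\mathrm{Gov}(F_{1})/F_{1}(\zeta_{p}))$ is abelian. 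Now I would use that $\mathrm{Gal}(F_{1}(\zeta_{p})/F)$ is abelian: indeed $F_{1}/F$ and $F(\zeta_{p})/F$ are cyclic of coprime degrees, so $F_{1} \cap F(\zeta_{p}) = F$. Since $F(\zeta_{p})/F$ is Galois, $g\hat{\sigma}g^{-1}$ again lies in $\mathrm{Gal}(\mathrm{Gov}(F_{1})/F(\zeta_{p}))$, and its restriction to $F_{1}(\zeta_{p})$ is $\overline{g}\,\sigma\,\overline{g}^{-1} = \sigma$ by commutativity; thus $g\hat{\sigma}g^{-1}$ is again a lift of $\sigma$, and it therefore induces the same conjugation on $N$ as $\hat{\sigma}$. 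Hence $\rho_{g}\circ X\circ\rho_{g}^{-1} = X$ on $N$, so $\rho_{g}$ commutes with $\Psi(X) = (X-1)^{p-2}$ and preserves $N[\Psi] = \ker\Psi(X)$, which is the claim.

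I expect the only genuinely substantive point — and the place to be most careful — to be precisely that conjugation by an arbitrary $g \in \Gamma$ sends the distinguished lift $\hat{\sigma}$ to another lift of $\sigma$, so that the operator $X$ is "invisible" to this conjugation; this rests on the commutativity of $\mathrm{Gal}(F_{1}(\zeta_{p})/F)$, i.e. on $[F_{1}:F]$ and $[F(\zeta_{p}):F]$ being coprime. Everything else is routine Galois bookkeeping: the Galois-ness of the compositum fields, the normality of $N$ in $\Gamma$, and the fact that an abelian normal subgroup is centralized by itself.
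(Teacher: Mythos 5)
Your proposal is correct and follows essentially the same route as the paper: both arguments hinge on the fact that $F_{1}(\zeta_{p})/F$ is abelian, so that conjugation by any element of $\mathrm{Gal}(\mathrm{Gov}(F)F_{1}/F)$ (or its lift) commutes with the $\mathrm{Gal}(F_{1}/F)$-action on $\mathrm{Gal}(\mathrm{Gov}(F_{1})/\mathrm{Gov}(F)F_{1})$ and hence preserves $\ker\Psi(X)$. Your extra bookkeeping (Galois-ness of $\mathrm{Gov}(F_{1})/F$, normality, independence of lifts via the abelianness of $\mathrm{Gal}(\mathrm{Gov}(F_{1})/F_{1}(\zeta_{p}))$) just makes explicit what the paper's short commutator computation leaves implicit.
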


\begin{proof}
Since $F_{1}(\zeta_{p})/F$ is an abelian extension, for any $g \in \mathrm{Gal}(\mathrm{Gov}(F)F_{1}/F)$ and $\gamma \in \mathrm{Gal}(F_{1}(\zeta_{p})/F(\zeta_{p}))$, the restriction of $g$ to $F_{1}(\zeta_{p})$ commutes with $\gamma$. Therefore, for any $x \in \mathrm{Gal}(\mathrm{Gov}(F_{1})/\mathrm{Gov}(F)F_{1})$ and some liftings $\tilde{\gamma}, \tilde{g}$ of $\gamma$ and $g$ to $\mathrm{Gov}(F_{1})$, we have
\begin{equation*}
\tilde{g}^{-1}\tilde{\gamma} \tilde{g} x \tilde{g}^{-1}\tilde{\gamma}^{-1}\tilde{g} = \tilde{\gamma} x \tilde{\gamma}^{-1}.
\end{equation*}
Thus, the conjugation action of $g$ on $\mathrm{Gal}(\mathrm{Gov}(F_{1})/\mathrm{Gov}(F)F_{1})$ is equivariant over $\mathrm{Gal}(F_{1}/F)$.
\end{proof}

\vskip 10pt

\begin{proof}[\textbf{Proof of Theorem \ref{maintheorem2}}]
Let $\mathfrak{M}$ and $\mathfrak{M}'$ be the sets of primes of $F$ in the statement of Theorem \ref{maintheorem2}. By the remark at the beginning of \S \ref{subsubsection4.2.1}, $\mathfrak{M}$ is equal to the set of non-$p$-adic primes $\mathfrak{q}$ of $F$ which split completely in $\mathrm{Gov}(F)F_{1}$. By Proposition \ref{mainproposition}, $\mathfrak{M}'$ is a subset of the set $\mathfrak{M}''$ of primes $\mathfrak{q} \in \mathfrak{M}$ such that for each prime $\mathfrak{P}$ of $\mathrm{Gov}(F)F_{1}$ above $\mathfrak{q}$, the Frobenius automorphism of $\mathrm{Gal}(\mathrm{Gov}(F_{1})/\mathrm{Gov}(F)F_{1})$ at $\mathfrak{P}$ is annihilated by $\Psi(X)$. Let $\mathfrak{N}$ be the set of primes of $\mathrm{Gov}(F)F_{1}$, and let $\mathfrak{N}'$ be the set of non-$p$-adic primes of $\mathrm{Gov}(F)F_{1}$ whose Frobenius automorphism in $\mathrm{Gal}(\mathrm{Gov}(F_{1})/\mathrm{Gov}(F)F_{1})$ is annihilated by $\Psi(X)$. By the Chebotarev density theorem, we have
\begin{equation*}
\frac{\sharp \, \mathrm{Gal}(\mathrm{Gov}(F_{1})/\mathrm{Gov}(F)F_{1})[\Psi]}{[\mathrm{Gov}(F_{1}):\mathrm{Gov}(F)F_{1}]} = \underset{s \to 1^{+}}{\lim} \, \frac{\,\,\,\,\,\underset{\mathfrak{p} \in \mathfrak{N}'}{\sum}\frac{1}{\mathrm{N}(\mathfrak{p})^{s}}\,\,\,\,\,}{\underset{\mathfrak{p} \in \mathfrak{N}}{\sum} \frac{1}{\mathrm{N}(\mathfrak{p})^{s}}}.
\end{equation*}
By Lemma \ref{invariance}, $\mathfrak{N}'$ is invariant under the action of $\mathrm{Gal}(\mathrm{Gov}(F)F_{1}/F)$. Let $D(\mathfrak{M}'')$ be the Dirichlet density of $\mathfrak{M}''$.
Since the set $\mathfrak{N}_0$ of primes of $\mathrm{Gov}(F)F_{1}$ with absolute residue class degree larger than $1$ has Dirichlet density $0$, we have
\begin{equation*}
\underset{s \to 1^{+}}{\lim} \, \frac{\,\,\,\,\,\underset{\mathfrak{p} \in \mathfrak{N}'}{\sum}\frac{1}{\mathrm{N}(\mathfrak{p})^{s}}\,\,\,\,\,}{\underset{\mathfrak{p} \in \mathfrak{N}}{\sum} \frac{1}{\mathrm{N}(\mathfrak{p})^{s}}} = \underset{s \to 1^{+}}{\lim} \, \frac{\,\,\,\,\,\underset{\mathfrak{p} \in \mathfrak{N}' \setminus \mathfrak{N}_0}{\sum}\frac{1}{\mathrm{N}(\mathfrak{p})^{s}}\,\,\,\,\,}{\underset{\mathfrak{p} \in \mathfrak{N} \setminus \mathfrak{N}_0}{\sum} \frac{1}{\mathrm{N}(\mathfrak{p})^{s}}} = \underset{s \to 1^{+}}{\lim} \, \frac{\,\,\,\,\,\underset{\mathfrak{q} \in \mathfrak{M}''}{\sum}\frac{1}{\mathrm{N}(\mathfrak{q})^{s}}\,\,\,\,\,}{\underset{\mathfrak{q} \in \mathfrak{M}}{\sum} \frac{1}{\mathrm{N}(\mathfrak{q})^{s}}} = \frac{D(\mathfrak{M}'')}{D(\mathfrak{M})},
\end{equation*}
where the second equality follows from Lemma \ref{invariance}. Therefore, we have
\begin{equation*}
\frac{D(\mathfrak{M}')}{D(\mathfrak{M})} \leq \frac{D(\mathfrak{M}'')}{D(\mathfrak{M})} = \frac{\sharp \, \mathrm{Gal}(\mathrm{Gov}(F_{1})/\mathrm{Gov}(F)F_{1})[\Psi]}{[\mathrm{Gov}(F_{1}):\mathrm{Gov}(F)F_{1}]}.
\end{equation*}
The theorem follows from Proposition \ref{upperbound}.
\end{proof}

To summarize, by the remark at the beginning of \S \ref{subsubsection4.2.1}, $G_{\{\mathfrak{q}\}}(F)$ is finite unless $\mathfrak{q}$ splits completely in $\mathrm{Gov}(F)F_{1}$. On the other hand, unless $p=3$ and $F$ is a complex quartic number field containing $\zeta_{3}$, $G_{\{\mathfrak{q}\}}(F)$ is finite for the majority of $\mathfrak{q}$'s which split completely in $\mathrm{Gov}(F)F_1$ by Theorem \ref{maintheorem2}. This conclusion leads to the following corollary.

\begin{corollary}
Let $F$ be a number field with a non-trivial cyclic $p$-class group. Assume that for $p=3$, $F$ is not a complex quartic number field containing $\zeta_{3}$. Then the Dirichlet density of the set of primes $\mathfrak{q}$ of $F$ for which $G_{\{\mathfrak{q}\}}(F)$ is infinite is bounded above by $[\mathrm{Gov}(F)F_{1}:F]^{-1} \cdot p^{-\mathrm{max}\{r_{F}-1,1\}}$, which is at most
\begin{equation*}
\frac{1}{[F(\zeta_{p}):F] \cdot p^{r_{F}+1+\delta_{F} + \mathrm{max}\{r_{F}-1,1\}}}.
\end{equation*}

\end{corollary}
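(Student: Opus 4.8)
The plan is to show, up to a set of primes of Dirichlet density zero, that the set $\mathfrak{S}=\{\mathfrak{q}:G_{\{\mathfrak{q}\}}(F)\text{ is infinite}\}$ coincides with the set $\mathfrak{M}'$ of Theorem~\ref{maintheorem2}, and then to substitute into the bound $D(\mathfrak{M}')/D(\mathfrak{M})\le p^{-\mathrm{max}\{r_F-1,1\}}$ both the exact value of $D(\mathfrak{M})$ and an explicit lower bound for $[\mathrm{Gov}(F)F_1:F]$. The hypothesis on $F$ for $p=3$ is inherited solely from the invocation of Theorem~\ref{maintheorem2}.

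First I would carry out the reduction $\mathfrak{S}=\mathfrak{M}'$ up to density zero. The finitely many $p$-adic primes contribute nothing to Dirichlet density, so we may restrict to non-$p$-adic $\mathfrak{q}$. Recall from \S 3 that $d(G_{\{\mathfrak{q}\}}(F))\le 2$ since $\mathrm{Cl}_F$ has cyclic $p$-part, and that $G_{\{\mathfrak{q}\}}(F)$ is finite whenever $d(G_{\{\mathfrak{q}\}}(F))\le 1$ (being then pro-$p$ cyclic with finite abelianization). Moreover, Theorem~\ref{maintheorem1} shows $G_{\{\mathfrak{q}\}}(F)$ is finite when $\mathfrak{q}$ does not split in the $p$-class field tower of $F$. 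Hence a non-$p$-adic prime with $G_{\{\mathfrak{q}\}}(F)$ infinite must have $d(G_{\{\mathfrak{q}\}}(F))=2$ and must split in the $p$-class field tower, i.e.\ lies in $\mathfrak{M}$; by the definition of $\mathfrak{M}'$ this yields $\mathfrak{S}=\mathfrak{M}'$ after discarding the density-zero set of $p$-adic primes.

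Next I would compute $D(\mathfrak{M})$ and apply Theorem~\ref{maintheorem2}. As recorded in the proof of Theorem~\ref{maintheorem2}, $\mathfrak{M}$ is precisely the set of non-$p$-adic primes of $F$ that split completely in $\mathrm{Gov}(F)F_1$, so the Chebotarev density theorem gives $D(\mathfrak{M})=[\mathrm{Gov}(F)F_1:F]^{-1}$. Combining this with Theorem~\ref{maintheorem2} yields
\[
D(\mathfrak{S})=D(\mathfrak{M}')\le p^{-\mathrm{max}\{r_F-1,1\}}\cdot[\mathrm{Gov}(F)F_1:F]^{-1},
\]
which is the first bound in the statement.

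Finally I would bound $[\mathrm{Gov}(F)F_1:F]$ from below. Since $F(\zeta_p)\subseteq\mathrm{Gov}(F)\subseteq\mathrm{Gov}(F)F_1$, we have $[\mathrm{Gov}(F)F_1:F]\ge[F(\zeta_p):F]\cdot[\mathrm{Gov}(F):F(\zeta_p)]$. By Kummer theory over $F(\zeta_p)$, together with the injectivity of $F^{\times}/F^{\times\,p}\to F(\zeta_p)^{\times}/F(\zeta_p)^{\times\,p}$ noted after Definition~\ref{governing}, one gets $[\mathrm{Gov}(F):F(\zeta_p)]=\sharp\big(V_{\emptyset}(F)/F^{\times\,p}\big)$; and the analogue of the exact sequence~\eqref{Shafarevichsequence} for $F$ gives $\dim_{\mathbb{F}_p}V_{\emptyset}(F)/F^{\times\,p}=\dim_{\mathbb{F}_p}(U_F\otimes_{\mathbb{Z}}\mathbb{F}_p)+\dim_{\mathbb{F}_p}\mathrm{Cl}_F[p]=(r_F+\delta_F)+1$, using that $\mathrm{Cl}_F$ has non-trivial cyclic $p$-part. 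Hence $[\mathrm{Gov}(F)F_1:F]\ge[F(\zeta_p):F]\cdot p^{r_F+1+\delta_F}$, and inserting this into the displayed bound gives the second inequality. The only mildly delicate points are the two preliminary reductions of the second paragraph and the dimension count for $V_{\emptyset}(F)/F^{\times\,p}$; beyond that the argument is merely an assembly of the results proved above, so I do not anticipate a genuine obstacle.
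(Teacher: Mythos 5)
Your proposal is correct and follows essentially the same route as the paper: identify the infinite-$G_{\{\mathfrak{q}\}}(F)$ primes with $\mathfrak{M}'$ up to density zero via Theorem \ref{maintheorem1} and the rank bound, apply Theorem \ref{maintheorem2} with $D(\mathfrak{M})=[\mathrm{Gov}(F)F_{1}:F]^{-1}$ from Chebotarev, and bound $[\mathrm{Gov}(F)F_{1}:F]$ below by $[F(\zeta_{p}):F]\cdot p^{r_{F}+\delta_{F}+1}$. Your Kummer-theoretic count $[\mathrm{Gov}(F):F(\zeta_{p})]=\sharp\big(V_{\emptyset}(F)/F^{\times\,p}\big)=p^{r_{F}+\delta_{F}+1}$ simply makes explicit the inequality the paper states without proof, so there is no substantive divergence.
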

\begin{proof}
We have $D(\mathfrak{M}') \leq D(\mathfrak{M}) \cdot p^{-\mathrm{max}\{r_{F}-1,1\}}$. By the Chebotarev density theorem, we have $D(\mathfrak{M}) = 1/[\mathrm{Gov}(F)F_{1}:F]$. The corollary thus follows from the inequality \begin{equation*}
[\mathrm{Gov}(F)F_{1}:F] = [F(\zeta_{p}) : F] \cdot [\mathrm{Gov}(F)F_{1} : F(\zeta_{p})] \geq [F(\zeta_{p}):F] \cdot p^{r_{F}+\delta_{F}+1}.
\end{equation*}
\end{proof}

\noindent \textbf{Acknowledgements}
\vskip 10pt
We would like to thank Christian Maire for his many helpful comments and for reading the manuscript carefully. We also would like to thank Zakariae Bouazzaoui, Oussama Hamza, Youness Mazigh, and Ali Mouhib for helpful discussions. Lastly, we would like to thank the anonymous reviewer for many helpful comments which improved the manuscript.

The authors were supported by the Core Research Institute Basic Science Research Program through the National Research Foundation of Korea(NRF) funded by the Ministry of Education(Grant No. 2019R1A6A1A11051177).
Y. Lee was also supported by the National Research Foundation of Korea(NRF) grant funded by the Korea government (MSIT)(NRF- 2022R1A2C1003203). D. Lim was also supported by the Basic Science Research Program through the National Research Foundation of Korea (NRF) funded by the Ministry of Education (Grant No. NRF-2022R1I1A1A01071431).

\bibliographystyle{elsarticle-harv} 
\bibliography{finitude.bib}

\end{document}